\newtheorem{Theorem}{Theorem}[section]
\newtheorem{Proposition}[Theorem]{Proposition}
\newtheorem{Lemma}[Theorem]{Lemma}
\newtheorem{Claim}[Theorem]{Claim}
\newtheorem{Corollary}[Theorem]{Corollary}
\newtheorem{Main Conjecture}[Theorem]{Main Conjecture}
\newtheorem{Definition}[Theorem]{Definition}
\theoremstyle{remark}
\newtheorem{Example}[Theorem]{Example}
\newcommand\olambda{{\overline\lambda}}
\newcommand\omu{{\overline\mu}}
\newcommand\oalpha{{\overline\alpha}}
\newcommand\okappa{{\overline\kappa}}
\newcommand{\excise}[1]{}
\theoremstyle{plain}
\begin{document}
\pagestyle{plain}
\title{Root-theoretic Young diagrams and Schubert calculus II}
\author{Dominic Searles}
\address{Department of Mathematics\\
University of Illinois at Urbana-Champaign\\
Urbana, IL 61801}
\email{searles2@uiuc.edu}
\subjclass[2000]{14M15, 14N15}
\keywords{Belkale-Kumar product, isotropic Grassmannians, Schubert calculus, adjoint varieties}

\date{November 12, 2013.}

\begin{abstract}
We continue the study of root-theoretic Young diagrams (RYDs) from \cite{Searles.Yong}. We provide an RYD formula for the $GL_n$ Belkale-Kumar product, after \cite{Knutson.Purbhoo}, and we give a translation of the indexing set of \cite{BKT:Inventiones} for Schubert varieties of non-maximal isotropic Grassmannians into RYDs. We then use this translation to prove that the RYD formulas of \cite{Searles.Yong} for Schubert calculus of the classical (co)adjoint varieties agree with the Pieri rules of \cite{BKT:Inventiones}, which were needed in the proofs of the (co)adjoint formulas.
\end{abstract}

\maketitle

\section{Introduction}

\subsection{Overview}

In \cite{Searles.Yong}, A. Yong and the author study {\it root-theoretic Young diagrams} (RYDs), which are one of several natural choices of indexing set for the Schubert subvarieties of generalized flag varieties. The thesis of that paper and the present one is that RYDs are useful for studying general patterns in Schubert combinatorics in a uniform manner. The main evidence introduced in \cite{Searles.Yong} is rules for Schubert calculus of the classical (co)adjoint varieties in terms of RYDs, and a relation between planarity of the root poset for a (co)adjoint variety and polytopalness of the nonzero Schubert structure constants for its cohomology ring. 

The problem of finding a nonnegative, integral combinatorial rule for the Schubert structure constants of the cohomology ring of a generalized flag variety is longstanding. Much progress has been made on this problem, see, e.g., the survey \cite{Coskun.Vakil}. One of the more recent areas of progress is in the study of the {\it Belkale-Kumar product}, introduced by P.~Belkale and S.~Kumar in \cite{Belkale.Kumar}. The structure constants of the Belkale-Kumar product in the case of $GL_n$ are described by a beautiful formula of A.~Knutson-K.~Purbhoo \cite{Knutson.Purbhoo} in terms of puzzles. In this paper, we use a factorization formula of \cite{Knutson.Purbhoo} to derive a new formula in terms of RYDs for the Belkale-Kumar product. 

We find that the RYD formula manifests in a simple way the product/factorization structure of the Belkale-Kumar coefficients in terms of Schubert structure constants of Grassmannians. In particular, RYDs allow us to visually reduce computation of these coefficients to a collection of independent calculations using the {\it jeu de taquin} algorithm of M.-P.~Sch\"utzenberger \cite{Schutzenberger}. The RYD description also provides a concrete context to explain in what sense the Belkale-Kumar product is ``easier'' than the cup product. Specifically, the RYDs naturally consist of a number of regions. In the rule for the Belkale-Kumar coefficients there is no interaction between these regions and they can be treated independently of each other. This is not true for the Schubert structure constants, e.g., Example~\ref{ex:Schubertregions} exhibits concretely how the Belkale-Kumar case differs from the general problem. 

We would like to study, compare and understand disparate models and problems in Schubert calculus through the common lens of RYDs. Towards this end, we consider also the family of non-maximal {\it isotropic Grassmannians}. 
A.~Buch-A.~Kresch-H.~Tamvakis \cite{BKT:Inventiones} define an indexing set for the Schubert varieties of non-maximal isotropic Grassmannians, and use this indexing set to give particularly nice Pieri rules for the Schubert calculus of these spaces.
The Schubert calculus formulas of \cite{Searles.Yong} for the (co)adjoint varieties of classical Lie type were discovered using the RYD model to index Schubert varieties. The proof of these formulas we requires Pieri rules for these (co)adjoint varieties, the most interesting of which belong to the family of non-maximal isotropic Grassmannians.
Therefore, we provide a reformulation of the indexing set of \cite{BKT:Inventiones} in terms of RYDs.

For the classical (co)adjoint varieties, we use this reformulation to prove that the restriction to the Pieri cases of the formulas of \cite[Theorem 4.1]{Searles.Yong} and \cite[Theorem 5.3]{Searles.Yong} agrees with the Pieri rule of \cite{BKT:Inventiones}. 
In tandem with the proofs of associativity of these (co)adjoint formulas given in \cite{Searles.Yong}, this completes the proofs of these (co)adjoint formulas.
\subsection{The Belkale-Kumar product for $GL_n/P$}

The Belkale-Kumar product is a certain deformation of the usual cup product for $H^\star(G/P)$. Our first result is an RYD formula for this product in the case where $G=GL_n$, after \cite{Knutson.Purbhoo}. RYDs are in fact defined for any generalized flag variety $G/P$, where $G$ is a complex reductive Lie group and $P$ is a parabolic subgroup of $G$; see \cite{Searles.Yong} for further details. In this section, for brevity, we set $G=GL_n$. 

Fix a set ${\tt k}=\{k_1,\ldots,k_{d-1}\}$ of integers satisfying $0<k_1<\ldots <k_{d-1}<n$. 
Let $F_{{\tt k}}:=Fl_{k_1,\ldots , k_{d-1};\mathbb{C}^n}$ denote the $(d-1)$-step {\bf flag variety} in $\mathbb{C}^n$, where the $d-1$ nested subspaces of $\mathbb{C}^n$ have dimensions $k_1,\ldots,k_{d-1}$.
The {\bf Schubert varieties} of $F_{{\tt k}}$ are indexed by the set $S_n^{{\tt k}}$ which consists of the elements of the symmetric group $S_n$ that have descents only in positions $k_1,\ldots,k_{d-1}$. In the case of $F_{{\tt k}}$, the RYDs of \cite{Searles.Yong} are the inversion sets of the elements of $S_n^{{\tt k}}$ in the poset $\Omega_{GL_n}$ of positive roots of $GL_n$. Let $\mathbb{Y}_{{\tt k}}$ denote the set of RYDs for $F_{{\tt k}}$.

Let $I_i$ denote the interval $[k_{i-1}+1,k_i]$ for $1\le i\le d$, where we set $k_0=0$ and $k_d=n$. Let $(a,b)\in \Omega_{GL_n}$ index the root $e_a - e_b$ under the standard embedding of the type $A_{n-1}$ root system into $\mathbb{R}^n$. For each pair $i,j$ with $1\le i < j\le d$, we define an associated {\bf region} $\Lambda_{{\tt k}}^{ij}:=I_i\times I_j$ of $\Omega_{GL_n}$. We will show in the following section (Claim~\ref{claim:permtodiagram}) that each RYD $\lambda\in\mathbb{Y}_{{\tt k}}$ consists of a lower order ideal in each of these $d\choose 2$ regions. 
		
\begin{Example}\label{ex:RYD}
Let $n=7$ and ${\tt k} = \{1,3,5\}$. Then $F_{{\tt k}}=Fl_{1,3,5;\mathbb{C}^7}$, and we have (in one-line notation) $5371624$, $3462715\in S_7^{{\tt k}}$. Below, their RYDs are shown as a subset (colored black) of the poset $\Omega_{GL_7}$. The thicker black lines show the regions $\Lambda_{{\tt k}}^{ij}$.

\[\begin{picture}(400,80)
\multiput(4.5,17.5)(13,13){5}{\line(1,1){10}}
\multiput(0,13)(13,13){6}{$\circ$}
\multiput(26,13)(13,13){5}{$\circ$}
\multiput(52,13)(13,13){4}{$\bullet$}
\multiput(78,13)(13,13){3}{$\circ$}
\multiput(104,13)(13,13){2}{$\bullet$}
\multiput(130,13)(13,13){1}{$\circ$}
\multiput(30.5,17.5)(13,13){4}{\line(1,1){10}}
\multiput(56.5,17.5)(13,13){3}{\line(1,1){10}}
\multiput(82.5,17.5)(13,13){2}{\line(1,1){10}}
\multiput(108.5,17.5)(13,13){1}{\line(1,1){10}}
\multiput(17,27)(13,13){5}{\line(1,-1){10}}
\multiput(43,27)(13,13){4}{\line(1,-1){10}}
\multiput(69,27)(13,13){3}{\line(1,-1){10}}
\multiput(95,27)(13,13){2}{\line(1,-1){10}}
\multiput(121,27)(13,13){1}{\line(1,-1){10}}

\put(0,13){$\bullet$}
\put(26,39){$\bullet$}
\multiput(52,65)(13,13){2}{$\bullet$}
\multiput(39,26)(26,26){2}{$\bullet$}

\thicklines
\put(3,4){\line(-1,1){13}}
\put(3,4){\line(1,1){78}}

\put(55,4){\line(-1,1){39}}
\put(55,4){\line(1,1){52}}

\put(107,4){\line(-1,1){65}}
\put(107,4){\line(1,1){26}}

\thinlines

\put(-4,32){{\tiny $\Lambda^{12}_{{\tt k}}$}}
\put(22,58){{\tiny $\Lambda^{13}_{{\tt k}}$}}
\put(48,84){{\tiny $\Lambda^{14}_{{\tt k}}$}}
\put(90,68){{\tiny $\Lambda^{24}_{{\tt k}}$}}
\put(116,42){{\tiny $\Lambda^{34}_{{\tt k}}$}}
\put(48,26){{\tiny $\Lambda^{23}_{{\tt k}}$}}


\multiput(264.5,17.5)(13,13){5}{\line(1,1){10}}
\multiput(260,13)(13,13){6}{$\circ$}
\multiput(286,13)(13,13){5}{$\circ$}
\multiput(312,13)(13,13){4}{$\circ$}
\multiput(338,13)(13,13){3}{$\circ$}
\multiput(364,13)(13,13){2}{$\circ$}
\multiput(390,13)(13,13){1}{$\circ$}
\multiput(290.5,17.5)(13,13){4}{\line(1,1){10}}
\multiput(316.5,17.5)(13,13){3}{\line(1,1){10}}
\multiput(342.5,17.5)(13,13){2}{\line(1,1){10}}
\multiput(368.5,17.5)(13,13){1}{\line(1,1){10}}
\multiput(277,27)(13,13){5}{\line(1,-1){10}}
\multiput(303,27)(13,13){4}{\line(1,-1){10}}
\multiput(329,27)(13,13){3}{\line(1,-1){10}}
\multiput(355,27)(13,13){2}{\line(1,-1){10}}
\multiput(381,27)(13,13){1}{\line(1,-1){10}}

\multiput(286,39)(26,26){2}{$\bullet$}
\multiput(299,26)(26,26){2}{$\bullet$}
\multiput(312,13)(26,26){2}{$\bullet$}
\put(351,52){$\bullet$}
\put(351,26){$\bullet$}
\multiput(364,13)(13,13){2}{$\bullet$}

\thicklines
\put(263,4){\line(-1,1){13}}
\put(263,4){\line(1,1){78}}

\put(315,4){\line(-1,1){39}}
\put(315,4){\line(1,1){52}}

\put(367,4){\line(-1,1){65}}
\put(367,4){\line(1,1){26}}

\thinlines

\put(256,32){{\tiny $\Lambda^{12}_{{\tt k}}$}}
\put(282,58){{\tiny $\Lambda^{13}_{{\tt k}}$}}
\put(308,84){{\tiny $\Lambda^{14}_{{\tt k}}$}}
\put(350,68){{\tiny $\Lambda^{24}_{{\tt k}}$}}
\put(376,42){{\tiny $\Lambda^{34}_{{\tt k}}$}}
\put(308,26){{\tiny $\Lambda^{23}_{{\tt k}}$}}

\end{picture}
\]
\end{Example}

Let $C_{\lambda,\mu}^{\nu}(F_{{\tt k}})$ denote the {\bf Schubert structure constants} for the cohomology ring $H^\star(F_{{\tt k}})$, i.e.,
\[\sigma_{\lambda}\cdot\sigma_{\mu}=\sum_{\nu} C_{\lambda,\mu}^{\nu}(F_{{\tt k}})\sigma_{\nu}.\]

Let $\lambda_{ij}$ denote the restriction of $\lambda$ to the region $\Lambda_{{\tt k}}^{ij}$. Define a triple $(\lambda,\mu,\nu)\in (\mathbb{Y}_{{\tt k}})^3$ to be {\bf Levi-movable} if $C_{\lambda,\mu}^{\nu}(F_{{\tt k}})\neq 0$ and $|\lambda_{ij}|+|\mu_{ij}|=|\nu_{ij}|$ for all regions $\Lambda_{{\tt k}}^{ij}$. This is essentially identical to the inversion set definition of Levi-movability in the $GL_n$ case from \cite{Knutson.Purbhoo}. It follows from Theorem~\ref{Thm:BKRYD} below that, for $GL_n$, our definition is equivalent to the geometric definition of Levi-movability of \cite{Belkale.Kumar}. Define
\[b_{\lambda,\mu}^{\nu}(F_{{\tt k}})=\begin{cases}
C_{\lambda,\mu}^{\nu}(F_{{\tt k}}) & \text{if $(\lambda,\mu,\nu)$ is Levi-movable}\\
0 & \text{otherwise.} 
\end{cases}\]

Then the {\bf Belkale-Kumar product} $\odot_0$ on $H^\star(F_{{\tt k}})$ is defined by
\[\sigma_{\lambda}\odot_0\sigma_{\mu}=\sum_{\nu} b_{\lambda,\mu}^{\nu}(F_{{\tt k}})\sigma_{\nu}.\]

For further details, see \cite{Belkale.Kumar}. We also learned much of the background from \cite{Richmond}.

Our formula uses the {\bf jeu de taquin} introduced in \cite{Schutzenberger}. The following setup in terms of root posets is similar to that employed in \cite{Thomas.Yong:comin}. Given a subset $S$ of $\Lambda_{{\tt k}}^{ij}$, define a partial labelling $T_S$ of $\Lambda_{{\tt k}}^{ij}$ by bijectively assigning each root in $S$ a number from $\{1,\ldots,|S|\}$, subject to the condition that a root $\alpha$ receives a smaller number than a root $\alpha'$ whenever $\alpha\prec\alpha'$. Roots in $\Lambda_{{\tt k}}^{ij}$ that have no label will be called unlabelled. Let $\lambda,\mu,\nu\in \mathbb{Y}_{{\tt k}}$. Let $\nu/\lambda$ denote the set-theoretic difference of $\nu$ and $\lambda$, and call $\nu/\lambda$ a {\bf skew} RYD.  

Starting with a given labelling $T_{\nu_{ij}/\lambda_{ij}}$, choose an unlabelled root $\alpha$ of $\Lambda_{{\tt k}}^{ij}$ which is maximal subject to the condition that some labelled root is above it. Among the labelled roots covering $\alpha$, choose the root $\alpha'$ having the smallest label. Move its label to $\alpha$, leaving $\alpha'$ unlabelled. Then find the labelled root covering $\alpha'$ with smallest label, and move its label to $\alpha'$. Continue in this manner until a label is moved from a root that has no labelled root above it. Then, choose an unlabelled root of $\Lambda_{{\tt k}}^{ij}$, maximal such that some labelled root is above it and perform the same process. Repeat until there is no unlabelled root below a labelled root. Let ${\tt jdt}(T_{\nu_{ij}/\lambda_{ij}})$ denote the resulting partial labelling of $\Lambda_{{\tt k}}^{ij}$. 

Fix a choice of labelling $T_{\mu_{ij}}$. Let $e_{\lambda_{ij},\mu_{ij}}^{\nu_{ij}}$ denote the number of labellings $T_{\nu_{ij}/\lambda_{ij}}$ such that ${\tt jdt}(T_{\nu_{ij}/\lambda_{ij}})=T_{\mu_{ij}}$. 
Then the Belkale-Kumar coefficient $b_{\lambda,\mu}^\nu(F_{{\tt k}})$ is computed by taking the skew RYD $\nu/\lambda$, performing the jeu de taquin algorithm independently on each region of $\Omega_{GL_n}$, and multiplying the resulting numbers $e_{\lambda_{ij},\mu_{ij}}^{\nu_{ij}}$. In other words:

\begin{Theorem}\label{Thm:BKRYD}
\[b_{\lambda,\mu}^{\nu}(F_{{\tt k}})=\prod_{{\rm regions \ } \Lambda_{{\tt k}}^{ij}}e_{\lambda_{ij},\mu_{ij}}^{\nu_{ij}}.\]
\end{Theorem}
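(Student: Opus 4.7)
The plan is to combine the Knutson-Purbhoo factorization of $GL_n$ Belkale-Kumar coefficients into Grassmannian Littlewood-Richardson numbers with Sch\"utzenberger's jeu de taquin rule, interpreted through the region decomposition of the positive root poset $\Omega_{GL_n}$.

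The starting point is the factorization proved in \cite{Knutson.Purbhoo}: for $\lambda,\mu,\nu\in S_n^{\tt k}$,
\[b^{\nu}_{\lambda,\mu}(F_{\tt k}) \;=\; \prod_{1\le i<j\le d} c^{\,\nu^{(ij)}}_{\lambda^{(ij)},\,\mu^{(ij)}},\]
where each factor $c^{\nu^{(ij)}}_{\lambda^{(ij)},\mu^{(ij)}}$ is a Littlewood-Richardson coefficient for the Grassmannian $\mathrm{Gr}(|I_i|,|I_i|+|I_j|)$, and $w^{(ij)}$ denotes the Grassmannian permutation obtained from $w\in S_n^{\tt k}$ by restricting its one-line notation to positions in $I_i\cup I_j$ and relabelling. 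The Levi-movability condition from the excerpt (the equalities $|\lambda_{ij}|+|\mu_{ij}|=|\nu_{ij}|$) is exactly what forces the nonvanishing factors to sum to the correct codimension, matching the Belkale-Kumar specialization.

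Next, I would identify each region $\Lambda^{ij}_{\tt k} = I_i \times I_j$ of $\Omega_{GL_n}$ with the rectangular positive root poset of $\mathrm{Gr}(|I_i|,|I_i|+|I_j|)$. By Claim~\ref{claim:permtodiagram}, $\lambda_{ij}$ is a lower order ideal in this rectangle, and hence represents a partition fitting in an $|I_i|\times|I_j|$ box. Unwinding the RYD definition via inversion sets, a pair $(a,b)\in I_i\times I_j$ lies in $\lambda_{ij}$ precisely when $\lambda(a)>\lambda(b)$, and this is the same set of $(I_i,I_j)$-inversions from which $\lambda^{(ij)}$ is built in \cite{Knutson.Purbhoo}. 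Thus, under the poset identification, $\lambda_{ij}=\lambda^{(ij)}$ as partitions, and similarly for $\mu$ and $\nu$.

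Finally, under the same identification the jeu de taquin procedure defined in the excerpt on partial labellings of $\Lambda^{ij}_{\tt k}$ is exactly classical jeu de taquin on standard skew Young tableaux of shape $\nu_{ij}/\lambda_{ij}$: the cover relations in a rectangular root poset correspond to horizontal/vertical adjacency of boxes, and the ``maximal unlabelled root with a labelled root above'' rule is the usual inner-corner slide. Sch\"utzenberger's theorem \cite{Schutzenberger} therefore gives
\[e^{\nu_{ij}}_{\lambda_{ij},\,\mu_{ij}} \;=\; c^{\,\nu^{(ij)}}_{\lambda^{(ij)},\,\mu^{(ij)}},\]
and taking the product over all $\binom{d}{2}$ regions yields the stated formula. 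The main obstacle I anticipate is the bookkeeping in the middle step: confirming that the conventions in \cite{Knutson.Purbhoo} for reading off $\lambda^{(ij)}$ (rows versus columns, orientation of the rectangle, and the convention for which corner of $\Lambda^{ij}_{\tt k}$ corresponds to the empty partition) line up with those built into the RYD/inversion-set description of $\lambda_{ij}$. Once this dictionary is pinned down, the theorem follows by composing Sch\"utzenberger's rule with the factorization of \cite{Knutson.Purbhoo}.
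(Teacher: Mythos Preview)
Your proposal is correct and follows essentially the same route as the paper: invoke the Knutson--Purbhoo factorization into Grassmannian Littlewood--Richardson numbers, identify each region $\Lambda^{ij}_{\tt k}$ with the root poset of $\mathrm{Gr}(|I_i|,|I_i|+|I_j|)$ so that $\lambda_{ij}$ matches the restricted Grassmannian permutation, and then quote the jeu de taquin rule. The paper handles the bookkeeping you flag by passing through the word indexing set $G_n^{\tt k}$ and a short lemma (Lemma~\ref{lemma:Dijagreement}) showing that deletion commutes with the bijection $f:G_n^{\tt k}\to S_n^{\tt k}$, but this is just a packaging of the same dictionary you describe.
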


\begin{Example}
Let $n=7$ and ${\tt k}=\{3,6\}$. Then $F_{{\tt k}}=Fl_{3,6; \mathbb{C}^7}$, and $1362475$, $1462573$, $3572461\in S_7^{{\tt k}}$. Let (respectively) $\lambda,\mu,\nu$ be the corresponding RYDs. Below is a choice of labellings $\{T_{\mu_{ij}}\}$ of the RYD $\mu$, and the two labellings $\{T_{\nu_{ij}/\lambda_{ij}}\}$ of the skew RYD $\nu/\lambda$ such that ${\tt jdt}(T_{\nu_{ij}/\lambda_{ij}})=T_{\mu_{ij}}$ in each region $\Lambda_{{\tt k}}^{ij}$. 

\[\begin{picture}(500,100)

\multiput(0,13)(13,13){6}{$\circ$}
\multiput(26,13)(13,13){5}{$\circ$}
\multiput(52,13)(13,13){4}{$\circ$}
\multiput(78,13)(13,13){3}{$\circ$}
\multiput(104,13)(13,13){2}{$\circ$}
\multiput(130,13)(13,13){1}{$\circ$}
\multiput(4.5,17.5)(13,13){5}{\line(1,1){10}}
\multiput(30.5,17.5)(13,13){4}{\line(1,1){10}}
\multiput(56.5,17.5)(13,13){3}{\line(1,1){10}}
\multiput(82.5,17.5)(13,13){2}{\line(1,1){10}}
\multiput(108.5,17.5)(13,13){1}{\line(1,1){10}}
\multiput(17,27)(13,13){5}{\line(1,-1){10}}
\multiput(43,27)(13,13){4}{\line(1,-1){10}}
\multiput(69,27)(13,13){3}{\line(1,-1){10}}
\multiput(95,27)(13,13){2}{\line(1,-1){10}}
\multiput(121,27)(13,13){1}{\line(1,-1){10}}

\put(39,26){$\bullet$}
\put(52,13){$\bullet$}
\put(65,26){$\bullet$}
\put(91,52){$\bullet$}
\put(130,13){$\bullet$}
\put(117,26){$\bullet$}
\put(78,65){$\bullet$}

\put(40,32.5){{\tiny $2$}}
\put(53,19.5){{\tiny $1$}}
\put(66,32.5){{\tiny $3$}}
\put(92,58.5){{\tiny $1$}}
\put(131,19.5){{\tiny $1$}}
\put(118,32.5){{\tiny $2$}}
\put(79,71.5){{\tiny $2$}}

\thicklines

\put(55,4){\line(-1,1){39}}
\put(55,4){\line(1,1){52}}

\put(133,4){\line(-1,1){78}}
\put(133,4){\line(1,1){13}}

\thinlines

\put(17,63){{\tiny $\Lambda^{12}_{{\tt k}}$}}
\put(95,78){{\tiny $\Lambda^{13}_{{\tt k}}$}}
\put(126,47){{\tiny $\Lambda^{23}_{{\tt k}}$}}


\multiput(159,13)(13,13){6}{$\circ$}
\multiput(185,13)(13,13){5}{$\circ$}
\multiput(211,13)(13,13){4}{$\circ$}
\multiput(237,13)(13,13){3}{$\circ$}
\multiput(263,13)(13,13){2}{$\circ$}
\multiput(289,13)(13,13){1}{$\circ$}

\multiput(163.5,17.5)(13,13){5}{\line(1,1){10}}
\multiput(189.5,17.5)(13,13){4}{\line(1,1){10}}
\multiput(215.5,17.5)(13,13){3}{\line(1,1){10}}
\multiput(241.5,17.5)(13,13){2}{\line(1,1){10}}
\multiput(267.5,17.5)(13,13){1}{\line(1,1){10}}
\multiput(176,27)(13,13){5}{\line(1,-1){10}}
\multiput(202,27)(13,13){4}{\line(1,-1){10}}
\multiput(228,27)(13,13){3}{\line(1,-1){10}}
\multiput(254,27)(13,13){2}{\line(1,-1){10}}
\multiput(280,27)(13,13){1}{\line(1,-1){10}}

\thicklines

\put(214,4){\line(-1,1){39}}
\put(214,4){\line(1,1){52}}

\put(292,4){\line(-1,1){78}}
\put(292,4){\line(1,1){13}}

\thinlines

\put(176,63){{\tiny $\Lambda^{12}_{{\tt k}}$}}
\put(254,78){{\tiny $\Lambda^{13}_{{\tt k}}$}}
\put(285,47){{\tiny $\Lambda^{23}_{{\tt k}}$}}

\multiput(185,39)(26,0){4}{$\bullet$}
\put(224,78){$\bullet$}
\put(237,65){$\bullet$}
\put(276,26){$\bullet$}

\put(186,45.5){{\tiny $2$}}
\put(212,45.5){{\tiny $3$}}
\put(238,45.5){{\tiny $1$}}
\put(264,45.5){{\tiny $2$}}

\put(225,84.5){{\tiny $2$}}
\put(238,71.5){{\tiny $1$}}
\put(277,32.5){{\tiny $1$}}


\multiput(318,13)(13,13){6}{$\circ$}
\multiput(344,13)(13,13){5}{$\circ$}
\multiput(370,13)(13,13){4}{$\circ$}
\multiput(396,13)(13,13){3}{$\circ$}
\multiput(422,13)(13,13){2}{$\circ$}
\multiput(448,13)(13,13){1}{$\circ$}

\multiput(322.5,17.5)(13,13){5}{\line(1,1){10}}
\multiput(348.5,17.5)(13,13){4}{\line(1,1){10}}
\multiput(374.5,17.5)(13,13){3}{\line(1,1){10}}
\multiput(400.5,17.5)(13,13){2}{\line(1,1){10}}
\multiput(426.5,17.5)(13,13){1}{\line(1,1){10}}
\multiput(335,27)(13,13){5}{\line(1,-1){10}}
\multiput(361,27)(13,13){4}{\line(1,-1){10}}
\multiput(387,27)(13,13){3}{\line(1,-1){10}}
\multiput(413,27)(13,13){2}{\line(1,-1){10}}
\multiput(439,27)(13,13){1}{\line(1,-1){10}}

\thicklines

\put(373,4){\line(-1,1){39}}
\put(373,4){\line(1,1){52}}

\put(451,4){\line(-1,1){78}}
\put(451,4){\line(1,1){13}}

\thinlines

\put(335,63){{\tiny $\Lambda^{12}_{{\tt k}}$}}
\put(413,78){{\tiny $\Lambda^{13}_{{\tt k}}$}}
\put(444,47){{\tiny $\Lambda^{23}_{{\tt k}}$}}

\multiput(344,39)(26,0){4}{$\bullet$}
\put(383,78){$\bullet$}
\put(396,65){$\bullet$}
\put(435,26){$\bullet$}

\put(345,45.5){{\tiny $2$}}
\put(371,45.5){{\tiny $1$}}
\put(397,45.5){{\tiny $3$}}
\put(423,45.5){{\tiny $2$}}

\put(384,84.5){{\tiny $2$}}
\put(397,71.5){{\tiny $1$}}
\put(436,32.5){{\tiny $1$}}

\end{picture}
\]

The jeu de taquin algorithm yields $e_{\lambda_{12},\mu_{12}}^{\nu_{12}}=2$, $e_{\lambda_{13},\mu_{13}}^{\nu_{13}}=1$, $e_{\lambda_{23},\mu_{23}}^{\nu_{23}}=1$, hence
\[b_{\lambda,\mu}^{\nu}(Fl_{3,6; \mathbb{C}^7}) =  2\cdot 1\cdot 1=2.\]

\end{Example}

In contrast, for general Schubert structure constants not covered by Theorem~\ref{Thm:BKRYD} the regions are not independent. For example, let $n=5$ and ${\tt k} = \{2,4\}$.

\begin{Example}\label{ex:Schubertregions}
$\sigma_{12453}\cdot\sigma_{34125}=\sigma_{35142}+\sigma_{34251}+
\sigma_{45123}\in H^{\star}(Fl_{2,4;\mathbb{C}^5})$. Pictorially:

\[\begin{picture}(400, 110)
\multiput(0,65)(13,13){4}{$\circ$}
\multiput(26,65)(13,13){3}{$\circ$}
\multiput(52,65)(13,13){2}{$\circ$}
\multiput(78,65)(13,13){1}{$\circ$}

\thicklines
\put(28,58){\line(1,1){39}}
\put(28,58){\line(-1,1){26}}
\put(80,58){\line(1,1){13}}
\put(80,58){\line(-1,1){52}}
\thinlines

\multiput(4.5,69.5)(13,13){3}{\line(1,1){10}}
\multiput(30.5,69.5)(13,13){2}{\line(1,1){10}}
\multiput(56.5,69.5)(13,13){1}{\line(1,1){10}}

\multiput(27.5,69.5)(13,13){3}{\line(-1,1){10}}
\multiput(53.5,69.5)(13,13){2}{\line(-1,1){10}}
\multiput(79.5,69.5)(13,13){1}{\line(-1,1){10}}

\put(65,78){$\bullet$}
\put(78,65){$\bullet$}

\put(80,90){$\times$}


\multiput(96,65)(13,13){4}{$\circ$}
\multiput(122,65)(13,13){3}{$\circ$}
\multiput(148,65)(13,13){2}{$\circ$}
\multiput(174,65)(13,13){1}{$\circ$}

\thicklines
\put(124,58){\line(1,1){39}}
\put(124,58){\line(-1,1){26}}
\put(176,58){\line(1,1){13}}
\put(176,58){\line(-1,1){52}}
\thinlines

\multiput(109,78)(13,13){2}{$\bullet$}
\multiput(122,65)(13,13){2}{$\bullet$}

\multiput(100.5,69.5)(13,13){3}{\line(1,1){10}}
\multiput(126.5,69.5)(13,13){2}{\line(1,1){10}}
\multiput(152.5,69.5)(13,13){1}{\line(1,1){10}}

\multiput(123.5,69.5)(13,13){3}{\line(-1,1){10}}
\multiput(149.5,69.5)(13,13){2}{\line(-1,1){10}}
\multiput(175.5,69.5)(13,13){1}{\line(-1,1){10}}


\put(110,30){$=$}

\multiput(122,5)(13,13){4}{$\circ$}
\multiput(148,5)(13,13){3}{$\circ$}
\multiput(174,5)(13,13){2}{$\circ$}
\multiput(200,5)(13,13){1}{$\circ$}

\thicklines
\put(150,-2){\line(1,1){39}}
\put(150,-2){\line(-1,1){26}}
\put(202,-2){\line(1,1){13}}
\put(202,-2){\line(-1,1){52}}
\thinlines

\multiput(135,18)(13,13){1}{$\bullet$}
\put(161,44){$\bullet$}
\multiput(148,5)(13,13){3}{$\bullet$}
\put(200,5){$\bullet$}

\multiput(126.5,9.5)(13,13){3}{\line(1,1){10}}
\multiput(152.5,9.5)(13,13){2}{\line(1,1){10}}
\multiput(178.5,9.5)(13,13){1}{\line(1,1){10}}

\multiput(149.5,9.5)(13,13){3}{\line(-1,1){10}}
\multiput(175.5,9.5)(13,13){2}{\line(-1,1){10}}
\multiput(201.5,9.5)(13,13){1}{\line(-1,1){10}}


\put(205,30){$+$}

\multiput(218,5)(13,13){4}{$\circ$}
\multiput(244,5)(13,13){3}{$\circ$}
\multiput(270,5)(13,13){2}{$\circ$}
\multiput(296,5)(13,13){1}{$\circ$}

\thicklines
\put(246,-2){\line(1,1){39}}
\put(246,-2){\line(-1,1){26}}
\put(298,-2){\line(1,1){13}}
\put(298,-2){\line(-1,1){52}}
\thinlines

\multiput(231,18)(13,13){1}{$\bullet$}
\put(257,44){$\bullet$}
\multiput(244,5)(13,13){1}{$\bullet$}
\put(270,31){$\bullet$}
\put(283,18){$\bullet$}
\put(296,5){$\bullet$}

\multiput(222.5,9.5)(13,13){3}{\line(1,1){10}}
\multiput(248.5,9.5)(13,13){2}{\line(1,1){10}}
\multiput(274.5,9.5)(13,13){1}{\line(1,1){10}}

\multiput(245.5,9.5)(13,13){3}{\line(-1,1){10}}
\multiput(271.5,9.5)(13,13){2}{\line(-1,1){10}}
\multiput(297.5,9.5)(13,13){1}{\line(-1,1){10}}


\put(302,30){$+$}

\multiput(327,18)(13,13){3}{$\bullet$}
\multiput(314,5)(13,13){1}{$\circ$}
\multiput(340,5)(13,13){3}{$\bullet$}
\multiput(366,5)(13,13){2}{$\circ$}
\multiput(392,5)(13,13){1}{$\circ$}

\thicklines
\put(342,-2){\line(1,1){39}}
\put(342,-2){\line(-1,1){26}}
\put(394,-2){\line(1,1){13}}
\put(394,-2){\line(-1,1){52}}
\thinlines

\multiput(318.5,9.5)(13,13){3}{\line(1,1){10}}
\multiput(344.5,9.5)(13,13){2}{\line(1,1){10}}
\multiput(370.5,9.5)(13,13){1}{\line(1,1){10}}

\multiput(341.5,9.5)(13,13){3}{\line(-1,1){10}}
\multiput(367.5,9.5)(13,13){2}{\line(-1,1){10}}
\multiput(393.5,9.5)(13,13){1}{\line(-1,1){10}}

\end{picture}
\]

\end{Example}

The RYDs for $12453$ and $34125$ use no roots from $\Lambda_{{\tt k}}^{13}$, but the RYDs for $35142, 34251$ and $45123$ all use roots from this region. In particular, by Theorem~\ref{Thm:BKRYD} this immediately implies $\sigma_{12453}\odot_0\sigma_{34125}=0$.

\begin{Example}\label{example:KnPu11}
For purposes of comparison, we compute the example of \cite[Figure 2]{Knutson.Purbhoo} in terms of RYDs. Let $n=5$ and ${\tt k} = \{2,4\}$. Let $23112$, $12132$, $32121 \in G^{{\tt k}}_5$. Their images under $f$ are respectively $34152$, $13254$, $35241 \in S^{{\tt k}}_5$. Let respectively $\lambda$, $\mu$, $\nu \in \mathbb{Y}_{{\tt k}}$ be the corresponding RYDs. Below is the only possible set of labellings $\{T_{\mu_{ij}}\}$ of the RYD $\mu$, and the only possible set of labellings $\{T_{\nu_{ij}/\lambda_{ij}}\}$ of the skew RYD $\nu/\lambda$. 

\[\begin{picture}(240,60)

\multiput(0,13)(13,13){4}{$\circ$}
\multiput(26,13)(13,13){3}{$\circ$}
\multiput(52,13)(13,13){2}{$\circ$}
\multiput(78,13)(13,13){1}{$\circ$}

\put(26,13){$\bullet$}
\put(78,13){$\bullet$}

\put(27,19.5){{\tiny $1$}}
\put(79,19.5){{\tiny $1$}}

\thicklines
\put(28,4){\line(1,1){39}}
\put(28,4){\line(-1,1){26}}
\put(80,4){\line(1,1){13}}
\put(80,4){\line(-1,1){52}}
\thinlines

\multiput(4.5,17.5)(13,13){3}{\line(1,1){10}}
\multiput(30.5,17.5)(13,13){2}{\line(1,1){10}}
\multiput(56.5,17.5)(13,13){1}{\line(1,1){10}}

\multiput(27.5,17.5)(13,13){3}{\line(-1,1){10}}
\multiput(53.5,17.5)(13,13){2}{\line(-1,1){10}}
\multiput(79.5,17.5)(13,13){1}{\line(-1,1){10}}


\multiput(150,13)(13,13){4}{$\circ$}
\multiput(176,13)(13,13){3}{$\circ$}
\multiput(202,13)(13,13){2}{$\circ$}
\multiput(228,13)(13,13){1}{$\circ$}

\put(189,26){$\bullet$}
\put(215,26){$\bullet$}

\put(190,32.5){{\tiny $1$}}
\put(216,32.5){{\tiny $1$}}

\thicklines
\put(178,4){\line(1,1){39}}
\put(178,4){\line(-1,1){26}}
\put(230,4){\line(1,1){13}}
\put(230,4){\line(-1,1){52}}
\thinlines

\multiput(154.5,17.5)(13,13){3}{\line(1,1){10}}
\multiput(180.5,17.5)(13,13){2}{\line(1,1){10}}
\multiput(206.5,17.5)(13,13){1}{\line(1,1){10}}

\multiput(177.5,17.5)(13,13){3}{\line(-1,1){10}}
\multiput(203.5,17.5)(13,13){2}{\line(-1,1){10}}
\multiput(229.5,17.5)(13,13){1}{\line(-1,1){10}}

\end{picture}\]

Since ${\tt jdt}(T_{\nu_{ij}/\lambda_{ij}})=T_{\mu_{ij}}$ in each region $\Lambda_{{\tt k}}^{ij}$, we have $b_{\lambda,\mu}^\nu(Fl_{2,4; \mathbb{C}^5})=1$.

\end{Example}

The Belkale-Kumar product has recently been utilized to obtain results concerning the structure of the Littlewood-Richardson cone and generalizations thereof (\cite{RessayreGIT}, \cite{RessayreGIT2}). Fulton's conjecture, proved in \cite{Knutson.Tao.Woodward} (also geometrically in \cite{Belkale} and \cite{RessayreFulton}) has also been generalized by \cite{Belkale.Kumar.Ressayre} using the Belkale-Kumar product. Further applications include the Horn problem (\cite{Richmond}, \cite{Belkale.Kumar10}, \cite{RessayreEigencone}), and branching Schubert calculus (\cite{Ressayre.Richmond}).

\subsection{Nonmaximal isotropic Grassmannians}
Fix a positive integer $k<n$. A (nonmaximal) isotropic Grassmannian is the set of $k$-dimensional isotropic subspaces of a vector space with a non-degenerate symmetric or skew-symmetric bilinear form. 
Specifically, they are the odd orthogonal Grassmannian $OG(k,2n+1)$, the Lagrangian Grassmannian $LG(k,2n)$, and the even orthogonal Grassmannian $OG(k,2n)$.

The Schubert varieties of $OG(k,2n+1)$ and $LG(k,2n)$ are both indexed by a set denoted $W^{OG(k,2n+1)}$, and the Schubert varieties of $OG(k,2n)$ are indexed by a set $W^{OG(k,2n)}$. The elements of these sets are certain signed permutations corresponding to Weyl group cosets, and are described explicitly in Section~3.
For $OG(k,2n+1)/LG(k,2n)$ (respectively, $OG(k,2n)$), the RYDs of \cite{Searles.Yong} are the inversion sets of the elements of $W^{OG(k,2n+1)}$ (respectively, $W^{OG(k,2n)}$) in the type B root poset $\Omega_{SO_{2n+1}}$ (respectively, type D root poset $\Omega_{SO_{2n}}$). Denote the set of RYDs associated to $W^{OG(k,2n+1)}$ (respectively, $W^{OG(k,2n)}$) by  $\mathbb{Y}_{OG(k,2n+1)}$ (respectively, $\mathbb{Y}_{OG(k,2n)}$).

\begin{Example}\label{ex:Bpicture} 
Below are two RYDs shown inside $\Omega_{SO_{11}}$. The first is an element of $\mathbb{Y}_{OG(3,11)}$, the second an element of $\mathbb{Y}_{OG(4,11)}$.

\[\begin{picture}(360,130)

\multiput(0,13)(13,13){9}{$\circ$}
\multiput(26,13)(13,13){7}{$\circ$}
\multiput(52,13)(13,13){5}{$\circ$}
\multiput(78,13)(13,13){3}{$\circ$}
\multiput(104,13)(13,13){1}{$\circ$}

\multiput(52,13)(13,13){4}{$\bullet$}
\multiput(39,26)(13,13){1}{$\bullet$}
\multiput(26,39)(13,13){1}{$\bullet$}
\put(91,104){$\bullet$}
\put(104,91){$\bullet$}

\multiput(4.5,17.5)(13,13){8}{\line(1,1){10}}
\multiput(30.5,17.5)(13,13){6}{\line(1,1){10}}
\multiput(56.5,17.5)(13,13){4}{\line(1,1){10}}
\multiput(82.5,17.5)(13,13){2}{\line(1,1){10}}

\multiput(17,27)(13,13){7}{\line(1,-1){10}}
\multiput(43,27)(13,13){5}{\line(1,-1){10}}
\multiput(69,27)(13,13){3}{\line(1,-1){10}}
\multiput(95,27)(13,13){1}{\line(1,-1){10}}

\thicklines
\put(55,4){\line(-1,1){39}}
\put(55,4){\line(1,1){78}}
\put(134,56){\line(-1,1){52}}
\thinlines


\multiput(230,13)(13,13){9}{$\circ$}
\multiput(256,13)(13,13){7}{$\circ$}
\multiput(282,13)(13,13){5}{$\circ$}
\multiput(308,13)(13,13){3}{$\circ$}
\multiput(334,13)(13,13){1}{$\circ$}

\multiput(308,13)(13,13){3}{$\bullet$}
\multiput(295,26)(13,13){2}{$\bullet$}
\multiput(282,39)(13,13){1}{$\bullet$}

\put(334,65){$\bullet$}
\multiput(321,78)(13,13){2}{$\bullet$}

\multiput(234.5,17.5)(13,13){8}{\line(1,1){10}}
\multiput(260.5,17.5)(13,13){6}{\line(1,1){10}}
\multiput(286.5,17.5)(13,13){4}{\line(1,1){10}}
\multiput(312.5,17.5)(13,13){2}{\line(1,1){10}}

\multiput(247,27)(13,13){7}{\line(1,-1){10}}
\multiput(273,27)(13,13){5}{\line(1,-1){10}}
\multiput(299,27)(13,13){3}{\line(1,-1){10}}
\multiput(325,27)(13,13){1}{\line(1,-1){10}}

\thicklines
\put(311,4){\line(-1,1){52}}
\put(311,4){\line(1,1){52}}
\put(364,30){\line(-1,1){65}}
\thinlines

\end{picture}\]
\end{Example}

\begin{Example}\label{ex:Dpicture} 
Below are two RYDs shown inside $\Omega_{SO_{12}}$. The first is an element of $\mathbb{Y}_{OG(3,12)}$, and also shown is a ``double-tailed diamond'' from its base region (see the explanation below). The second is an element of $\mathbb{Y}_{OG(4,12)}$.

\[\begin{picture}(400,130)

\multiput(0,13)(13,13){5}{$\circ$}
\multiput(26,13)(13,13){4}{$\circ$}
\multiput(52,13)(13,13){3}{$\circ$}
\multiput(78,13)(13,13){2}{$\circ$}
\multiput(104,13)(13,13){1}{$\circ$}

\multiput(4.5,17.5)(13,13){4}{\line(1,1){10}}
\multiput(30.5,17.5)(13,13){3}{\line(1,1){10}}
\multiput(56.5,17.5)(13,13){2}{\line(1,1){10}}
\multiput(82.5,17.5)(13,13){1}{\line(1,1){10}}

\multiput(17,27)(13,13){4}{\line(1,-1){10}}
\multiput(43,27)(13,13){3}{\line(1,-1){10}}
\multiput(69,27)(13,13){2}{\line(1,-1){10}}
\multiput(95,27)(13,13){1}{\line(1,-1){10}}

\multiput(52,13)(13,13){3}{$\bullet$}
\multiput(39,26)(13,13){2}{$\bullet$}
\multiput(26,39)(13,13){2}{$\bullet$}
\put(78,104){$\bullet$}
\put(91,91){$\bullet$}
\put(91,117){$\bullet$}

\multiput(39,65)(13,13){5}{$\circ$}
\multiput(52,52)(13,13){4}{$\circ$}
\multiput(65,39)(13,13){3}{$\circ$}
\multiput(78,26)(13,13){2}{$\circ$}
\multiput(91,13)(13,13){1}{$\circ$}

\multiput(43.5,69.5)(13,13){4}{\line(1,1){10}}
\multiput(56.5,56.5)(13,13){3}{\line(1,1){10}}
\multiput(69.5,43.5)(13,13){2}{\line(1,1){10}}
\multiput(82.5,30.5)(13,13){1}{\line(1,1){10}}

\multiput(43,66)(13,13){4}{\line(1,-1){10}}
\multiput(56,53)(13,13){3}{\line(1,-1){10}}
\multiput(69,40)(13,13){2}{\line(1,-1){10}}
\multiput(82,27)(13,13){1}{\line(1,-1){10}}

\put(39,65){$\bullet$}
\put(52,52){$\bullet$}
\put(65,39){$\bullet$}

\multiput(42,57)(13,13){2}{\line(0,1){9}}
\multiput(55,44)(13,13){2}{\line(0,1){9}}
\multiput(68,31)(13,13){2}{\line(0,1){9}}
\multiput(81,18)(13,13){2}{\line(0,1){9}}

\thicklines
\put(55,8){\line(-1,1){39}}
\put(55,8){\line(1,1){65}}
\put(121,56){\line(-1,1){52}}
\thinlines


\multiput(160,13)(13,13){3}{$\bullet$}
\multiput(173,39)(13,13){3}{$\circ$}
\put(173,39){$\bullet$}

\multiput(164.5,17.5)(13,13){2}{\line(1,1){10}}
\multiput(177.5,43.5)(13,13){2}{\line(1,1){10}}

\put(176,31){\line(0,1){9}}
\put(189,44){\line(0,1){9}}



\multiput(260,13)(13,13){5}{$\circ$}
\multiput(286,13)(13,13){4}{$\circ$}
\multiput(312,13)(13,13){3}{$\circ$}
\multiput(338,13)(13,13){2}{$\circ$}
\multiput(364,13)(13,13){1}{$\circ$}

\multiput(264.5,17.5)(13,13){4}{\line(1,1){10}}
\multiput(290.5,17.5)(13,13){3}{\line(1,1){10}}
\multiput(316.5,17.5)(13,13){2}{\line(1,1){10}}
\multiput(342.5,17.5)(13,13){1}{\line(1,1){10}}

\multiput(277,27)(13,13){4}{\line(1,-1){10}}
\multiput(303,27)(13,13){3}{\line(1,-1){10}}
\multiput(329,27)(13,13){2}{\line(1,-1){10}}
\multiput(355,27)(13,13){1}{\line(1,-1){10}}

\multiput(338,13)(13,13){2}{$\bullet$}
\multiput(325,26)(13,13){2}{$\bullet$}
\multiput(312,39)(13,13){2}{$\bullet$}
\multiput(299,52)(13,13){1}{$\bullet$}

\put(325,91){$\bullet$}
\multiput(338,78)(13,13){2}{$\bullet$}
\put(351,65){$\bullet$}


\multiput(299,65)(13,13){5}{$\circ$}
\multiput(312,52)(13,13){4}{$\circ$}
\multiput(325,39)(13,13){3}{$\circ$}
\multiput(338,26)(13,13){2}{$\circ$}
\multiput(351,13)(13,13){1}{$\circ$}

\multiput(303.5,69.5)(13,13){4}{\line(1,1){10}}
\multiput(316.5,56.5)(13,13){3}{\line(1,1){10}}
\multiput(329.5,43.5)(13,13){2}{\line(1,1){10}}
\multiput(342.5,30.5)(13,13){1}{\line(1,1){10}}

\multiput(303,66)(13,13){4}{\line(1,-1){10}}
\multiput(316,53)(13,13){3}{\line(1,-1){10}}
\multiput(329,40)(13,13){2}{\line(1,-1){10}}
\multiput(342,27)(13,13){1}{\line(1,-1){10}}

\multiput(338,26)(13,13){2}{$\bullet$}
\multiput(325,39)(13,13){1}{$\bullet$}
\multiput(312,52)(13,13){1}{$\bullet$}

\multiput(302,57)(13,13){2}{\line(0,1){9}}
\multiput(315,44)(13,13){2}{\line(0,1){9}}
\multiput(328,31)(13,13){2}{\line(0,1){9}}
\multiput(341,18)(13,13){2}{\line(0,1){9}}

\thicklines
\put(341,8){\line(-1,1){52}}
\put(341,8){\line(1,1){39}}
\put(381,30){\line(-1,1){65}}
\thinlines

\end{picture}\]
\end{Example}

We now explain the diagrams of Examples~\ref{ex:Bpicture} and \ref{ex:Dpicture} above. Let $\{\beta_1,\ldots, \beta_n\}$ denote the roots of the standard embedding of the type $B_n$ (respectively, $D_n$) root system into $\mathbb{R}^n$. Then every RYD in $\mathbb{Y}_{OG(k,2n+1)}$ (respectively, $\mathbb{Y}_{OG(k,2n)}$) is in fact contained in the subposet $\Lambda_k$ of $\Omega_{SO_{2n+1}}$ (respectively, $\Omega_{SO_{2n}}$) consisting of all roots above the $k$th simple root $\beta_k$. 
We divide $\Lambda_{k}$ into a {\bf base region} and a {\bf top region}. In Examples~\ref{ex:Bpicture} and \ref{ex:Dpicture}, the thicker black lines show $\Lambda_{k}$ and its division into these two regions.
In each type, the top region is a ``staircase'' $(k-1,k-2,\ldots,0)$. In types B/C the base region is a $k\times (2n+1-2k)$ ``rectangle'', while in type D the base region consists of $k$ ``double-tailed diamonds'' (following the nomenclature of \cite{Thomas.Yong:comin}) each having $2n-2k$ roots.

It is straightforward to show that every RYD $\lambda$ consists of a lower order ideal in each region. Then an RYD $\lambda$ for a nonmaximal isotropic Grassmannian has a natural visual interpretation as a pair of partitions $(\lambda^{(1)}|\lambda^{(2)})$, corresponding to the base and top regions. This allows us to write the RYDs in a compact way. Pairs of partitions are used in other indexing sets for Schubert varieties for these spaces, see, e.g., \cite{Prag}, \cite{PragD}, \cite{Tam:qcig}, \cite{Coskun.Vakil}, \cite{Coskun.Orthogonal}, but the pairs of partitions used in these indexing sets differ from those that arise from RYDs. 

We now describe the pair of partitions $(\lambda^{(1)}|\lambda^{(2)})$ associated to an RYD $\lambda$. In each type, $\lambda^{(2)}$ is a strict partition in $(k-1,k-2,\ldots,0)$. In types B/C, $\lambda^{(1)}$ is a partition in $k\times (2n+1-2k)$. In type D, $\lambda^{(1)}$ is a partition in $k\times (2n-2k)$, and also if $\lambda^{(1)}_i=n-k$ for some $1\le i\le k$ we assign a $\uparrow$ (respectively, $\downarrow$) if $\lambda$ uses the root above $\beta_{n-1}$ (respectively, $\beta_n$) in the $i$th double-tailed diamond. 

\begin{Example}
In the partition pair notation, the RYDs of Example~\ref{ex:Bpicture} are respectively $((4,1,1)|(2,0,0))$ and $((3,2,1,0)|(2,1,0,0))$, and the RYDs of Example~\ref{ex:Dpicture} are respectively $((4,3,3)|(2,1,0))^\uparrow$ and $((4,3,3,1)|(3,1,0,0))$. 
\end{Example}

We now follow \cite{BKT:Inventiones}. An {\bf $(n-k)$-strict partition} is defined to be a partition $\gamma$ such that $\gamma_i>\gamma_{i+1}$ whenever $\gamma_i > n-k$. The Schubert varieties of $OG(k,2n+1)$ and $LG(k,2n)$ are indexed by the set $P(n-k,n)$ of all $(n-k)$-strict partitions in a $k \times (2n-k)$ rectangle. The Schubert varieties of $OG(k,2n)$ are indexed by the set $\tilde{P}(n-k,n)$ of all pairs $\tilde{\gamma}=(\gamma; {\tt type}(\gamma))$, where $\gamma$ is an $(n-k)$-strict partition in a $k \times (2n-1-k)$ rectangle, and also ${\tt type}(\gamma)=0$ if no part of $\gamma$ has size $n-k$ and ${\tt type}(\gamma)\in \{1,2\}$ otherwise. 

We obtain the following translations between RYDs and the indexing sets of \cite{BKT:Inventiones}:

\begin{Proposition}\label{prop:RYDtoBKT}
There is a bijection $f_k:\mathbb{Y}_{OG(k,2n+1)}\rightarrow P(n-k,n)$ for each $1\le k<n$, via
\[f_k(\lambda) = (\lambda^{(1)}_i+\lambda^{(2)}_i)_{1\le i\le k}.\]
The Schubert variety indexed by $\lambda$ is equal to the Schubert variety indexed by $f_k(\lambda)$.
\end{Proposition}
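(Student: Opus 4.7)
The plan is to prove both parts (the bijection $f_k$ and the identification of Schubert varieties) simultaneously by tracing both $\mathbb{Y}_{OG(k,2n+1)}$ and $P(n-k,n)$ through the common intermediate $W^{OG(k,2n+1)}$ of minimal-length coset representatives. Present each such $w$ as a signed permutation $(w_1,\ldots,w_n)$ with $w_1 \prec w_2 \prec \cdots \prec w_k$ in the signed order
\[1 \prec 2 \prec \cdots \prec n \prec -n \prec -(n-1) \prec \cdots \prec -1,\]
and $0 < w_{k+1} < \cdots < w_n$. The RYD $\lambda(w)$ is the restriction of $\mathrm{Inv}(w)$ to $\Lambda_k$, and BKT's partition $\gamma(w) \in P(n-k,n)$ is its image under their bijection. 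The goal is to verify $f_k(\lambda(w)) = \gamma(w)$ for every such $w$, whence both bijectivity of $f_k$ and the Schubert correspondence follow automatically, since both indexings assign the Schubert variety $X_w$ to the element $w$.

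First I would use the standard inversion criteria for signed permutations ($e_a - e_b$ an inversion iff $w_a \succ w_b$; $e_a$ iff $w_a < 0$; $e_a + e_b$ iff $w_a \succ -w_b$) to give explicit formulas for $\lambda^{(1)}_i$ (counting inversions in the base row at position $p = k+1-i$) and $\lambda^{(2)}_i$ (counting staircase inversions with second index $p$). The key dichotomy is: if $w_p > 0$, then $\lambda^{(1)}_i = \#\{j>k : w_j < w_p\} \le n-k$ and $\lambda^{(2)}_i = 0$, while if $w_p < 0$, then $\lambda^{(1)}_i = (n-k)+1+\#\{j>k : w_j > |w_p|\} \ge n-k+1$ and $\lambda^{(2)}_i$ is a nontrivial count over positions $p' < p$. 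From these formulas I would verify that $\lambda^{(1)}$ is a partition in $k \times (2n+1-2k)$ and $\lambda^{(2)}$ is a strict partition in the staircase $(k-1, k-2, \ldots, 0)$, both being routine consequences of the signed-order structure of $w_1,\ldots,w_k$.

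Next I would verify that $\gamma := f_k(\lambda(w))$ is an $(n-k)$-strict partition in $k \times (2n-k)$. The bound $\gamma_1 \le (2n+1-2k) + (k-1) = 2n-k$ and the weak monotonicity $\gamma_i \ge \gamma_{i+1}$ are immediate from the shape of the components. For $(n-k)$-strictness, the dichotomy above shows $\gamma_i > n-k$ is equivalent to $w_{k+1-i} < 0$; in this case I would split according to the sign of $w_{k-i}$. When $w_{k-i} > 0$ the conclusion $\gamma_{i+1} \le n-k < \gamma_i$ is immediate; when $w_{k-i} < 0$, the ordering $w_{k-i} \prec w_{k+1-i}$ with both negative forces $|w_{k-i}| > |w_{k+1-i}|$, which produces a staircase inversion $e_{k-i} + e_{k+1-i}$ contributing to $\lambda^{(2)}_i$ but not to $\lambda^{(2)}_{i+1}$, and a short comparison of the corresponding counts in the two rows gives strict decrease $\gamma_i > \gamma_{i+1}$.

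Finally I would unpack BKT's definition of $\gamma(w)$ and observe that its $i$th part counts exactly the same set of inversions that make up $\lambda^{(1)}_i + \lambda^{(2)}_i$ (namely, base-row inversions involving the coordinate $p = k+1-i$, together with staircase inversions having $p$ as second coordinate), establishing the equality $f_k(\lambda(w)) = \gamma(w)$ termwise. The main obstacle will be this last termwise match together with the $(n-k)$-strictness verification in the doubly-negative subcase, the principal bookkeeping challenge being to keep the row-indexing conventions of the RYD picture consistent with those of BKT's partitions.
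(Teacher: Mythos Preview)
Your approach is correct and essentially matches the paper's: both compute $\lambda^{(1)}_i$ and $\lambda^{(2)}_i$ explicitly from the signed permutation $w$ (your sign dichotomy on $w_{k+1-i}$ is exactly the paper's Lemma describing the RYD of $w$) and then verify $\lambda^{(1)}_i+\lambda^{(2)}_i=\gamma_i(w)$ termwise, so that $f_k$ is the composite of the two known bijections through $W^{OG(k,2n+1)}$. One caveat on your final step: \cite{BKT:Inventiones} does not present $\gamma(w)$ directly as an inversion count, so the paper extracts the explicit formula for $\gamma_i$ by passing through the PR-shapes of Pragacz--Ratajski and the bijection $PR(k,n)\to P(n-k,n)$; the paper also introduces an auxiliary diagram set $\Theta(k,2n+1)$ and proves injectivity of $f_k$ on it in order to obtain the side characterization $\mathbb{Y}_{OG(k,2n+1)}=\Theta(k,2n+1)$, which your more direct route simply bypasses.
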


\begin{Example}
The RYDs of Example~\ref{ex:Bpicture} correspond respectively to $(6,1,1)\in P(2,5)$ and $(5,3,1)\in P(1,5)$.
\end{Example}

\begin{Proposition}\label{prop:RYDtoBKTD}
There is a bijection $F_k:\mathbb{Y}_{OG(k,2n)}\rightarrow \tilde{P}(n-k,n)$ for each $1\le k <n$, via 
\[F_k(\lambda) = \begin{cases}
((\lambda^{(1)}_i+\lambda^{(2)}_i)_{1\le i\le k};1) & \text{if $\lambda$ is assigned $\uparrow$}\\                 
((\lambda^{(1)}_i+\lambda^{(2)}_i)_{1\le i\le k};2) & \text{if $\lambda$ is assigned $\downarrow$}\\                 
((\lambda^{(1)}_i+\lambda^{(2)}_i)_{1\le i\le k};0) & \text{otherwise}      
\end{cases}\]
The Schubert variety indexed by $\lambda$ is equal to the Schubert variety indexed by $F_k(\lambda)$.
\end{Proposition}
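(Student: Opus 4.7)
The plan is to prove Proposition 1.2 in parallel with Proposition 1.1 (the type B/C case), with the additional ingredient being a careful analysis of the double-tailed diamond structure of the base region in type D and its interaction with the type assignment in $\tilde P(n-k,n)$.

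First I would establish well-definedness and bijectivity of $F_k$ combinatorially. A lower order ideal $\lambda$ in $\Lambda_k$ restricts to a lower order ideal in each of the $k$ double-tailed diamonds of the base region and to one in the staircase top region $(k-1,k-2,\ldots,0)$; the restriction to the top region is captured by the strict partition $\lambda^{(2)}$. In each double-tailed diamond the restriction is determined by $\lambda^{(1)}_i$ with one exception: exactly at the fork level $n-k$ there is a genuine binary choice of which of the two roots (above $\beta_{n-1}$ or above $\beta_n$) to include when $\lambda^{(1)}_i=n-k$, and this is precisely what the $\uparrow/\downarrow$ decoration encodes. The size bound $\lambda^{(1)}_i+\lambda^{(2)}_i\le (2n-2k)+(k-i)=2n-k-i$ places $F_k(\lambda)$ in a $k\times(2n-1-k)$ rectangle. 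For $(n-k)$-strictness, I would verify that whenever $\gamma_i:=\lambda^{(1)}_i+\lambda^{(2)}_i>n-k$, the covering relations linking the $i$-th and $(i+1)$-st diamonds across the staircase force $\gamma_i>\gamma_{i+1}$. For the type: $\gamma$ has a part equal to $n-k$ precisely when some $\lambda^{(1)}_i=n-k$ and $\lambda^{(2)}_i=0$, i.e.\ precisely when $\lambda$ carries a decoration, and $\uparrow\leftrightarrow 1$, $\downarrow\leftrightarrow 2$. An inverse $F_k^{-1}$ is given by the greedy rule: $\lambda^{(2)}_i$ is the largest value $\le k-i$ compatible with strict decrease of $\lambda^{(2)}$ and with the lower-ideal property, $\lambda^{(1)}_i=\gamma_i-\lambda^{(2)}_i$, and the decoration is read off from $t$.

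For the claim that the Schubert variety indexed by $\lambda$ equals the one indexed by $F_k(\lambda)$, I would translate both indexings into elements of $W^{OG(k,2n)}$. The RYD convention assigns to $\lambda$ the unique coset representative whose positive-root inversion set is $\lambda$, while the indexing of \cite{BKT:Inventiones} assigns to $\tilde\gamma$ an explicit signed permutation (described in Section~3). Matching the two reduces to a direct combinatorial verification, row by row, that these two assignments agree; in particular the length of the coset representative equals $|\lambda^{(1)}|+|\lambda^{(2)}|=|\gamma|$, and the signed one-line notation is recovered from the diamonds and the staircase. The main obstacle will be the fork of each double-tailed diamond: this is the single place where type D departs nontrivially from type B/C and where the $\uparrow/\downarrow$ decoration must be matched with the type $1$ vs.\ type $2$ distinction. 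Concretely, I will need to check that the two coset representatives in $W^{OG(k,2n)}$ sharing the same underlying partition $\gamma$ (but of differing types) correspond precisely to the two choices of which root above $\beta_{n-1}$ or $\beta_n$ is used at the fork, using the action of the simple reflections swapping $\beta_{n-1}$ and $\beta_n$ that is specific to type D.
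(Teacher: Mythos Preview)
Your plan is correct in outline and would yield a valid proof, but it is organized somewhat differently from the paper's argument, and the difference is worth noting.

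For bijectivity, you propose to construct an explicit inverse $F_k^{-1}$ by a greedy rule. The paper instead introduces an auxiliary set $\Theta(k,2n)$ of ``$W^{OG(k,2n)}$-diagrams'' (lower order ideals in each region satisfying a support condition linking base rows to top roots), shows $\mathbb{Y}_{OG(k,2n)}\subseteq\Theta(k,2n)$, and then proves only that $F_k$ is an \emph{injection} $\Theta(k,2n)\to\tilde P(n-k,n)$. Bijectivity then follows by a cardinality argument, stringing together the known bijections $\tilde P(n-k,n)\to T'(k,n)\to W^{OG(k,2n)}\to\mathbb{Y}_{OG(k,2n)}$ from \cite{BKT:Inventiones} and \cite{Tam:qcig}. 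This buys the paper a complete combinatorial characterization $\mathbb{Y}_{OG(k,2n)}=\Theta(k,2n)$ as a byproduct, and avoids having to verify that the greedy inverse actually lands in $\mathbb{Y}_{OG(k,2n)}$ (which, in your approach, implicitly requires the same support condition). Your approach is more self-contained but forgoes this extra structural statement.

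For the Schubert-variety equality, both arguments pass through $W^{OG(k,2n)}$. The paper gives an explicit formula (Lemma~3.14) for $\lambda^{(1)}_i,\lambda^{(2)}_i$ in terms of the signed permutation $w$, separately for type~I and type~II permutations, and an explicit formula (Corollary~3.11) for the BKT partition $\gamma$ in terms of $w$ via the intermediate T-shape; the check that $\lambda^{(1)}_i+\lambda^{(2)}_i=\gamma_i$ is then a one-line identity of set cardinalities. Your sketch of matching the fork behavior to the type $1$/type $2$ distinction is exactly what Lemma~3.14 makes precise.
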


\begin{Example}
The RYDs of Example~\ref{ex:Dpicture} correspond respectively to $((6,4,3);1)\in P(3,6)$ and $((7,4,3,1);0)\in P(2,6)$.
\end{Example}

Propositions~\ref{prop:RYDtoBKT} and \ref{prop:RYDtoBKTD} are used to prove agreement of \cite[Theorem 4.1]{Searles.Yong} and \cite[Theorem 5.3]{Searles.Yong} with the Pieri rules of \cite{BKT:Inventiones}. Specifically, let 
$\star$ denote the product on RYDs of \cite[Theorem 4.1]{Searles.Yong} or \cite[Theorem 5.3]{Searles.Yong}, and let $\Psi$ denote the linear map determined by sending an RYD $\lambda$ to its corresponding Schubert class $\sigma_\lambda$. 

\begin{Theorem}\label{Thm:Pieriagreement}
Suppose $\lambda$ is an RYD indexing a Pieri class. Then
\begin{itemize}
\item[(I)] If $\lambda,\mu\in \mathbb{Y}_{OG(2,2n+1)}$, then $\Psi(\lambda\star\mu) = \sigma_{f_2(\lambda)}\cdot\sigma_{f_2(\mu)}\in H^\star(LG(2,2n))$
\item[(II)] If $\lambda,\mu\in \mathbb{Y}_{OG(2,2n)}$, then $\Psi(\lambda\star\mu) = \sigma_{F_2(\lambda)}\cdot\sigma_{F_2(\mu)}\in H^\star(OG(2,2n))$.
\end{itemize}
\end{Theorem}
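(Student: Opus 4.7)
The plan is a direct term-by-term comparison: given a Pieri RYD $\lambda$ and an arbitrary RYD $\mu$, I will show that the expansion of $\lambda\star\mu$ under the \cite{Searles.Yong} rule, transported by $f_2$ (respectively $F_2$), matches the Pieri expansion of $\sigma_{f_2(\lambda)}\cdot\sigma_{f_2(\mu)}$ (respectively $\sigma_{F_2(\lambda)}\cdot\sigma_{F_2(\mu)}$) in \cite{BKT:Inventiones}. Propositions~\ref{prop:RYDtoBKT} and \ref{prop:RYDtoBKTD} guarantee that RYDs on each side index the same Schubert classes under $\Psi$, so it only remains to verify a combinatorial identity between the two expansion formulas.

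First I would identify the Pieri RYDs. With $k=2$ the top region is a staircase of one root and the base region is either a $2\times(2n-3)$ rectangle (cases $OG(2,2n+1)$ and $LG(2,2n)$) or two double-tailed diamonds of $2n-4$ roots each (case $OG(2,2n)$). A Pieri RYD has the form $((p,0)|(0,0))$ or $((p,0)|(1,0))$ with $p$ bounded by the width of the base region, and its image under $f_2$ or $F_2$ is a one-row $(n-2)$-strict partition, matching the BKT parametrization of Pieri classes on the input side.

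Next I would unpack the \cite{Searles.Yong} formula for $\lambda\star\mu$ in the Pieri case and compare it constraint-by-constraint to the BKT Pieri rule. Writing $\nu=(\nu^{(1)}|\nu^{(2)})$ for an RYD appearing in $\lambda\star\mu$, and setting $\gamma_i:=\nu^{(1)}_i+\nu^{(2)}_i$, the degree and containment conditions translate into the $(n-2)$-strict skew-shape condition of \cite{BKT:Inventiones}. The more delicate positional restrictions in the $\star$-rule---particularly those governing which roots may be added near the boundary between base and top region, or adjacent to $\beta_{n-1},\beta_n$---should correspond to the remaining conditions of the BKT Pieri rule. I would carry out this dictionary one local condition at a time for each of the classical (co)adjoint cases covered by \cite[Theorems 4.1, 5.3]{Searles.Yong}.

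The main obstacle is case~(II), where $F_2$ attaches an $\uparrow/\downarrow/\emptyset$ decoration encoding the type label ${\tt type}(\gamma)\in\{1,2,0\}$ of \cite{BKT:Inventiones}. In the type~D Pieri rule, when a part of $\gamma$ equals $n-2$ the type data is determined by a connected-component count on the skew shape, and certain terms arising with multiplicity two are split between the two type labels; this must be reconciled with the $\uparrow/\downarrow$ decoration produced by the $\star$-rule, which is instead determined by whether $\nu$ uses the root above $\beta_{n-1}$ or $\beta_n$ in each double-tailed diamond. Since $k=2$ there are at most two such diamonds, so a finite case analysis, split according to how many of $\nu^{(1)}_1,\nu^{(1)}_2$ equal $n-2$, should suffice to match the two type assignments. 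With term-by-term agreement established in both~(I) and~(II), applying $\Psi$ and invoking Propositions~\ref{prop:RYDtoBKT} and~\ref{prop:RYDtoBKTD} yields the required identities of products of Schubert classes, completing the argument.
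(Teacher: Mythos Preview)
Your overall strategy is the paper's: reduce via Propositions~\ref{prop:RYDtoBKT} and~\ref{prop:RYDtoBKTD} to a combinatorial identity, then verify case by case. But your plan is almost entirely about matching the \emph{supports} of the two expansions (which $\nu$ appear), and this is not where the work is. In the BKT rule the coefficient of $\sigma_\delta$ is $2^{N(\gamma,\delta)}$ (type C) or $\epsilon_{\tilde\gamma\tilde\delta}\,2^{N'(\gamma,\delta)}$ (type D), where $N$ counts connected components of a set $\mathbb{A}$ of skew boxes not ``killed'' by the relatedness relation; on the $\star$ side, nontrivial coefficients arise from case (B) of Definitions~\ref{def:LGproduct}/\ref{def:fakeproduct} and, in type D, from the rescaling and disambiguation steps (i)--(iii) of Definition~\ref{def:starproduct}. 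The paper's proof is chiefly a computation of $N(\gamma,\delta)$: it introduces the notion of a component of $\mathbb{D}$ being \emph{bisected}, characterizes exactly when this happens (Lemmas~\ref{lemma:bisectionC}, \ref{lemma:bisection}), and organizes the case analysis by whether $|\overline\lambda|$ and $p+|\overline\lambda|$ cross the threshold $2n-3$ (resp.\ $2n-4$)---precisely the (A)/(B)/(C) split in the definition of $\star$. Your ``positional restrictions near the boundary'' does not touch this connected-component count, and without it you cannot match multiplicities.

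Your description of the type D difficulty is also misaimed. The $\uparrow/\downarrow$ decoration corresponds to ${\tt type}\in\{1,2\}$ directly under $F_2$; the genuine subtlety is that when $N'(\gamma,\delta)=0$ the BKT factor $\epsilon_{\tilde\gamma\tilde\delta}\in\{0,1\}$ is governed by the \emph{parity} of $h(\tilde\gamma,\tilde\delta)=g(\gamma,\delta)+\max({\tt type}(\gamma),{\tt type}(\delta))$, and this must be reconciled with the factor $\eta_{\overline\lambda,\overline\mu}$ and the disambiguation rules (iii.3a)--(iii.3b) in Definition~\ref{def:starproduct}. The paper handles this via a further trichotomy $p>n-2$, $p<n-2$, $p=n-2$, each with subcases on $\lambda_1$ versus $n-2$; a case split on how many $\nu^{(1)}_i$ equal $n-2$ will not by itself recover these parity constraints.
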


\subsection{Organization}
In Section 2, we prove Theorem~\ref{Thm:BKRYD}. In Section 3, we prove Propositions~\ref{prop:RYDtoBKT} and \ref{prop:RYDtoBKTD}. In Section 4, we utilize Proposition~\ref{prop:RYDtoBKT} to prove Theorem~\ref{Thm:Pieriagreement}(I), and in Section 5 we utilize Proposition~\ref{prop:RYDtoBKT} to prove Theorem~\ref{Thm:Pieriagreement}(II). As discussed in \cite{Searles.Yong}, the correctness of the rule for $LG(2,2n)$ implies the correctness of the rule for the adjoint $OG(2,2n+1)$, so we do not need to prove this case separately.

\section{Proof of Theorem~\ref{Thm:BKRYD}}

We begin by completely characterizing the RYDs of \cite{Searles.Yong} in the case of $F_{{\tt k}}$. Fix ${\tt k}$ and recall $I_i=[k_{i-1}+1,k_i]$ for $1\le i\le d$, where we set $k_0=0$ and $k_d=n$. Let $C$ denote the set of all nonnegative integer vectors $c=(c_1,\ldots,c_{n-1})$ satisfying $c_j\le n-j$.  Let $C_{{\tt k}}\subset C$ denote the set of $c\in C$ such that for $1\le j <n$, $c_j > c_{j+1}$ only if $j$ and $j+1$ are not in the same interval $I_i$ (we set $c_n=0$). For any permutation $w\in S_n$, its {\bf code} is defined to be the vector $c_w\in C$ such that $(c_w)_i$ is the number of positions $j$ satisfying $i<j$ and $w(i)>w(j)$. For example, if $n=7$ and ${\tt k} = \{1,3,5,6\}$ then $w = 5361742 \in S_7^{{\tt k}}$ has code $c_w=(4,2,3,0,2,1)\in C_{{\tt k}}$. The following is clear:

\begin{Claim}\label{claim:permtofromcode}
The map that takes $w\in S_n^{{\tt k}}$ to its code $c_w$ is a bijection $S_n^{{\tt k}}\rightarrow C_{{\tt k}}$.
\end{Claim}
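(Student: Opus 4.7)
The plan is to reduce the claim to the classical bijection between $S_n$ and the set $C$ of Lehmer codes, and then verify that the descent condition on $w$ translates precisely to the stated inequalities on the code.

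First I would recall the classical fact that $w \mapsto c_w$ is a bijection $S_n \to C$. One standard way to see this is to note that $c_w$ determines $w$ via a simple reconstruction: $w(1)$ is the $(c_1+1)$-st smallest element of $\{1,\ldots,n\}$, then $w(2)$ is the $(c_2+1)$-st smallest of the remaining elements, and so on; conversely, any $c \in C$ arises from this procedure, producing a well-defined permutation.

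The key step is the following descent characterization: $w$ has a descent at position $j$ (i.e.\ $w(j)>w(j+1)$) if and only if $(c_w)_j > (c_w)_{j+1}$. To verify this, I would compare the two counts $(c_w)_j$ and $(c_w)_{j+1}$ by splitting the positions $j' > j$ into the single position $j+1$ and the positions $j' > j+1$. For $j' > j+1$, if $w(j+1)>w(j')$ then the fact that either $w(j)>w(j+1)$ or $w(j)<w(j+1)$ combined with $w(j+1)>w(j')$ controls the comparison $w(j)$ vs.\ $w(j')$ in one direction; the analogous argument handles the case $w(j+1)<w(j')$. The net result is that if $w(j)>w(j+1)$ then $(c_w)_j \ge (c_w)_{j+1}+1$, and if $w(j)<w(j+1)$ then $(c_w)_j \le (c_w)_{j+1}$.

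Finally, I would combine these two ingredients. Since $w \in S_n^{\tt k}$ means every descent of $w$ occurs in a position $j \in \{k_1,\ldots,k_{d-1}\}$, which is the same as saying $j$ and $j+1$ lie in different intervals $I_i$, the descent characterization shows that $w \in S_n^{\tt k}$ if and only if $c_w \in C_{\tt k}$. Thus the classical bijection $S_n \to C$ restricts to the desired bijection $S_n^{\tt k} \to C_{\tt k}$. There is no serious obstacle; the only thing that needs care is the case analysis for the descent characterization, which is a routine comparison but must be done in both directions to yield an if-and-only-if.
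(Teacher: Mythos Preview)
Your proof is correct. The paper simply asserts this claim as ``clear'' without giving any argument, so your proposal fills in exactly the details one would expect: the restriction of the classical Lehmer-code bijection together with the standard descent characterization $(c_w)_j > (c_w)_{j+1} \Leftrightarrow w(j) > w(j+1)$.
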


Given a subset $S\subset\Omega_{GL_n}$, define a nonnegative integer vector $h_S=(h_1,\ldots h_{n-1})$ by letting $h_j$ be the number of roots of the form $(j,b)=e_j-e_b$ in $S$. Call $S$ a {\bf ${\tt k}$-diagram} if the roots in $S$ form a lower order ideal in each region $\Lambda_{{\tt k}}^{ij}$, and also $S$ satisfies a {\bf hook condition}: a root $\alpha$ must be in $S$ (respectively, must not be in $S$) if more than half of the roots in $\Omega_{GL_n}$ diagonally south-east and south-west of $\alpha$ are in $S$ (respectively, not in $S$). Let $\Theta_{{\tt k}}$ denote the set of all ${\tt k}$-diagrams. 

\begin{Claim}\label{claim:diagramtocode}
The map that takes a ${\tt k}$-diagram $\theta$ to $h_\theta$ is an injection $\Theta_{{\tt k}}\rightarrow C_{{\tt k}}$.
\end{Claim}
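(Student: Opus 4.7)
My plan has two parts: proving well-definedness ($h_\theta\in C_{{\tt k}}$), then injectivity.

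For well-definedness, $0\le h_j\le n-j$ is immediate, and the condition $h_j\le h_{j+1}$ whenever $j,j+1\in I_i$ follows from the lower order ideal property. Indeed, in each region $\Lambda_{{\tt k}}^{ii'}$ with $i'>i$, the relation $(j,b)\succ(j+1,b)$ (obtained by subtracting the simple root $e_j-e_{j+1}$) forces $\{b\in I_{i'}:(j,b)\in\theta\}\subseteq\{b\in I_{i'}:(j+1,b)\in\theta\}$, and summing over $i'$ gives the inequality.

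For injectivity I exhibit a candidate inverse: given $c\in C_{{\tt k}}$, Claim~\ref{claim:permtofromcode} produces a unique $w_c\in S_n^{{\tt k}}$ with code $c$, and I would show $\theta_c:=\mathrm{Inv}(w_c)$ lies in $\Theta_{{\tt k}}$ with $h_{\theta_c}=c$. The lower order ideal property in each region follows from $w_c$ being increasing on each $I_i$; the hook condition for any inversion set follows from a case analysis on the relative orderings of $w(a),w(j),w(b)$ for $a<j<b$, showing that the SE/SW count in $\mathrm{Inv}(w)$ differs from $b-a-1$ by exactly $|\{j:w(a)>w(j)>w(b)\}|$ when $(a,b)\in\mathrm{Inv}(w)$ and by $-|\{j:w(a)<w(j)<w(b)\}|$ otherwise.

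To finish, I would show by contradiction that any $\theta\in\Theta_{{\tt k}}$ with $h_\theta=c$ equals $\theta_c$. Suppose $\theta\ne\theta_c$, and take the smallest $a$ for which $\{b:(a,b)\in\theta\}$ and $\{b:(a,b)\in\theta_c\}$ differ; since the $a$th entry of $h_\theta$ equals that of $h_{\theta_c}$, these two row-sets have the same cardinality, so the discrepancy must span at least two different regions $\Lambda_{{\tt k}}^{ii_1},\Lambda_{{\tt k}}^{ii_2}$ (within any single region the row is a length-determined initial segment). Applying the hook condition at a suitable root $(a,b)$---whose SE and SW sets lie either in rows $a'<a$ (already equal in $\theta$ and $\theta_c$) or in the part of row $a$ agreeing in $\theta$ and $\theta_c$---should yield a numerical inequality incompatible with that cardinality agreement. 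The main obstacle is the ``exactly half'' boundary case of the hook, which imposes no constraint; I expect to handle it by iterating to a higher hook where the imbalance becomes strict, using the lower order ideal structure within the discrepant regions to propagate the ambiguity upward.
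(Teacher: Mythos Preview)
Your well-definedness argument is correct and matches the paper's. For injectivity, however, your plan has a genuine gap. First a slip: the hook of $(a,b)$ consists of the roots $(a,l)$ with $a<l<b$ (in row $a$) and the roots $(m,b)$ with $a<m<b$ (in rows $a+1,\dots,b-1$), so for the off-row part of the hook to lie in already-agreeing territory you must take the \emph{largest} discrepant row index $a$, not the smallest. More seriously, even after this correction the row-$a$ portion of the hook of any $(a,b)$ consists of other entries of the very row in dispute, so there is no evident ``suitable root $(a,b)$'' whose entire hook sits in the agreed-upon part; the constraints within row $a$ are self-referential, and this circularity is precisely what must be broken. Your suggestion to iterate to higher hooks until the imbalance becomes strict is only a hope---nothing prevents a cascade of exactly-half hooks, and you have not exhibited a termination or monotonicity argument.

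The paper avoids this by a direct construction rather than a contradiction. Given $c\in C$ it builds the unique hook-satisfying $S$ with $h_S=c$ inductively from row $n-1$ down to row $1$; within each row it colors roots black one at a time by an explicit greedy rule (if some as-yet-uncolored root in the current row has exactly half its hook already black, color the highest such root black; otherwise color the lowest uncolored root black), and observes that each individual coloring is forced by the hook condition. Uniqueness then follows immediately---no need to identify the answer as $\mathrm{Inv}(w_c)$ in advance, and no contradiction argument to close. Your detour through $w_c$ also essentially reproves Claim~\ref{claim:permtodiagram}, which in the paper's ordering comes afterward.
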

\begin{proof}
By definition, $h_j\le n-j$. The condition that the roots in $\theta$ form a lower order ideal in each region forces $h_j > h_{j+1}$ only if $j$ and $j+1$ are not in the same interval $I_i$. So $h_\theta\in C_{{\tt k}}$.

To show injectivity, we will show that given $c\in C$, there is a unique $S\subset \Omega_{GL_n}$ satisfying both $h_S=c$ and the hook condition. We construct $S$ by coloring a root of $\Omega_{GL_n}$ black if it is in $S$, and white if it is not in $S$. If $c_{n-1}=0$ then we must color the root $(n-1,n)$ white, and if $c_{n-1}=1$ we must color it black. Now proceed inductively. Fix $j<n-1$ and suppose all roots of the form $(a,b)$ with $a>j$ have been colored white or black. Use the following procedure to color roots of the form $(j,b)$ black one-by-one until $h_j$ such roots have been colored black, at which point terminate the procedure and color all remaining such roots white:

If there exists a root of the form $(j,b)$ such that exactly half of the roots diagonally south-east and south-west of it are colored black, then color the highest such root black. Otherwise, color the lowest root of the form $(j,b)$ black.

It is clear that each coloring of a root in the above procedure is forced by the hook condition. Therefore, since the elements of $\Theta_{{\tt k}}$ satisfy the hook condition, the map $\Theta_{{\tt k}}\rightarrow C_{{\tt k}}$ is injective.
\end{proof}

\begin{Example}
Suppose $c=(4,2,3,0,2,1)$. Then the unique $S$ satisfying $h_S=c$ and the hook condition is shown below, with the roots in $S$ labelled according to the order in which they were colored black by the procedure of Claim~\ref{claim:diagramtocode}.

\[\begin{picture}(120,85)
\multiput(4.5,17.5)(13,13){5}{\line(1,1){10}}
\multiput(0,13)(13,13){6}{$\circ$}
\multiput(26,13)(13,13){5}{$\circ$}
\multiput(52,13)(13,13){4}{$\circ$}
\multiput(78,13)(13,13){3}{$\circ$}
\multiput(104,13)(13,13){2}{$\circ$}
\multiput(130,13)(13,13){1}{$\circ$}
\multiput(30.5,17.5)(13,13){4}{\line(1,1){10}}
\multiput(56.5,17.5)(13,13){3}{\line(1,1){10}}
\multiput(82.5,17.5)(13,13){2}{\line(1,1){10}}
\multiput(108.5,17.5)(13,13){1}{\line(1,1){10}}
\multiput(17,27)(13,13){5}{\line(1,-1){10}}
\multiput(43,27)(13,13){4}{\line(1,-1){10}}
\multiput(69,27)(13,13){3}{\line(1,-1){10}}
\multiput(95,27)(13,13){2}{\line(1,-1){10}}
\multiput(121,27)(13,13){1}{\line(1,-1){10}}

\put(130,13){$\bullet$}
\put(131,19.5){{\tiny $1$}}

\put(117,26){$\bullet$}
\put(118,32.5){{\tiny $2$}}

\put(104,13){$\bullet$}
\put(105,19.5){{\tiny $3$}}

\put(52,13){$\bullet$}
\put(53,19.5){{\tiny $4$}}

\put(91,52){$\bullet$}
\put(92,58.5){{\tiny $5$}}

\put(78,39){$\bullet$}
\put(79,45.5){{\tiny $6$}}

\put(39,26){$\bullet$}
\put(40,32.5){{\tiny $7$}}

\put(78,65){$\bullet$}
\put(79,71.5){{\tiny $8$}}

\put(26,39){$\bullet$}
\put(27,45.5){{\tiny $9$}}

\put(65,78){$\bullet$}
\put(63,84.5){{\tiny $10$}}

\put(0,13){$\bullet$}
\put(-2.5,19.5){{\tiny $11$}}

\put(52,65){$\bullet$}
\put(49.5,71.5){{\tiny $12$}}

\end{picture}\]
\end{Example}

\begin{Claim}\label{claim:permtodiagram}
$\mathbb{Y}_{{\tt k}}\subseteq\Theta_{\tt k}$.
\end{Claim}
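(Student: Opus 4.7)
The plan is to take any $w \in S_n^{\tt k}$ and verify that its inversion set
\[\lambda_w = \{(a,b) \in \Omega_{GL_n} : w(a) > w(b)\}\]
satisfies the two defining conditions of a ${\tt k}$-diagram: (i) $\lambda_w$ is a lower order ideal in each region $\Lambda_{\tt k}^{ij}$, and (ii) $\lambda_w$ satisfies the hook condition.

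For (i), the key input is that $w \in S_n^{\tt k}$ has descents only at positions $k_1,\ldots,k_{d-1}$, so $w$ is increasing on each interval $I_\ell$. I would fix a region $\Lambda_{\tt k}^{ij} = I_i \times I_j$ with $i < j$, take $(a,b) \in \lambda_w \cap \Lambda_{\tt k}^{ij}$, and consider $(a',b') \in \Lambda_{\tt k}^{ij}$ with $(a',b') \preceq (a,b)$, i.e., $a \le a'$ and $b' \le b$. Since $a,a' \in I_i$ with $a \le a'$ and $b',b \in I_j$ with $b' \le b$, monotonicity of $w$ on $I_i$ and on $I_j$ yields $w(a') \ge w(a) > w(b) \ge w(b')$, so $(a',b') \in \lambda_w$.

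For (ii), I would fix a non-simple root $\alpha = (a,b)$ with $b > a+1$ (the hook condition is vacuous on simple roots, where both diagonals are empty). The roots south-east of $\alpha$ are $(a+1,b),\ldots,(b-1,b)$ and the roots south-west are $(a,a+1),\ldots,(a,b-1)$, giving $2(b-a-1)$ roots that pair up naturally as $\{(a,c),(c,b)\}$ for $a < c < b$. Setting $x = w(a)$ and $y = w(b)$, I would split each pair according to whether $w(c)$ lies below $\min(x,y)$, strictly between $x$ and $y$, or above $\max(x,y)$. A one-line check in each case shows: when $x > y$ (so $\alpha \in \lambda_w$) every pair contributes at least one inversion, with strict excess iff $y < w(c) < x$, giving a total of $(b-a-1) + N$ with $N \ge 0$; when $x < y$ (so $\alpha \notin \lambda_w$) every pair contributes at most one inversion, giving a total of $(b-a-1) - N'$ with $N' \ge 0$. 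Consequently, the number of south-east/south-west roots lying in $\lambda_w$ strictly exceeds half only when $\alpha \in \lambda_w$, and strictly falls short of half only when $\alpha \notin \lambda_w$, which is precisely the hook condition.

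The only real obstacle is organizing the case analysis in (ii) cleanly; once the natural pairing of south-east/south-west roots by the index $c$ is in place, the inversion counts per pair are immediate and the ``half'' threshold matches on the nose.
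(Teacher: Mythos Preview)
Your proof is correct and takes essentially the same approach as the paper: part (i) is identical, and for the hook condition both arguments rest on the natural pairing $\{(a,c),(c,b)\}$ indexed by $a<c<b$. The paper phrases it as a pigeonhole step (more than half inverted forces some pair with both inverted, giving $w(a)>w(m)>w(b)$), whereas you argue the contrapositive by counting inversions per pair, but the underlying idea is the same.
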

\begin{proof}
Let $\lambda\in \mathbb{Y}_{{\tt k}}$ and let $w$ be the element of $S_n^{{\tt k}}$ corresponding to $\lambda$. Consider a region $\Lambda_{{\tt k}}^{ij}$ of $\Omega_{GL_n}$. Let $a,a'\in I_i$ and $b,b'\in I_j$, and suppose $(a',b')\preceq (a,b)$ in $\Omega_{GL_n}$. Then by definition, $a\le a'$ and $b'\le b$. If also $w(a)>w(b)$, then since $w$ is increasing on $I_i$ and $I_j$, we have $w(a')>w(b')$. Thus the restriction $\lambda_{ij}$ of $\lambda$ to $\Lambda_{{\tt k}}^{ij}$ is a lower order ideal in $\Lambda_{{\tt k}}^{ij}$.

Now consider any root $(a,b)\in\Omega_{GL_n}$. The hook associated to $(a,b)$ is all roots $(a,l)$ for $a<l<b$ and all roots $(j,b)$ for $a<j<b$. If more than half of these are inverted by $w$, then there exists an $m$ with $a<m<b$ such that $w(a)>w(m)$ and $w(m)>w(b)$, hence $w$ must invert $(a,b)$. Similarly, if fewer than half of the roots in the hook are inverted, then $w$ cannot invert $(a,b)$. Thus $\lambda$ satisfies the hook condition.
\end{proof}

\begin{Corollary}\label{claim:RYDclassify}
$\mathbb{Y}_{{\tt k}} = \Theta_{{\tt k}}$.
\end{Corollary}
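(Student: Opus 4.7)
The plan is to combine the three preceding claims via a small cardinality argument. We already have the inclusion $\mathbb{Y}_{{\tt k}} \subseteq \Theta_{{\tt k}}$ from Claim~\ref{claim:permtodiagram}, and an injection $\Theta_{{\tt k}} \hookrightarrow C_{{\tt k}}$ sending $\theta \mapsto h_\theta$ from Claim~\ref{claim:diagramtocode}. It therefore suffices to show that the composite
\[
\mathbb{Y}_{{\tt k}} \hookrightarrow \Theta_{{\tt k}} \xrightarrow{\ h\ } C_{{\tt k}}
\]
is already surjective, for then $\mathbb{Y}_{{\tt k}}$, $\Theta_{{\tt k}}$, and $C_{{\tt k}}$ all have the same (finite) cardinality and the injection $\mathbb{Y}_{{\tt k}} \hookrightarrow \Theta_{{\tt k}}$ must be an equality.

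The key observation is that this composite coincides with the bijection $w \mapsto c_w$ from Claim~\ref{claim:permtofromcode}, under the identification of $\mathbb{Y}_{{\tt k}}$ with $S_n^{{\tt k}}$ via the inversion-set correspondence. Indeed, if $\lambda \in \mathbb{Y}_{{\tt k}}$ is the inversion set of $w\in S_n^{{\tt k}}$, then by definition $h_j(\lambda)$ counts the roots of the form $(j,b) = e_j - e_b$ in $\lambda$, which is precisely the number of positions $b > j$ with $w(j) > w(b)$; this is $(c_w)_j$. Hence $h_\lambda = c_w$, and running $w$ over $S_n^{{\tt k}}$ produces every element of $C_{{\tt k}}$ by Claim~\ref{claim:permtofromcode}.

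Combining these two facts gives the chain
\[
|\mathbb{Y}_{{\tt k}}| = |S_n^{{\tt k}}| = |C_{{\tt k}}| \ge |\Theta_{{\tt k}}| \ge |\mathbb{Y}_{{\tt k}}|,
\]
forcing equality throughout and, in particular, $\mathbb{Y}_{{\tt k}} = \Theta_{{\tt k}}$. There is no real obstacle here; the only thing to check carefully is the identification $h_\lambda = c_w$, which is a direct unpacking of the two definitions, and the bookkeeping that the three maps fit into a commutative triangle so the cardinality comparison is valid.
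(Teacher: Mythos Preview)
Your proof is correct and follows essentially the same approach as the paper: combine the inclusion $\mathbb{Y}_{{\tt k}}\subseteq\Theta_{{\tt k}}$ with the injection $\Theta_{{\tt k}}\hookrightarrow C_{{\tt k}}$ and the bijection $S_n^{{\tt k}}\cong C_{{\tt k}}$ to force equality by cardinality. Your explicit verification that $h_\lambda=c_w$ is a nice touch the paper omits, but the logical structure is the same.
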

\begin{proof}
Composing the injection from Claim~\ref{claim:diagramtocode} with the bijection of Claim~\ref{claim:permtofromcode} yields an injection $\Theta_{{\tt k}}\rightarrow S_n^{{\tt k}}$. By definition $\mathbb{Y}_{{\tt k}}$ is in bijection with $S_n^{{\tt k}}$, thus we have an injection $\Theta_{{\tt k}}\rightarrow \mathbb{Y}_{{\tt k}}$. By Claim~\ref{claim:permtodiagram}, $\mathbb{Y}_{{\tt k}}\subseteq \Theta_{{\tt k}}$, so $\mathbb{Y}_{{\tt k}}= \Theta_{{\tt k}}$. 
\end{proof}

Let $r_i = |I_i|=k_i-k_{i-1}$. We now follow \cite{Knutson.Purbhoo}. 
Let $G_n^{{\tt k}}$ denote the set of $n$-letter words $\tau$ from the alphabet $\{1,\ldots,d\}$, such that the letter $i$ is used $r_i$ times in $\tau$. Then the Schubert varieties of $F_{{\tt k}}$ are indexed by the elements of $G_n^{{\tt k}}$. Define a map $f:G_n^{{\tt k}}\rightarrow S_n^{{\tt k}}$ by letting $f(\tau)$ be the permutation, in one-line notation, obtained by writing down the positions of the ones in order, then the positions of the twos in order, etc. For example, if ${\tt k} = \{3,5,6\}$ and $\tau=2431121\in G_7^{\tt k}$ then $f(\tau)=4571632\in S_7^{\tt k}$. This is a bijection, and the Schubert variety of $F_{{\tt k}}$ indexed by $\tau$ is equal to the Schubert variety indexed by $f(\tau)$. Given $i,j$ with $1\le i < j\le d$, let $D_{ij}(\tau)$ be the word obtained by deleting all letters of $\tau$ that are not $i$ or $j$. Then $D_{ij}(\tau)$ indexes a Schubert variety in the Grassmannian $Gr_{r_i}(\mathbb{C}^{r_i+r_j})$.

\begin{Theorem}\cite[Theorem 3]{Knutson.Purbhoo}\label{Thm:BKfactorization}
Let $\tau,\pi,\rho\in G_n^{{\tt k}}$. Then
\[b_{\tau,\pi}^{\rho}(F_{{\tt k}})=\prod_{1\le i<j\le d}C_{D_{ij}(\tau),D_{ij}(\pi)}^{D_{ij}(\rho)}(Gr_{r_i}(\mathbb{C}^{r_i+r_j})).\]
\end{Theorem}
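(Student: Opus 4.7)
The plan is to prove the factorization geometrically, by a decoupling argument in the spirit of \cite{Richmond}, using the Belkale-Kumar definition of $\odot_0$ via Levi-movable intersections. Following \cite{Belkale.Kumar}, $b_{\tau,\pi}^{\rho}(F_{{\tt k}})$ equals the number of points in a transverse intersection $X_\tau \cap l_1 X_\pi \cap l_2 X_\rho$ for generic $l_1, l_2$ in the Levi subgroup $L = GL_{r_1} \times \cdots \times GL_{r_d}$ of $P_{\tt k}$, and vanishes on non-Levi-movable triples. Because the translates are restricted to $L$ rather than $G$, the transversality condition imposes the regional dimension balance $|\tau_{ij}|+|\pi_{ij}|=|\rho_{ij}|$; I will argue that it also makes the intersection decouple into a product indexed by the pairs $(i,j)$.

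First, I would decompose the tangent space at the identity coset as an $L$-module: $\mathfrak{g}/\mathfrak{p}_{\tt k} = \bigoplus_{1\le i<j\le d} \mathfrak{u}_{ij}$, where $\mathfrak{u}_{ij} \cong \mathrm{Hom}(\mathbb{C}^{r_i},\mathbb{C}^{r_j})$ collects the root spaces for roots in $\Lambda_{{\tt k}}^{ij}=I_i\times I_j$. Second, for each pair $i<j$, I would construct a natural projection $p_{ij}\colon F_{\tt k} \to Gr_{r_i}(\mathbb{C}^{r_i+r_j})$ that records only the $(i,j)$-interaction between the $I_i$- and $I_j$-blocks of a flag, and verify the compatibility $p_{ij}(X_\tau)=X_{D_{ij}(\tau)}$. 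Third, I would show that for Levi-movable triples the combined map $\prod_{i<j}p_{ij}$ restricts to an isomorphism
\[
X_\tau \cap l_1 X_\pi \cap l_2 X_\rho \;\cong\; \prod_{1\le i<j\le d} \bigl( X_{D_{ij}(\tau)} \cap l_1^{(ij)} X_{D_{ij}(\pi)} \cap l_2^{(ij)} X_{D_{ij}(\rho)} \bigr),
\]
for generic $l_1,l_2\in L$, where $l_k^{(ij)}$ denotes the image of $l_k$ in the $GL_{r_i}\times GL_{r_j}$ factor. Counting points on both sides then yields the claimed factorization.

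The main obstacle lies in the last step: showing that the intersection genuinely splits as a product and not merely surjects onto one with positive-dimensional fibers. The Levi-movability hypothesis is exactly what supplies the correct dimension count pair-by-pair, forcing the fibers of $\prod p_{ij}$ to be zero-dimensional when every factor is transverse. Combined with the $L$-equivariant tangent space decomposition, one should be able to check that $L$-generic transversality in $F_{\tt k}$ reduces to transversality in each $\mathfrak{u}_{ij}$-summand separately. Making this rigorous requires careful tangent space computations: one must verify that transversality along $\mathfrak{u}_{ij}$ is independent of what happens in $\mathfrak{u}_{i'j'}$ for $(i',j')\ne(i,j)$, and that a single generic $l\in L$ simultaneously realizes the required independence across all $\binom{d}{2}$ regions. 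Because $L$ acts diagonally on $\bigoplus \mathfrak{u}_{ij}$ through its block factors, genericity in $L$ is strong enough to achieve this, but writing the argument out demands a careful analysis of the $L$-equivariant normal bundles of $X_\tau$, $X_\pi$, $X_\rho$ inside $F_{\tt k}$.
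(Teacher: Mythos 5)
First, note that the paper does not prove this statement at all: it is imported verbatim as \cite[Theorem 3]{Knutson.Purbhoo} and then used, together with Lemma~\ref{lemma:Dijagreement}, to deduce Theorem~\ref{Thm:BKRYD}. So there is no internal proof to compare against; what you have written is an attempt to reprove Knutson--Purbhoo's factorization geometrically, and as it stands it has two genuine gaps.

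The first gap is at the very start. Belkale--Kumar do not define (and it is not a priori true) that $b_{\tau,\pi}^{\rho}(F_{{\tt k}})$ is the number of points of $X_\tau \cap l_1 X_\pi \cap l_2 X_\rho$ for generic $l_1,l_2$ in the Levi $L$. Levi-movability in \cite{Belkale.Kumar} is a transversality condition \emph{at the base point} $e$ for generic $L$-translates of the shifted Schubert \emph{cells} through $e$ (equivalently, a tangent-space condition inside $\mathfrak{g}/\mathfrak{p}$); the coefficient itself is still the cup-product coefficient, computed with generic translates by the full group $G$. Since $L$ does not act transitively on $F_{{\tt k}}$, Kleiman's theorem gives you no control over generic $L$-translates of the whole Schubert varieties: you cannot assert that the triple intersection is proper, finite, reduced, or of cardinality $C_{\tau,\pi}^{\rho}$. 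Making such a global statement rigorous is essentially the hard content of the theorem, not an available starting point. The second gap is the maps $p_{ij}$: no morphism $F_{{\tt k}}\to Gr_{r_i}(\mathbb{C}^{r_i+r_j})$ ``recording the $(i,j)$-interaction'' exists for non-consecutive blocks, and hence neither does the product map $\prod_{i<j}p_{ij}$ realizing your decoupling isomorphism. Already for $d=3$ there is no natural map $Fl_{k_1,k_2;\mathbb{C}^n}\to Gr_{r_1}(\mathbb{C}^{r_1+r_3})$ sending $X_\tau$ to $X_{D_{13}(\tau)}$: the interaction of blocks $1$ and $3$ through block $2$ is exactly the extension data that the Belkale--Kumar degeneration kills, and it is visible only in the $L$-module decomposition $\mathfrak{g}/\mathfrak{p}=\bigoplus_{i<j}\mathfrak{u}_{ij}$ of the tangent space, not as a fibration of the variety. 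The geometric proofs that do work (E.~Richmond's multiplicative formula, and Ressayre's approach) instead use the honest forgetful fibrations $F_{{\tt k}}\to F_{{\tt k}'}$ for ${\tt k}'\subset{\tt k}$, prove a two-factor product formula for Levi-movable coefficients along such a fibration, and induct on the number of steps; Knutson--Purbhoo's own proof is different again (combinatorial, via puzzles and a degeneration of the ring). Your dimension-count observation ($|\tau_{ij}|+|\pi_{ij}|=|\rho_{ij}|$ region by region) is correct and is exactly the inversion-set form of Levi-movability used in the paper, but by itself it does not substitute for the missing decoupling morphism or for the missing identification of $b_{\tau,\pi}^{\rho}$ with a count of points in an $L$-generic intersection.
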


Now let $w\in S_n^{{\tt k}}$. Define $D_{ij}'(w)$ to be the permutation on $[1,\ldots, r_i+r_j]$ whose entries are in the same relative order as the entries of the word obtained by deleting all entries of $w$ except those in $I_i$ or $I_j$. For example, let $n=7$, ${\tt k} = \{2,5\}$, and $w=2614537\in S_7^{{\tt k}}$. Then $D_{13}'(w) = 1324$, since deleting all entries of $w$ except those in $I_1$ or $I_3$ yields $2637$, which is in the same relative order as $1324$. This process is the same as in \cite[Definition 1]{Richmond}, where it is noted this is also the flattening function of \cite{Billey.Braden}. 

By definition, $D_{ij}'(w)\in S_{r_i+r_j}^{\{r_i\}}$. Thus $D_{ij}'(w)$ indexes a Schubert variety in the Grassmannian $Gr_{r_i}(\mathbb{C}^{r_i+r_j})$, and the RYD corresponding to $D_{ij}'(w)$ has only a single region inside $\Omega_{GL_{r_i+r_j}}$. We will denote this region $\Lambda_{r_i, r_i+r_j}$. Note that $\Lambda_{r_i, r_i+r_j}$ is the subposet of $\Omega_{GL_{r_i+r_j}}$ consisting of all roots above the $r_i$th simple root $e_{r_i}-e_{r_i+1}$.

\begin{Example}\label{ex:decomposition}
Let $n=7$ and ${\tt k} = \{2,5\}$. Then $r_1=2$, $r_2=3$ and $r_3=2$. Let $w=2614537\in S_7^{{\tt k}}$ and $\lambda$ the corresponding RYD. Below are $\lambda$ and the RYDs for, respectively, $D_{12}'(w) = 25134\in S_5^{\{2\}}$, $D_{13}'(w) = 1324\in S_4^{\{2\}}$ and $D_{23}'(w) = 13425\in S_5^{\{3\}}$. 

\[\begin{picture}(450,80)

\multiput(0,13)(13,13){6}{$\circ$}
\multiput(26,13)(13,13){5}{$\circ$}
\multiput(52,13)(13,13){4}{$\circ$}
\multiput(78,13)(13,13){3}{$\circ$}
\multiput(104,13)(13,13){2}{$\circ$}
\multiput(130,13)(13,13){1}{$\circ$}

\multiput(4.5,17.5)(13,13){5}{\line(1,1){10}}
\multiput(30.5,17.5)(13,13){4}{\line(1,1){10}}
\multiput(56.5,17.5)(13,13){3}{\line(1,1){10}}
\multiput(82.5,17.5)(13,13){2}{\line(1,1){10}}
\multiput(108.5,17.5)(13,13){1}{\line(1,1){10}}
\multiput(17,27)(13,13){5}{\line(1,-1){10}}
\multiput(43,27)(13,13){4}{\line(1,-1){10}}
\multiput(69,27)(13,13){3}{\line(1,-1){10}}
\multiput(95,27)(13,13){2}{\line(1,-1){10}}
\multiput(121,27)(13,13){1}{\line(1,-1){10}}

\put(13,26){$\bullet$}
\multiput(26,13)(13,13){4}{$\bullet$}

\put(104,13){$\bullet$}
\put(91,26){$\bullet$}

\thicklines

\put(28.5,4){\line(-1,1){26}}
\put(28.5,4){\line(1,1){65}}

\put(106.5,4){\line(-1,1){65}}
\put(106.5,4){\line(1,1){26}}

\thinlines

\put(10,50){{\tiny $\Lambda^{12}_{{\tt k}}$}}
\put(45,82){{\tiny $\Lambda^{13}_{{\tt k}}$}}
\put(110,50){{\tiny $\Lambda^{23}_{{\tt k}}$}}


\multiput(160,13)(13,13){4}{$\circ$}
\multiput(186,13)(13,13){3}{$\bullet$}
\multiput(212,13)(13,13){2}{$\circ$}
\multiput(238,13)(13,13){1}{$\circ$}

\multiput(164.5,17.5)(13,13){3}{\line(1,1){10}}
\multiput(190.5,17.5)(13,13){2}{\line(1,1){10}}
\multiput(216.5,17.5)(13,13){1}{\line(1,1){10}}

\multiput(177,27)(13,13){3}{\line(1,-1){10}}
\multiput(203,27)(13,13){2}{\line(1,-1){10}}
\multiput(229,27)(13,13){1}{\line(1,-1){10}}

\put(173,26){$\bullet$}

\thicklines

\put(188.5,4){\line(-1,1){26}}
\put(188.5,4){\line(1,1){39}}

\thinlines

\put(185,70){{\tiny $\Lambda_{2,2+3}$}}


\multiput(268,13)(13,13){3}{$\circ$}
\multiput(294,13)(13,13){2}{$\circ$}
\multiput(320,13)(13,13){1}{$\circ$}

\multiput(272.5,17.5)(13,13){2}{\line(1,1){10}}
\multiput(298.5,17.5)(13,13){1}{\line(1,1){10}}
\multiput(285,27)(13,13){2}{\line(1,-1){10}}
\multiput(311,27)(13,13){1}{\line(1,-1){10}}

\put(294,13){$\bullet$}

\thicklines

\put(296.5,4){\line(-1,1){26}}
\put(296.5,4){\line(1,1){26}}

\thinlines

\put(285,57){{\tiny $\Lambda_{2,2+2}$}}


\multiput(350,13)(13,13){4}{$\circ$}
\multiput(376,13)(13,13){3}{$\circ$}
\multiput(402,13)(13,13){2}{$\circ$}
\multiput(428,13)(13,13){1}{$\circ$}

\multiput(354.5,17.5)(13,13){3}{\line(1,1){10}}
\multiput(380.5,17.5)(13,13){2}{\line(1,1){10}}
\multiput(406.5,17.5)(13,13){1}{\line(1,1){10}}
\multiput(367,27)(13,13){3}{\line(1,-1){10}}
\multiput(393,27)(13,13){2}{\line(1,-1){10}}
\multiput(419,27)(13,13){1}{\line(1,-1){10}}

\put(402,13){$\bullet$}
\put(389,26){$\bullet$}

\thicklines

\put(404.5,4){\line(-1,1){39}}
\put(404.5,4){\line(1,1){26}}

\thinlines

\put(380,70){{\tiny $\Lambda_{3,3+2}$}}

\end{picture}\]

\end{Example}

The following is clear from the definitions: 

\begin{Lemma}\label{lemma:Dijagreement}
Let $\tau\in G_n^{{\tt k}}$. Then $D_{ij}'(f(\tau)) = f(D_{ij}(\tau))$.
\end{Lemma}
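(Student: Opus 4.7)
The plan is a direct bookkeeping argument, tracing both sides at the level of ``positions'' versus ``letters.'' First I would fix notation: write $p^i_1 < \cdots < p^i_{r_i}$ for the positions of the letter $i$ in $\tau$, and $q^j_1 < \cdots < q^j_{r_j}$ for the positions of the letter $j$. By the definition of $f$, the values of $w := f(\tau)$ on the interval $I_i$ are precisely $p^i_1, \ldots, p^i_{r_i}$ (in order), and the values on $I_j$ are $q^j_1, \ldots, q^j_{r_j}$. Hence the word obtained from $w$ by deleting all entries not in positions $I_i$ or $I_j$ is the concatenation $(p^i_1,\ldots,p^i_{r_i},q^j_1,\ldots,q^j_{r_j})$, and $D'_{ij}(w)$ is its flattening, i.e., each entry is replaced by its rank in the set $\{p^i_\bullet\} \cup \{q^j_\bullet\}$.

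Next I would compute the right-hand side similarly. The word $D_{ij}(\tau)$ has length $r_i+r_j$, and its $m$-th occurrence of the letter $i$ corresponds to the $m$-th occurrence of $i$ in $\tau$; since deletion preserves the relative order of surviving letters, the position within $D_{ij}(\tau)$ of this $m$-th $i$ equals the rank $a_m$ of $p^i_m$ in $\{p^i_\bullet\} \cup \{q^j_\bullet\}$. Setting $b_m$ to be the rank of $q^j_m$ analogously, the definition of $f$ applied to the two-letter word $D_{ij}(\tau)$ (listing positions of $i$'s in order, then positions of $j$'s) yields $f(D_{ij}(\tau)) = (a_1,\ldots,a_{r_i},b_1,\ldots,b_{r_j})$.

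Comparing, the flattening of $(p^i_1,\ldots,p^i_{r_i},q^j_1,\ldots,q^j_{r_j})$ replaces each $p^i_m$ by $a_m$ and each $q^j_m$ by $b_m$, so $D'_{ij}(f(\tau)) = (a_1,\ldots,a_{r_i},b_1,\ldots,b_{r_j}) = f(D_{ij}(\tau))$, as desired. There is no genuine obstacle: the lemma is morally the statement that the two natural restrictions to $\{i,j\}$---on words via $D_{ij}$ and on permutations via $D'_{ij}$---are intertwined by $f$. The only subtlety worth flagging in the write-up is that, when applying $f$ to $D_{ij}(\tau)$, one must identify its alphabet $\{i,j\}$ with the two-letter ordered alphabet so that the image lands in $S_{r_i+r_j}^{\{r_i\}}$; this is implicit in how $f$ is used in Theorem~\ref{Thm:BKfactorization}, so I would state it explicitly at the outset of the proof.
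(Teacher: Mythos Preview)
Your argument is correct and is precisely the unpacking of the definitions that the paper has in mind; the paper itself omits the proof, declaring the lemma ``clear from the definitions.'' Your remark about identifying the alphabet $\{i,j\}$ with a two-letter ordered alphabet when applying $f$ to $D_{ij}(\tau)$ is exactly the sort of care the paper leaves implicit.
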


\noindent \emph{Proof of Theorem~\ref{Thm:BKRYD}:}
Let $\lambda,\mu,\nu\in \mathbb{Y}_{{\tt k}}$, respectively corresponding to permutations $u,v,w\in S_n^{{\tt k}}$. By Theorem~\ref{Thm:BKfactorization} and Lemma~\ref{lemma:Dijagreement}, we have
\[b_{\lambda,\mu}^\nu(F_{{\tt k}})= b_{u,v}^{w}(F_{{\tt k}})=\prod_{1\le i<j\le d}C_{D_{ij}'(u),D_{ij}'(v)}^{D_{ij}'(w)}(Gr_{r_i}(\mathbb{C}^{r_i+r_j})).\]

Straightforwardly, $\Lambda^{ij}_{{\tt k}}\subset \Omega_{GL_n}$ is isomorphic (as a poset) to $\Lambda_{r_i, r_i+r_j}$, and the roots in $\Lambda^{ij}_{{\tt k}}$ inverted by $w$ correspond to the roots of $\Lambda_{r_i, r_i+r_j}$ inverted by $D_{ij}'(w)$ (as depicted in Example~\ref{ex:decomposition}). Jeu de taquin is known to compute the Schubert structure constants for Grassmannians (see, e.g., \cite{Thomas.Yong:comin} for this root-theoretic setting). Therefore, we have $C_{D_{ij}'(u),D_{ij}'(v)}^{D_{ij}'(w)}(Gr_{r_i}(\mathbb{C}^{r_i+r_j})) = e_{\lambda_{ij},\mu_{ij}}^{\nu_{ij}}$.
\qed

\section{Proof of Propositions~\ref{prop:RYDtoBKT} and \ref{prop:RYDtoBKTD}}

\subsection{Proof of Proposition~\ref{prop:RYDtoBKT}}

Fix $k<n$. We follow \cite{Prag}. The set $W^{OG(k,2n+1)}$ consists of all signed permutations of the form 
\[(y_1,y_2,\ldots , y_{k-r}, \overline{z_r}, \overline{z_{r-1}}, \ldots \overline{z_1}, v_1, v_2, \ldots v_{n-k})\]
where bars denote negative entries, $y_1<y_2<\ldots <y_{k-r}$, $z_r>z_{r-1}>\ldots > z_1$, $v_1<v_2<\ldots <v_{n-k}$ and $0\le r \le k$.

Define a {\bf PR shape} to be a pair of strict partitions $\alpha=(\alpha^{{\bf t}},\alpha^{{\bf b}})$ satisfying $\alpha^{{\bf t}}\subseteq (n-k)\times n$, $\alpha^{{\bf b}}\subseteq k\times n$ and $\alpha^{{\bf t}}_{n-k}\ge l(\alpha^{{\bf b}})+1$. Let $PR(k,n)$ denote the set of PR shapes. Then \cite{Prag} indexes the elements of $W^{OG(k,2n+1)}$ by PR shapes as follows:

\begin{Lemma}\label{lemma:PragBijection}\cite[Lemma 1.2]{Prag}
$W^{OG(k,2n+1)}$ is in bijection with $PR(k,n)$ via
\[\alpha^{{\bf b}}_j = n+1-z_j, \qquad 1\le j \le r\]
\[\alpha^{{\bf t}}_s = n+1-v_s+|\{q : z_q<v_s\}|, \qquad 1\le s \le n-k.\]
\end{Lemma}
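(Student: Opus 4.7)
The plan is to exhibit the proposed map as a bijection by constructing an explicit inverse and checking well-definedness on both sides.

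First, I would verify that the image of the map lies in $PR(k,n)$. Since $z_1<z_2<\ldots<z_r$ with each $z_j\in\{1,\ldots,n\}$, the values $\alpha^{{\bf b}}_j=n+1-z_j$ form a strict partition of length $r\le k$ fitting inside $k\times n$. For $\alpha^{{\bf t}}$, strict decrease comes from the identity
\[\alpha^{{\bf t}}_s-\alpha^{{\bf t}}_{s+1}=(v_{s+1}-v_s)-|\{q:v_s<z_q<v_{s+1}\}|,\]
combined with the fact that the $z_q$'s and $v_s$'s are distinct elements of $\{1,\ldots,n\}$, which forces the right side to be strictly positive. The bounds $1\le\alpha^{{\bf t}}_s\le n$ follow from $0\le|\{q:z_q<v_s\}|\le v_s-1$ together with $v_s\le n$. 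Finally, the compatibility condition $\alpha^{{\bf t}}_{n-k}\ge l(\alpha^{{\bf b}})+1=r+1$ reduces to a counting estimate: the $z_q$'s exceeding $v_{n-k}$ together with $v_{n-k}$ itself lie in $\{v_{n-k},v_{n-k}+1,\ldots,n\}$, a set of size $n-v_{n-k}+1$, giving $r-|\{q:z_q<v_{n-k}\}|\le n-v_{n-k}$, which rearranges to the claim.

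Next, I would construct the inverse. Given $(\alpha^{{\bf t}},\alpha^{{\bf b}})\in PR(k,n)$, set $r=l(\alpha^{{\bf b}})$, recover $z_j=n+1-\alpha^{{\bf b}}_j$, and let $Z=\{z_1,\ldots,z_r\}$. The key observation for inverting the $v_s$ is that the defining formula rewrites as
\[v_s-|Z\cap[1,v_s-1]|=n+1-\alpha^{{\bf t}}_s,\]
whose left-hand side is precisely the rank of $v_s$ within the complement $\{1,\ldots,n\}\setminus Z$. Thus $v_s$ is forced to be the $(n+1-\alpha^{{\bf t}}_s)$-th smallest element of $\{1,\ldots,n\}\setminus Z$; the PR-shape inequalities $1\le\alpha^{{\bf t}}_s\le n$ and $\alpha^{{\bf t}}_{n-k}\ge r+1$ guarantee this rank lies in $[1,n-r]$, so the $v_s$'s are well-defined and distinct, and the strict decrease of $\alpha^{{\bf t}}$ yields $v_1<\ldots<v_{n-k}$. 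The $y_i$'s are then the remaining $k-r$ elements of $\{1,\ldots,n\}\setminus(Z\cup\{v_1,\ldots,v_{n-k}\})$ in increasing order. That the two maps compose to the identity in both directions is a direct substitution into the formulas.

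The main obstacle is recognizing the rewrite $v_s-|Z\cap[1,v_s-1]|=n+1-\alpha^{{\bf t}}_s$ as a ``rank in the complement'' identity; once this reinterpretation is in place, well-definedness of the forward map and invertibility both reduce to elementary combinatorial counting on $\{1,\ldots,n\}\setminus Z$.
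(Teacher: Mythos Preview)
Your argument is correct. Note, however, that the paper does not supply its own proof of this lemma: it is quoted verbatim as \cite[Lemma~1.2]{Prag} and used as a black box. So there is no ``paper's approach'' to compare against; you have simply written out a self-contained proof of a result the author imports from the literature.

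A couple of minor points worth tightening. In the strict-decrease check for $\alpha^{{\bf t}}$, the disjointness of the $z_q$'s and $v_s$'s is what lets you replace $|\{q:v_s\le z_q<v_{s+1}\}|$ by $|\{q:v_s<z_q<v_{s+1}\}|$; you use this implicitly but might state it. Likewise, the ``rank in the complement'' reading of $v_s-|Z\cap[1,v_s-1]|$ requires $v_s\notin Z$, which is automatic in the forward direction and guaranteed by construction in the inverse direction---again worth a word. With those remarks made explicit, the proof is complete.
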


Let $\alpha\in PR(k,n)$. Then $\tilde{\alpha}^{{\bf t}}:=\alpha^{{\bf t}} - (n-k,n-k-1,\ldots ,1)$ is a partition in $(n-k)\times k$.

Given $w\in W^{OG(k,2n+1)}$, let $Y=\{1,\ldots,k-r\}$, $Z = \{k-r+1,\ldots, k\}$ and $V=\{k+1,\ldots n\}$. Note that if $k+1-i\in Z$ then the $(k+1-i)$th entry of $w$ is $\overline{z_i}$, while if $k+1-i\in Y$ then the $(k+1-i)$th entry of $w$ is $y_{k+1-i}$. 

\begin{Claim}\label{claim:towardbijection}
For $1\le i\le k$, the length of the $i$th column of (the Ferrers diagram of) $\tilde{\alpha}^{{\bf t}}$ is $n-k$ if $k+1-i\in Z$, and $|\{l : y_{k+1-i}>v_l\}|$ if $k+1-i\in Y$.
\end{Claim}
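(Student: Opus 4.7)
My plan is to compute $\tilde{\alpha}^{{\bf t}}_s$ explicitly using Lemma~\ref{lemma:PragBijection}, and then read off the column lengths directly from the resulting closed form. First I would apply the Lemma to get $\alpha^{{\bf t}}_s = n+1-v_s+|\{q:z_q<v_s\}|$, and subtract the staircase $(n-k+1-s)$ to obtain
\[\tilde{\alpha}^{{\bf t}}_s = k+s-v_s+|\{q:z_q<v_s\}|.\]

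The crux is a one-line counting identity. Because $\{y_j\}\sqcup\{z_q\}\sqcup\{v_l\}=\{1,\ldots,n\}$, counting the positive integers strictly below $v_s$ from each block (noting that the $v$-block contributes exactly $s-1$ of them, since $v_s$ is the $s$th smallest $v$) gives $v_s-1 = |\{j:y_j<v_s\}|+|\{q:z_q<v_s\}|+(s-1)$. Substituting this into the previous display collapses it to
\[\tilde{\alpha}^{{\bf t}}_s = k-|\{j:y_j<v_s\}|.\]
Since the $v_s$ are increasing, $|\{j:y_j<v_s\}|$ is weakly increasing in $s$ and bounded above by $k-r$, so $\tilde{\alpha}^{{\bf t}}$ really is a partition fitting in an $(n-k)\times k$ box, as needed.

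Now the length of the $i$th column of $\tilde{\alpha}^{{\bf t}}$ is $|\{s:\tilde{\alpha}^{{\bf t}}_s\ge i\}| = |\{s:|\{j:y_j<v_s\}|\le k-i\}|$, and the desired case split falls out immediately. If $k+1-i\in Z$ (equivalently $1\le i\le r$), then $k-i\ge k-r$ and the inequality is automatic, so every $s\in\{1,\ldots,n-k\}$ contributes, giving column length $n-k$. If instead $k+1-i\in Y$ (equivalently $r+1\le i\le k$), then $k-i+1\le k-r$, and the condition $|\{j:y_j<v_s\}|\le k-i$ is equivalent to $v_s<y_{k-i+1}=y_{k+1-i}$; the column length is then $|\{l:y_{k+1-i}>v_l\}|$, as claimed.

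The only real obstacle is spotting the clean closed form $\tilde{\alpha}^{{\bf t}}_s=k-|\{j:y_j<v_s\}|$; the rest is bookkeeping, but this single identity is what translates the Praagh-style data $\alpha^{{\bf t}}$ directly into the signed-permutation data, and it drives both cases of the claim uniformly. I expect the write-up to be short, since once the identity is in hand the case analysis is purely mechanical.
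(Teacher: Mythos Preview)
Your proof is correct and follows essentially the same route as the paper: both derive the closed form $\tilde{\alpha}^{{\bf t}}_s = k - |\{j:y_j<v_s\}|$ (the paper states this equality without the intermediate counting justification you supply) and then read off the column lengths by the same case split on whether $k+1-i$ lies in $Z$ or $Y$. Your version is simply more explicit about the counting identity and the equivalence $|\{j:y_j<v_s\}|\le k-i \iff v_s<y_{k+1-i}$, which the paper leaves implicit.
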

\begin{proof}
By definition, the length of the $s$th row of $\tilde{\alpha}^{{\bf t}}$ is $k+s-v_s+|\{q : z_q<v_s\}| = k-|\{t:y_t<v_s\}|$. Then if $k+1-i\in Z$, the $i$th column has the maximal possible length $n-k$ since $k-|\{t:y_t<v_s\}|$ is never smaller than $k-|Y|$. Now suppose $k+1-i\in Y$. Then the length of the $i$th column is equal to the largest $s$ such that $y_{k+1-i}>v_s$, i.e., $|\{l : y_{k+1-i}>v_l\}|$.
\end{proof}

Let $(\tilde{\alpha}^{{\bf t}})'$ denote the conjugate partition of $\tilde{\alpha}^{{\bf t}}$. The bijection $PR(k,n)\rightarrow P(n-k,n)$ is given by $\alpha\mapsto (\tilde{\alpha}^{{\bf t}})'+\alpha^{{\bf b}}$ (see \cite[page 46]{BKT:Inventiones}.). 

\begin{Corollary}\label{cor:wordtoBKT}
$W^{OG(k,2n+1)}$ is in bijection with $P(n-k,n)$ via
\[\gamma_i=\begin{cases}
(n-k)+(n+1-z_{i}) & \text{if $k+1-i\in Z$}\\
|\{l : y_{k+1-i}>v_l\}| & \text{if $k+1-i\in Y$}.
\end{cases}\]
The Schubert variety indexed by $w\in W^{OG(k,2n+1)}$ is equal to the Schubert variety indexed by the image of $w$ in $P(n-k,n)$. 
\end{Corollary}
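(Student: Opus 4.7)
The plan is to compose the bijection of Lemma~\ref{lemma:PragBijection}, identifying $W^{OG(k,2n+1)}$ with $PR(k,n)$, with the bijection $\alpha \mapsto (\tilde{\alpha}^{{\bf t}})' + \alpha^{{\bf b}}$ from $PR(k,n)$ to $P(n-k,n)$ recalled from \cite[page 46]{BKT:Inventiones}. Since each of these two bijections is known to preserve Schubert varieties (the first by Lemma~\ref{lemma:PragBijection}, the second from \cite{BKT:Inventiones}), the composition automatically yields a Schubert-preserving bijection $W^{OG(k,2n+1)} \to P(n-k,n)$. All that remains is to verify that this composite formula matches the piecewise description of $\gamma_i$ claimed in the corollary.

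For this I would split into cases according to whether $k+1-i$ lies in $Z$ or in $Y$. When $k+1-i \in Z$, which is equivalent to $1 \leq i \leq r$, Claim~\ref{claim:towardbijection} says the $i$th column of $\tilde{\alpha}^{{\bf t}}$ has maximal length $n-k$, so $((\tilde{\alpha}^{{\bf t}})')_i = n-k$; combined with $\alpha^{{\bf b}}_i = n+1-z_i$ from Lemma~\ref{lemma:PragBijection}, addition gives $\gamma_i = (n-k) + (n+1-z_i)$, the first branch of the claimed formula. When $k+1-i \in Y$, equivalent to $i > r$, the entry $\alpha^{{\bf b}}_i$ vanishes because $l(\alpha^{{\bf b}}) = r$, and Claim~\ref{claim:towardbijection} identifies $((\tilde{\alpha}^{{\bf t}})')_i$ with $|\{l : y_{k+1-i} > v_l\}|$, yielding the second branch.

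The only real subtlety lies in bookkeeping: matching the row/column conventions of Claim~\ref{claim:towardbijection} to the conjugate partition $(\tilde{\alpha}^{{\bf t}})'$, and confirming that the $Y/Z$ dichotomy on $k+1-i$ partitions the index set $\{1,\ldots,k\}$ in a way that makes exactly one of $\alpha^{{\bf b}}_i$ and $((\tilde{\alpha}^{{\bf t}})')_i$ contribute nontrivially to each $\gamma_i$. Once these conventions are pinned down, no further computation is required, and the two formulas above assemble directly into the corollary.
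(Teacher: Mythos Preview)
Your proposal is correct and follows essentially the same approach as the paper's proof, which simply says to compose the bijection of Lemma~\ref{lemma:PragBijection} with the bijection $PR(k,n)\to P(n-k,n)$ and invoke Claim~\ref{claim:towardbijection}. You have merely unpacked the two cases of that composition more explicitly than the paper does, including the observation that $\alpha^{{\bf b}}_i=0$ when $i>r$; this is all consistent with the paper's terse argument.
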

\begin{proof}
Compose the bijection $W^{OG(k,2n+1)}\rightarrow PR(k,n)$ of Lemma~\ref{lemma:PragBijection} with the bijection $PR(k,n)\rightarrow P(n-k,n)$, using Claim~\ref{claim:towardbijection}. 
\end{proof}

\begin{Example}\label{ex:BCindex}
Let $w=(2,3,7,\overline{8}, \overline{4},1,5,6)\in W^{OG(5,17)}$. The corresponding PR shape is $\alpha = ((8,5,4),(5,1))\in PR(5,8)$. Then $\tilde{\alpha}^{{\bf t}} = (5,3,3)$ and $(\tilde{\alpha}^{{\bf t}})'=(3,3,3,1,1)$. The corresponding $\gamma\in P(3,8)$ is $\gamma = (8,4,3,1,1)$. 
\end{Example}

Now we completely categorize the RYDs, similarly to the previous section, and give an explicit description of the RYD associated to a given $w\in W^{OG(k,2n+1)}$. In the standard embedding of the $B_n$ root system into $\mathbb{R}^n$, denote the root $e_a-e_b$ by $(a,b,-)$, $e_a+e_b$ by $(a,b,+)$, and $e_a$ by $(a)$. Then the base region consists of all $(a,b,\pm)$ with $a\ge k >b$ and all $(a)$ with $a\ge k$, while the top region consists of all $(a,b,+)$ with $a>b\ge k$. Let $w(a)$ denote the number in position $a$ of $w$, ignoring whether that entry is barred.

Call a subset $S\subset \Lambda_{k}$ a {\bf $W^{OG(k,2n+1)}$-diagram} if the roots in $S$ form a lower order ideal in each region, and also satisfy a {\bf support condition}: A root $(a,b,+)$ in the top region must be in $S$ if $S$ uses more than $2n+1-2k$ roots in the $a$th and $b$th rows combined, similarly, $(a,b)$ must not be in $S$ if $S$ uses fewer than $2n+1-2k$ roots in the $a$th and $b$th rows combined (compare this to the hook condition of the previous section). Let $\Theta(k,2n+1)$ denote the set of all $W^{OG(k,2n+1)}$-diagrams. The following lemma is proved by a straightforward computation of the inversion sets.

\begin{Lemma}\label{lemma:typeBRYDs}
$\mathbb{Y}_{OG(k,2n+1)}\subseteq \Theta(k,2n+1)$.
\end{Lemma}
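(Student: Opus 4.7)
The plan is to verify the two defining conditions of $\Theta(k,2n+1)$, namely the lower-order-ideal condition in each of the base and top regions and the support condition in the top region, directly from the explicit form of the signed permutation $w\in W^{OG(k,2n+1)}$ corresponding to $\lambda\in\mathbb{Y}_{OG(k,2n+1)}$. First I would unpack the inversion criterion for each type of positive root in $\Lambda_k$: $w$ inverts $e_a-e_b$ iff $w(a)$ exceeds $w(b)$ in the signed linear order, $w$ inverts $e_a$ iff $w(a)<0$, and $w$ inverts $e_a+e_b$ iff $w(a)+w(b)<0$ (with $w$ regarded as a signed permutation acting on $\mathbb{R}^n$).

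For the lower-order-ideal condition, enumerate the covering relations inside each region. In the base region the covers shift $a$ to $a-1$ or $b$ to $b+1$ by a single step, together with the local passage between $(a,b,+)$, $(a)$, and $(a,b,-)$ when these all lie in $\Lambda_k$; the top-region staircase is analogous, with both indices now lying in $Z\cup V$. Because $w$ is monotonic on each of the three blocks $Y=\{1,\ldots,k-r\}$ (increasing), $Z=\{k-r+1,\ldots,k\}$ (negative entries of strictly decreasing magnitude), and $V=\{k+1,\ldots,n\}$ (increasing), each such cover is seen to preserve the property of being an inversion by a one-line comparison, so the inverted roots in each region form a lower order ideal.

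For the support condition on $(a,b,+)$ in the top region, I would express in closed form the number of roots in $\lambda$ that involve index $a$ or $b$, by summing contributions from the base-region roots $(a,c,\pm)$ and $(b,c,\pm)$ with $c<k$, the singleton roots $(a),(b)$, and the top-region roots $(a,c,+)$ and $(c,b,+)$ with the appropriate index in $Z\cup V$. Using the block structure of $w$, this count can be written in terms of how many values of $v_s$, $y_t$, and $z_q$ fall into various relative positions, and the condition that it exceed $2n+1-2k$ can be matched against the condition $w(a)+w(b)<0$.

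The main obstacle is case bookkeeping: the verification splits into the three cases determined by whether both of $a,b$ lie in $V$, exactly one lies in $V$ with the other in $Z$, or both lie in $Z$ (the constraint $b\ge k$ rules out the remaining possibilities). In each case the inversion counts in the $a$th and $b$th rows become concrete sums that compare directly to the threshold $2n+1-2k$, and in each case the comparison agrees with whether $(a,b,+)$ is itself an inversion of $w$. Once these cases are organized, both conditions are simultaneously verified and the inclusion $\mathbb{Y}_{OG(k,2n+1)}\subseteq\Theta(k,2n+1)$ follows.
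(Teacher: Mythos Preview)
Your overall strategy---verify the lower-order-ideal and support conditions directly from the block structure of $w\in W^{OG(k,2n+1)}$---is exactly the ``straightforward computation of the inversion sets'' the paper invokes. However, two pieces of the bookkeeping are wrong and would make the argument fail as written. First, the paper's in-text description of the regions contains a typo: in fact the base region consists of the $(a,c,\pm)$ and $(a)$ with $a\le k<c$, and the top region of the $(a,b,+)$ with $a<b\le k$ (this is what yields a $k\times(2n{+}1{-}2k)$ base rectangle and a $(k{-}1,\ldots,0)$ staircase top, and it is the convention used throughout the proof of Lemma~\ref{lemma:RYDfromperm}). Hence both indices of a top-region root lie in $\{1,\ldots,k\}=Y\cup Z$, not in $Z\cup V$; your case split for the support condition should be over $Y\times Y$, $Y\times Z$, and $Z\times Z$ (and $Z\times Y$ cannot occur since $a<b$). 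Also, the ``$a$th and $b$th rows'' in the support condition are rows of the base rectangle only; top-region roots should not be included in that count.

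Second, your inversion criterion for $e_a+e_b$ does not match the paper's convention. The paper uses the inversion set $\{\alpha>0:w\alpha<0\}$ with the linear action $w(e_i)=\pm e_{|w(i)|}$; under this, when $w(a)>0>w(b)$ one has $w(e_a+e_b)=e_{w(a)}-e_{|w(b)|}$, which is a negative root iff $w(a)>|w(b)|$, i.e.\ iff $w(a)+w(b)>0$---the opposite of your stated condition. This is exactly what Lemma~\ref{lemma:RYDfromperm} records: for $a\in Y$, $b\in Z$ the root $(a,b,+)$ is inverted precisely when $y_a>z_{k+1-b}$. With the corrected criterion and the $Y/Z$ split, the support check does go through cleanly: if both indices lie in $Z$, each base row has length $\ge n{+}1{-}k$, so the combined length exceeds $2n{+}1{-}2k$ and the root is inverted; if both lie in $Y$, each base row has length $\le n{-}k$, the combined length is below $2n{+}1{-}2k$, and the root is not inverted; in the mixed case one checks that the combined length is $\ge 2n{+}1{-}2k$ exactly when $y_a>z_{k+1-b}$ (inverted) and $\le 2n{+}1{-}2k$ exactly when $y_a<z_{k+1-b}$ (not inverted), so no violation occurs.
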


\begin{Lemma}\label{lemma:RYDfromperm}
Let $w\in W^{OG(k,2n+1)}$ and let $\lambda\in \mathbb{Y}_{OG(k,2n+1)}$ be the corresponding RYD. Then
\[\lambda^{(1)}_i=\begin{cases}
n+1-k + |\{l : z_{i} < v_l\}| & \text{if $k+1-i\in Z$}\\
|\{l : y_{k+1-i} > v_l\}| & \text{if $k+1-i\in Y$}
\end{cases}\]
and 
\[\lambda^{(2)}_i=\begin{cases}
|\{q : z_{i} < z_q\}|+|\{t : z_{i} < y_t\}| & \text{if $k+1-i\in Z$}\\
0 & \text{if $k+1-i\in Y.$}
\end{cases}\]
\end{Lemma}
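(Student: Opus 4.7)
The plan is to prove the lemma by direct computation, identifying the positive roots in each row of the base and top regions of $\Lambda_k$ and determining which ones are inversions of $w$ under its action on the root lattice.

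First I would make the row decomposition explicit. The base region (the $k \times (2n+1-2k)$ rectangle) has its $i$th row indexed by position $c := k+1-i$ of $w$; it consists of the $2n+1-2k$ positive roots $e_c - e_b$ and $e_c + e_b$ for $b \in V = \{k+1,\ldots,n\}$, together with the short root $e_c$. The $i$th row of the top region (the staircase $(k-1,k-2,\ldots,0)$) similarly corresponds to position $c = k+1-i$, and consists of the $k-i$ long roots $e_a + e_c$ for $a \in \{1,\ldots,c-1\}$. A short check using the definition of $\Lambda_k$ and the root poset order verifies that these are indeed the roots in each row.

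Next I would case-split on whether $c \in Y$ or $c \in Z$, which determines whether $w$ preserves or negates the $c$th coordinate. Suppose first that $c \in Z$, so position $c$ carries the barred entry $\overline{z_i}$. Applying $w$ to the base-region roots in row $i$: $w(e_c - e_b) = -e_{z_i} - e_{v_{b-k}}$ is always negative, contributing $n-k$ inversions; $w(e_c + e_b) = e_{v_{b-k}} - e_{z_i}$ is negative iff $v_{b-k} > z_i$, contributing $|\{l : z_i < v_l\}|$ inversions; and $w(e_c) = -e_{z_i}$ is negative, contributing $1$. The total $(n+1-k) + |\{l : z_i < v_l\}|$ matches the claimed $\lambda^{(1)}_i$. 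For the top region, each $a \in \{1,\ldots,c-1\}$ lies in either $Y$ or $Z$: for $a \in Y$, $w(e_a + e_c) = e_{y_a} - e_{z_i}$ is negative iff $y_a > z_i$, giving $|\{t : z_i < y_t\}|$ inversions; for $a \in Z$ (so $a = k+1-j$ with $j > i$), $w(e_a + e_c) = -e_{z_j} - e_{z_i}$ is always negative, giving $r-i = |\{q : z_i < z_q\}|$ inversions (using $z_1 < \cdots < z_r$). Summing yields the claimed $\lambda^{(2)}_i$.

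In the complementary case $c \in Y$, position $c$ carries the unbarred entry $y_{k+1-i}$, so $w$ preserves sign on coordinate $c$. Then $w(e_c + e_b)$ and $w(e_c)$ are always positive roots (no inversions), while $w(e_c - e_b) = e_{y_{k+1-i}} - e_{v_{b-k}}$ is negative iff $y_{k+1-i} > v_{b-k}$, yielding $\lambda^{(1)}_i = |\{l : y_{k+1-i} > v_l\}|$. In the top region, $c \le k-r$ forces every $a < c$ to lie in $Y$, so $w(e_a + e_c) = e_{y_a} + e_{y_{k+1-i}}$ is always positive and $\lambda^{(2)}_i = 0$. The main obstacle is merely notational: keeping straight the sign conventions for positive roots of type $B_n$ and the effect of barring on the $c$th coordinate. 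Once the row decomposition is fixed, each case reduces to a routine enumeration, and the only non-mechanical step is recognizing that for $a \in Z$ with $a < c$, the condition $a < c$ translates to $j > i$ on the subscripts of $z$.
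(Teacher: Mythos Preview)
Your argument is correct and follows essentially the same approach as the paper's proof: both proceed by identifying the roots in the $i$th row of each region (with row index $c = k+1-i$), splitting into the cases $c \in Z$ and $c \in Y$, and counting which roots are sent to negative roots by $w$. Your version is slightly more explicit in writing out $w(e_c \pm e_b)$ coordinatewise, whereas the paper phrases the same computation as ``the roots \ldots inverted by $w$ are those where $w(k+1-i) < w(c)$,'' but the content is identical.
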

\begin{proof}

If $k+1-i \in Z$, then all $n-k$ roots of the form $(k+1-i,c,-)$, as well as $(k+1-i)$ in the base region are inverted by $w$. The roots of the form $(k+1-i,c,+)$ in the base inverted by $w$ are exactly those where $w(k+1-i)<w(c)$, so $\lambda_i^{(1)} = n+1-k + |\{l : z_{i} < v_l\}|$. If $k+1-i\in Y$, then neither $(k+1-i)$ nor any root of the form $(k+1-i,c,+)$ in the base is inverted by $w$. The roots in the base region of the form $(k+1-i,c,-)$ inverted by $w$ are those where $w(k+1-i)>w(c)$, so $\lambda_i^{(1)} = |\{l : y_{k+1-i} > v_l\}|$.

If $k+1-i\in Z$, then the roots of the top region of the form $(a,k+1-i,+)$ inverted by $w$ are those where $a\in Z$, or $a\in Y$ and $w(a)>w(k+1-i)$. Thus $\lambda^{(2)}_i= |\{q : z_{i} < z_q\}|+|\{t : z_{i} < y_t\}|$. If $k+1-i\in Y$, then the roots of the top region of the form $(a,k+1-i,+)$ have $a\in Y$ also, and no such roots can be inverted by $w$.
\end{proof}

\begin{Example}
Let $w=(2,3,7,\overline{8}, \overline{4},1,5,6)\in W^{(OG(5,17)}$, as in Example~\ref{ex:BCindex}. The corresponding RYD is $\lambda = ((6,4,3,1,1)|(2,0,0,0,0))\in \mathbb{Y}_{OG(5,17)}$.  
\end{Example}

\begin{Lemma}\label{lemma:injectiondiagramstoBKT}
The map $f_k$ of Proposition~\ref{prop:RYDtoBKT} is an injection $\Theta(k,2n+1)\rightarrow P(n-k,n)$.
\end{Lemma}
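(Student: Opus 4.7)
The plan is to first verify that $f_k$ maps into $P(n-k,n)$, and then to show injectivity by reconstructing $(\lambda^{(1)},\lambda^{(2)})$ uniquely from $\gamma=f_k(\lambda)$.

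\textbf{Image in $P(n-k,n)$.} Since $\lambda^{(1)}$ is a partition in $k\times(2n+1-2k)$ and $\lambda^{(2)}$ is a strict partition in the staircase $(k-1,k-2,\ldots,0)$, the sum $\gamma_i:=\lambda^{(1)}_i+\lambda^{(2)}_i$ is weakly decreasing in $i$ and satisfies $\gamma_1\le(2n+1-2k)+(k-1)=2n-k$. For the $(n-k)$-strictness, assume $\gamma_i>n-k$. If $\lambda^{(2)}_i>0$ then $\lambda^{(2)}_i>\lambda^{(2)}_{i+1}$ (either by strictness when $\lambda^{(2)}_{i+1}>0$, or trivially when $\lambda^{(2)}_{i+1}=0$), so $\gamma_i>\gamma_{i+1}$. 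If instead $\lambda^{(2)}_i=0$, then $\lambda^{(2)}_{i+1}=0$ and the rightmost cell of row $i$ of the staircase---namely the top root $e_{k-i}+e_{k+1-i}$---is absent from $\lambda$; the support condition then forces $\lambda^{(1)}_i+\lambda^{(1)}_{i+1}\le 2n+1-2k$, so $\lambda^{(1)}_i=\gamma_i>n-k$ forces $\lambda^{(1)}_{i+1}\le n-k<\lambda^{(1)}_i$, whence $\gamma_i>\gamma_{i+1}$.

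\textbf{Injectivity.} I would reconstruct $\lambda$ inductively from $\gamma$ after reindexing by position in $w$. Set $c_a=\lambda^{(1)}_{k+1-a}$ (non-decreasing in $a$) and $d_b=\lambda^{(2)}_{k+1-b}$ (non-decreasing, strict once positive, with $d_1=0$ because row $k$ of the staircase has length zero), so that $\gamma^*_a:=\gamma_{k+1-a}=c_a+d_a$. My claim is that $(c_a,d_a)$ is uniquely determined by $\gamma^*_1,\ldots,\gamma^*_a$ together with the diagram constraints. The base case $a=1$ is immediate since $d_1=0$ forces $c_1=\gamma^*_1$. For the inductive step, fix the previously determined $c_{a'}$ for $a'<a$ and consider a trial value of $c_a$. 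The support condition applied to each top root $(a^*,a,+)$ with $a^*<a$ partitions $\{1,\ldots,a-1\}$, according to whether $c_{a^*}+c_a$ is greater than, equal to, or less than $2n+1-2k$, into a forced-in suffix $A(c_a)$, a free middle strip $F(c_a)$, and a forced-out prefix $B(c_a)$ (these are intervals because $c_{a^*}$ is non-decreasing in $a^*$). The lower-order-ideal structure on row $k+1-a$ of the staircase then forces $d_a\in\bigl[\,|A(c_a)|,\ |A(c_a)|+|F(c_a)|\,\bigr]$.

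The critical observation is that as $c_a$ increments by $1$, the threshold $2n+1-2k-c_a$ drops by $1$, so every element of $F(c_a)$ migrates into $A(c_a+1)$, giving $|A(c_a+1)|=|A(c_a)|+|F(c_a)|$. Consequently, the admissible range of $c_a+d_a$ at $c_a+1$ begins at $c_a+1+|A(c_a)|+|F(c_a)|$, precisely one more than where the admissible range at $c_a$ ends. Hence, as $c_a$ varies, these ranges tile consecutive integers without overlap, so $\gamma^*_a$ lies in exactly one such range, determining $c_a$ uniquely and then $d_a=\gamma^*_a-c_a$. The main obstacle I anticipate is handling the free case of the support condition, where a top root's membership in $\lambda$ is not dictated by the base row sizes alone; without the tiling observation, two distinct splittings of $\gamma^*_a$ into $c_a+d_a$ could in principle satisfy all the diagram constraints.
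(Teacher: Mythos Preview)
Your argument is correct. The image portion is essentially the paper's argument with a slightly different case split: the paper assumes both $\gamma_i>n-k$ and $\gamma_{i+1}>n-k$ and uses the support condition to force $\lambda^{(1)}_i,\lambda^{(1)}_{i+1}>n-k$ and hence $\lambda^{(2)}_i>0$, while you instead split on whether $\lambda^{(2)}_i>0$. Both routes are fine.

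For injectivity, however, your approach is genuinely different and considerably more elaborate. The paper gives a three-line contradiction: if $\lambda\neq\mu$ map to the same $\gamma$, pick the \emph{largest} $j$ with $\lambda^{(1)}_j\neq\mu^{(1)}_j$, say $\lambda^{(1)}_j>\mu^{(1)}_j$. Then every top root in row $j$ that lies in $\mu$ has base-row sum at least $2n+1-2k$ in $\mu$, hence strictly greater in $\lambda$ (the other base row index exceeds $j$, so its $\lambda^{(1)}$- and $\mu^{(1)}$-values agree), and is therefore forced into $\lambda$; thus $\lambda^{(2)}_j\ge\mu^{(2)}_j$, contradicting $\lambda^{(1)}_j+\lambda^{(2)}_j=\mu^{(1)}_j+\mu^{(2)}_j$. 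Your inductive reconstruction via the tiling of admissible ranges is sound---the key identity $|A(c_a+1)|=|A(c_a)|+|F(c_a)|$ does exactly what you say---and has the virtue of exhibiting an explicit inverse, but the paper's extremal-index trick avoids the bookkeeping entirely. Note also that your bounds $d_a\in[|A(c_a)|,|A(c_a)|+|F(c_a)|]$ follow from the support condition alone (counting forced-in and forced-out roots); the lower-order-ideal structure you invoke is not actually needed at that step.
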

\begin{proof}
Let $\lambda\in\Theta(k,2n+1)$. It is clear from the definition of a $W^{OG(k,2n+1)}$-diagram that $f_k(\lambda)$ is a partition in $k\times (2n-k)$. To see that it is $(n-k)$-strict, suppose for some $i$ that $\lambda_i^{(1)} + \lambda_i^{(2)}>n-k$ and $\lambda_{i+1}^{(1)} + \lambda_{i+1}^{(2)}>n-k$. By the support condition, this implies $\lambda_i^{(1)}>n-k$ and $\lambda_{i+1}^{(1)}>n-k$. Then the support condition also implies that $\lambda_i^{(2)}>0$, since the root $(i,i+1,+)$ must be in $\lambda$. Then since $\lambda^{(2)}$ is strict, we have $\lambda_i^{(2)}>\lambda_{i+1}^{(2)}$. Thus $\lambda_i^{(1)} + \lambda_i^{(2)}>\lambda_{i+1}^{(1)} + \lambda_{i+1}^{(2)}$, and so $f_k(\lambda)\in P(n-k,n)$.

Now suppose for a contradiction that $f_k$ is not injective, i.e., there exist $\lambda,\mu \in \Theta(k,2n+1)$ such that $\lambda\neq \mu$ but $\lambda_i^{(1)} + \lambda_i^{(2)}=\mu_i^{(1)} + \mu_i^{(2)}$ for all $1\le i\le k$. Let $j$ largest such that $\lambda_j^{(1)}\neq\mu_j^{(1)}$ (such a $j$ must exist), and assume without loss of generality that $\lambda_j^{(1)}>\mu_j^{(1)}$. Then by the support condition, every root in the top region of the form $(a,j,+)$ which is in $\mu$ is also in $\lambda$. So $\lambda_j^{(2)}\ge\mu_j^{(2)}$, which contradicts the assumption that $\lambda_j^{(1)} + \lambda_j^{(2)}=\mu_j^{(1)} + \mu_j^{(2)}$.
\end{proof}

\begin{Corollary}\label{cor:Bbijection}
$\mathbb{Y}_{OG(k,2n+1)} = \Theta(k,2n+1)$. Furthermore, $f_k:\mathbb{Y}_{OG(k,2n+1)}\rightarrow P(n-k,n)$ is a bijection.
\end{Corollary}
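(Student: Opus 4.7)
The plan is to chain together the ingredients already established in the section. By Lemma~\ref{lemma:typeBRYDs} we have the inclusion $\mathbb{Y}_{OG(k,2n+1)} \subseteq \Theta(k,2n+1)$, and by Lemma~\ref{lemma:injectiondiagramstoBKT} the map $f_k$ is an injection $\Theta(k,2n+1) \to P(n-k,n)$. Hence the restriction $f_k\colon \mathbb{Y}_{OG(k,2n+1)} \to P(n-k,n)$ is at least injective. To upgrade this to a bijection (and simultaneously force the containment of Lemma~\ref{lemma:typeBRYDs} to be equality), it suffices to show that $f_k|_{\mathbb{Y}_{OG(k,2n+1)}}$ is \emph{surjective} onto $P(n-k,n)$. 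Indeed, if the restriction is already a bijection onto $P(n-k,n)$ and the extended map on $\Theta(k,2n+1)$ is injective, then no strictly larger subset of $\Theta(k,2n+1)$ could exist, so $\mathbb{Y}_{OG(k,2n+1)} = \Theta(k,2n+1)$.

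For surjectivity, I would use Corollary~\ref{cor:wordtoBKT}, which already gives a bijection $W^{OG(k,2n+1)} \to P(n-k,n)$, together with the natural bijection between $W^{OG(k,2n+1)}$ and $\mathbb{Y}_{OG(k,2n+1)}$ (an RYD is by definition the inversion set of an element of $W^{OG(k,2n+1)}$). The concrete task is then to verify that under these two bijections the maps agree, i.e., that for $w \in W^{OG(k,2n+1)}$ with RYD $\lambda$, the partition $f_k(\lambda)$ coincides with $\gamma$ from Corollary~\ref{cor:wordtoBKT}.

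The core of this verification is a straightforward summation using Lemma~\ref{lemma:RYDfromperm}. In the case $k+1-i \in Z$,
\[
\lambda_i^{(1)} + \lambda_i^{(2)} = (n+1-k) + |\{l : z_i < v_l\}| + |\{q : z_i < z_q\}| + |\{t : z_i < y_t\}|,
\]
and since $\{y_t\} \cup \{z_q\} \cup \{v_l\} = \{1,\dots,n\}$, the three cardinalities count all entries of $\{1,\dots,n\}$ strictly greater than $z_i$, giving $n - z_i$. Thus $\lambda_i^{(1)} + \lambda_i^{(2)} = (n-k) + (n+1-z_i) = \gamma_i$. The case $k+1-i \in Y$ is immediate, since $\lambda_i^{(2)} = 0$ and $\lambda_i^{(1)} = |\{l : y_{k+1-i} > v_l\}| = \gamma_i$.

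The main obstacle is really just the bookkeeping in the preceding identification, which is not hard: one must be careful that the indexing of $\lambda^{(2)}$ by the top-region columns matches the indexing of $Z$ and $Y$ inherited from $w$, and that the $n-k$ appearing in $f_k$ has the right interpretation (coming from $(\tilde{\alpha}^{{\bf t}})'$ in the PR translation). Once that is in place, Corollary~\ref{cor:wordtoBKT} produces preimages in $\mathbb{Y}_{OG(k,2n+1)}$ for every element of $P(n-k,n)$, yielding surjectivity of $f_k|_{\mathbb{Y}_{OG(k,2n+1)}}$ and hence both conclusions of the corollary. Also, as a byproduct, the composite bijection identifies Schubert varieties indexed by $\lambda$ with those indexed by $f_k(\lambda)$, which is the final assertion of Proposition~\ref{prop:RYDtoBKT}.
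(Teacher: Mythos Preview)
Your argument is correct, but you do more work than the paper does for this corollary. The paper's proof is a pure cardinality argument: Lemma~\ref{lemma:injectiondiagramstoBKT} gives an injection $\Theta(k,2n+1)\to P(n-k,n)$; Corollary~\ref{cor:wordtoBKT} gives $|P(n-k,n)|=|W^{OG(k,2n+1)}|=|\mathbb{Y}_{OG(k,2n+1)}|$; and Lemma~\ref{lemma:typeBRYDs} gives $\mathbb{Y}_{OG(k,2n+1)}\subseteq\Theta(k,2n+1)$. Since these are finite sets, $|\Theta(k,2n+1)|\le|P(n-k,n)|=|\mathbb{Y}_{OG(k,2n+1)}|\le|\Theta(k,2n+1)|$ forces equality everywhere, and the injection $f_k$ is a bijection. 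Crucially, the paper does \emph{not} need to check here that $f_k(\lambda)$ agrees with the $\gamma$ of Corollary~\ref{cor:wordtoBKT}; that verification (your summation $|\{l:z_i<v_l\}|+|\{q:z_i<z_q\}|+|\{t:z_i<y_t\}|=n-z_i$, etc.) is exactly what the paper carries out separately in the proof of Proposition~\ref{prop:RYDtoBKT}. So your route is perfectly valid and has the advantage of simultaneously establishing Proposition~\ref{prop:RYDtoBKT}, but the paper's route is more economical for this corollary alone, deferring the explicit formula check to where it is actually needed.
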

\begin{proof}
Lemma~\ref{lemma:injectiondiagramstoBKT} gives an injection $\Theta(k,2n+1)\rightarrow P(n-k,n)$. Since Corollary~\ref{cor:wordtoBKT} establishes a bijection $ P(n-k,n)\rightarrow W^{OG(k,2n+1)}$, and by definition $W^{OG(k,2n+1)}$ is in bijection with $\mathbb{Y}_{OG(k,2n+1)}$, we have an injection $\Theta(k,2n+1)\rightarrow \mathbb{Y}_{OG(k,2n+1)}$. By Lemma~\ref{lemma:typeBRYDs}, $\mathbb{Y}_{OG(k,2n+1)} \subseteq \Theta(k,2n+1)$. Thus $\mathbb{Y}_{OG(k,2n+1)} = \Theta(k,2n+1)$, and the injection $f_k:\Theta(k,2n+1)\rightarrow P(n-k,n)$ is a bijection. 
\end{proof}

\noindent \emph{Proof of Proposition~\ref{prop:RYDtoBKT}:} By Corollary~\ref{cor:Bbijection}, we know $f_k:\mathbb{Y}_{OG(k,2n+1)}\rightarrow P(n-k,n)$ is a bijection. It remains to show $\lambda \in \mathbb{Y}_{OG(k,2n+1)}$ indexes the same Schubert variety as $f_k(\lambda)\in P(n-k,n)$.

Let $w\in W^{OG(k,2n+1)}$. Let $\lambda$ be the RYD indexing the same Schubert variety as $w$ by Lemma~\ref{lemma:RYDfromperm}, and let $\gamma$ be the element of $P(n-k,n)$ indexing the same Schubert variety as $w$ by Corollary~\ref{cor:wordtoBKT}. First suppose $k+1-i\in Z$. Then by Lemma~\ref{lemma:RYDfromperm}, $\lambda^{(1)}_i+\lambda^{(2)}_i = n+1-k + |\{l : z_{i} < v_l\}|+|\{q : z_{i} < z_q\}|+|\{t : z_{i} < y_t\}|$, which is equal to $n+1-k + (n- z_{i})$, which is equal to $\gamma_i$ by Corollary~\ref{cor:wordtoBKT}. Now suppose $k+1-i\in Y$. Then by Lemma~\ref{lemma:RYDfromperm}, $\lambda^{(1)}_i+\lambda^{(2)}_i = |\{l : y_{k+1-i} > v_l\}|$, which is equal to $\gamma_i$ by Corollary~\ref{cor:wordtoBKT}. Thus $\lambda$, $f_k(\lambda)$ index the same Schubert variety.
\qed

\subsection{Proof of Proposition~\ref{prop:RYDtoBKTD}}

Fix $k<n$. Using the same convention as in \cite{PragD}, the set $W^{OG(k,2n)}$ consists of all signed permutations that have an even number of signed entries, and are of the form 
\[(y_1,y_2,\ldots , y_{k-r}, \overline{z_r}, \overline{z_{r-1}}, \ldots \overline{z_1}, v_1, v_2, \ldots v_{n-k-1}, \widehat{v_{n-k}})\]
where $0\le r \le k$, bars denote negative entries, $y_1<y_2<\ldots <y_{k-r}$, $z_r>z_{r-1}>\ldots > z_1$, $v_1<v_2<\ldots <v_{n-k}$, and $\widehat{v_{n-k}}$ is either $v_{n-k}$ or $\overline{v_{n-k}}$, depending on the parity of $r$. Call $w$ a permutation of {\bf type I} if $\widehat{v_{n-k}}=v_{n-k}$, and {\bf type II} if $\widehat{v_{n-k}} = \overline{v_{n-k}}$. 

Given $w\in W^{OG(k,2n)}$, let $Y=\{1,\ldots,k-r\}$, $Z = \{k-r+1,\ldots, k\}$ and $V=\{k+1,\ldots n\}$. Note that if $k+1-i\in Z$ then the $(k+1-i)$th entry of $w$ is $\overline{z_i}$, while if $k+1-i\in Y$ then the $(k+1-i)$th entry of $w$ is $y_{k+1-i}$.

We now follow \cite{Tam:qcig}. Define a {\bf T-shape} to be a pair of partitions $\alpha=(\alpha^{{\bf t}},\alpha^{{\bf b}})$, where $\alpha^{{\bf b}} \subset k\times (n-1)$ is strict, $\alpha^{{\bf t}} \subset (n-k)\times k$, and $\alpha^{{\bf t}}_{n-k}\ge l(\alpha^{{\bf b}})$. Let $T(k,n)$ denote the set of all T-shapes.

The notation of \cite{Tam:qcig} differs from ours, specifically, the fork of the $D_n$ Dynkin diagram consists of nodes $1$ and $2$ in \cite{Tam:qcig} rather than $n-1$ and $n$. Translated into our notation, \cite{Tam:qcig} defines a surjection $h:W^{OG(k,2n)}\rightarrow T(k,n)$ via: 
\[\alpha^{{\bf t}}_i = k-v_i+i+ |\{j : z_j<v_i\}|\]
\[\alpha^{{\bf b}}_i = n- z_i\]
For $w\in W^{OG(k,2n)}$ such that $v_{n-k} = n$, $h$ is one-to-one. Otherwise $h$ is two-to-one, with 
\[(y_1,y_2,\ldots , y_{k-r}, \overline{n}, \overline{z_{r-1}}, \ldots \overline{z_1}, v_1, v_2, \ldots v_{n-k-1}, \widehat{v_{n-k}})\mbox{ \ and \ }\] 
\[(y_1,y_2,\ldots , y_{k-r}, n, \overline{z_{r-1}}, \ldots \overline{z_1}, v_1, v_2, \ldots v_{n-k-1}, \widehat{v_{n-k}})\] 
mapping to the same T-shape. One of these permutations has type I, the other has type II. 

Let $T'(k,n)$ be the set containing a single copy of each $\alpha\in T(k,n)$ that satisfies $|h^{-1}(\alpha)|=1$, and two copies of each $\alpha\in T(k,n)$ that satisfies $|h^{-1}(\alpha)|=2$, where one copy is declared to have type 1 and the other copy type 2. Define a map $h':W^{OG(k,2n)}\rightarrow T'(k,n)$ by letting $h'(w) = h(w)$ whenever $h$ is one-to-one, and whenever $h$ is two-to-one let $h'(w)$ be the T-shape $h(w)$ of type 1 (respectively, type 2) if $w$ is of type I (respectively, type II). Then $h'$ is a bijection. Note that the definition of \emph{type} of a T-shape used here is not the same as that used by \cite{Tam:qcig}.  

\begin{Claim}\label{claim:Dtowardbijection}
Let $w\in W^{OG(k,2n)}$ and let $h(w)=\alpha$ be the corresponding T-shape. Then for $1\le i\le k$, the length of the $i$th column of $\alpha^{{\bf t}}$ is $n-k$ if $k+1-i\in Z$, and $|\{l : y_{k+1-i}>v_l\}|$ if $k+1-i\in Y$.
\end{Claim}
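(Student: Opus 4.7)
The plan is to follow the proof of Claim~\ref{claim:towardbijection} essentially verbatim, since the defining formula $\alpha^{\mathbf{t}}_i = k-v_i+i+|\{j:z_j<v_i\}|$ for the T-shape coincides with the formula derived for $\tilde{\alpha}^{\mathbf{t}}_s$ in the PR-shape case. So the real content is already packaged; only a small rewriting step needs adjustment.

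First I would establish the identity
\[
\alpha^{\mathbf{t}}_s \;=\; k - |\{t:y_t<v_s\}|.
\]
To see this, note that among the integers $\{1,2,\ldots,v_s\}$, exactly $s$ are of the form $v_l$ (namely $v_1,\ldots,v_s$), $|\{t:y_t<v_s\}|$ are $y$-values, and $|\{q:z_q<v_s\}|$ are $z$-values. Hence $v_s = s + |\{t:y_t<v_s\}| + |\{q:z_q<v_s\}|$, and substituting into the defining formula immediately gives the compact form above.

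Next I would split into the two cases. If $k+1-i\in Z$, then $i\le r$, so $k-i\ge k-r=|Y|$. Since $|\{t:y_t<v_s\}|\le|Y|$ for every $s$, every row satisfies $\alpha^{\mathbf{t}}_s\ge i$, so the $i$th column of $\alpha^{\mathbf{t}}$ attains the maximal length $n-k$. If $k+1-i\in Y$, then $i>r$, and the condition $\alpha^{\mathbf{t}}_s\ge i$ reads $|\{t:y_t<v_s\}|\le k-i$; since the $y_t$'s are increasing, this is equivalent to $v_s<y_{k+1-i}$. The number of indices $s$ satisfying this is precisely $|\{l:y_{k+1-i}>v_l\}|$, as desired.

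I do not expect any serious obstacle; the only subtlety to be aware of is that the type~D setting introduces the decoration $\widehat{v_{n-k}}$ together with the parity condition on $r$, and one must verify that these affect only the signs, not the unsigned values $v_i,y_i,z_i$. Because $\alpha^{\mathbf{t}}$ is computed solely from those unsigned values, the argument above goes through unchanged for either choice of $\widehat{v_{n-k}}$, matching the two-to-one / one-to-one behavior of $h$ without further work.
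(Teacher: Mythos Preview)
Your proof is correct and follows the same approach as the paper, which simply says ``Identical to the proof of Claim~\ref{claim:towardbijection}.'' You have expanded the details (especially the counting argument for $\alpha^{\mathbf{t}}_s = k-|\{t:y_t<v_s\}|$ and the observation that the sign decoration $\widehat{v_{n-k}}$ plays no role), but the route is the same.
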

\begin{proof}
Identical to the proof of Claim~\ref{claim:towardbijection}.
\end{proof}

Given $\alpha\in T(k,n)$, let $(\alpha^{{\bf t}})'$ denote the conjugate partition of $\alpha^{{\bf t}}$. Now we follow \cite[pp 46--47]{BKT:Inventiones}. The bijection $T'(k,n)\rightarrow \tilde{P}(n-k,n)$ is given by $\alpha\mapsto (\alpha^{{\bf t}})'+ \alpha^{{\bf b}}$, where if $\alpha$ has type 1 (respectively, 2), its image in $\tilde{P}(n-k,n)$ has type 1 (respectively, 2). 

\begin{Corollary}\label{cor:DwordtoBKT}
$W^{OG(k,2n)}$ is in bijection with $\tilde{P}(n-k,n)$ via
\[\gamma_i=\begin{cases}
(n-k)+(n-z_{i}) & \text{if $k+1-i\in Z$}\\
|\{l : y_{k+1-i}>v_l\}| & \text{if $k+1-i\in Y$}.
\end{cases}\]
where $\tilde{\gamma}=(\gamma;0)$ if $\gamma$ has no part of size $n-k$, otherwise $\tilde{\gamma}=(\gamma;1)$ if $w$ has type I and $\tilde{\gamma}=(\gamma;2)$ if $w$ has type II. The Schubert variety indexed by $w\in W^{OG(k,2n)}$ is equal to the Schubert variety indexed by the image of $w$ in $\tilde{P}(n-k,n)$.
\end{Corollary}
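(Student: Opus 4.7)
The plan is to mirror the argument used for Corollary~\ref{cor:wordtoBKT}, substituting the type-D ingredients for the type-B ones. Specifically, I would compose the bijection $h': W^{OG(k,2n)}\rightarrow T'(k,n)$ of \cite{Tam:qcig} with the bijection $T'(k,n)\rightarrow \tilde{P}(n-k,n)$ of \cite[pp.~46--47]{BKT:Inventiones} sending $\alpha\mapsto ((\alpha^{{\bf t}})' + \alpha^{{\bf b}};\,\mathrm{type}(\alpha))$, and read off the resulting formula for $\gamma_i$.

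For the shape: Claim~\ref{claim:Dtowardbijection} immediately gives the length of the $i$th column of $\alpha^{{\bf t}}$ as $n-k$ when $k+1-i\in Z$ and $|\{l:y_{k+1-i}>v_l\}|$ when $k+1-i\in Y$. The definition $\alpha^{{\bf b}}_i=n-z_i$ is active precisely for $1\le i\le r$, which is the same condition as $k+1-i\in Z$ (and $\alpha^{{\bf b}}_i=0$ otherwise). Adding the two contributions columnwise yields the stated formula for $\gamma_i$.

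For the type decoration, the key is to verify that $\gamma$ has a part equal to $n-k$ exactly when $h$ is two-to-one at $w$, i.e. when $v_{n-k}\ne n$. If $v_{n-k}=n$, then for $1\le i\le r$ we have $\gamma_i=(n-k)+(n-z_i)>n-k$ since $z_i<n$, while for $i>r$ we have $\gamma_i=|\{l:y_{k+1-i}>v_l\}|<n-k$ since $y_{k+1-i}<n$. Conversely, if $v_{n-k}<n$, then $n$ appears as either the largest $y$ or the largest $z$; in either case the corresponding $\gamma_i$ turns out to equal $n-k$ by direct computation. Thus the type-$0$ case of $\tilde P(n-k,n)$ matches precisely the injective fibers of $h$. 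In the remaining fibers, $h'$ sends the type-I (respectively type-II) preimage to the type-1 (respectively type-2) copy of the common T-shape, which then maps to the type-1 (respectively type-2) decoration in $\tilde P(n-k,n)$, matching the stated rule.

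Schubert variety equality propagates through each bijection in the composition, as established in \cite{Tam:qcig} for $h'$ and \cite{BKT:Inventiones} for $T'(k,n)\to \tilde P(n-k,n)$. The step I anticipate being the most delicate is the bookkeeping for the type decoration, since the even-parity constraint on signed entries in $W^{OG(k,2n)}$ interacts with the two-to-one behavior of $h$ in a way that must be tracked carefully to ensure that type I is sent to type 1 and type II to type 2 consistently on both copies of the relevant T-shapes.
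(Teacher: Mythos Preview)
Your proposal is correct and takes essentially the same approach as the paper: compose $h':W^{OG(k,2n)}\to T'(k,n)$ with the bijection $T'(k,n)\to\tilde P(n-k,n)$, invoke Claim~\ref{claim:Dtowardbijection} for the shape, and then check the type decoration. Your treatment of the type decoration via the dichotomy $v_{n-k}=n$ versus $v_{n-k}\ne n$ is exactly equivalent to the paper's one-line observation that $\gamma$ has a part of size $n-k$ if and only if $z_r=n$ or $y_{k-r}=n$; you simply spell out the verification that the paper leaves as ``it is clear.''
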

\begin{proof}
Compose the bijection $W^{OG(k,2n)}\rightarrow T'(k,n)$ with the bijection $T'(k,n)\rightarrow \tilde{P}(n-k,n)$, using Claim~\ref{claim:Dtowardbijection}.
It is clear that $\gamma$ has a part of size $n-k$ if and only if either $z_r=n$ or $y_{k-r}=n$ in $w$. 
\end{proof}

\begin{Example}\label{ex:Dindex}
Let $w=(2,4,\overline{8},\overline{6},\overline{1},3,5,\overline{7})\in W^{OG(5,16)}$. The corresponding T-shape is $\alpha = ((4,3,3),(7,2,0))$ (type $2$). Then $(\alpha^{{\bf t}})'=(3,3,3,1,0)$. The corresponding $\tilde{\gamma}\in \tilde{P}(3,8)$ is $\tilde{\gamma}=((10,5,3,1,0);2)$.  
\end{Example}

Now we completely categorize the RYDs, and give an explicit description of the RYD associated to $w\in W^{OG(k,2n)}$. In the standard embedding of the $D_n$ root system into $\mathbb{R}^n$, denote the root $e_a-e_b$ by $(a,b,-)$ and $e_a+e_b$ by $(a,b,+)$. Call a subset $S\subset \Lambda_{k}$ a {\bf $W^{OG(k,2n)}$-diagram} if the roots in $S$ form a lower order ideal in each region, and also satisfy a {\bf support condition} similar to that of type B/C: a root $(a,b,+)$ in the top region must be in $S$ if $S$ uses more than $2n-2k$ roots from the $a$th and $b$th double-tailed diamonds, similarly, $(a,b,+)$ must not be in $S$ if $S$ uses fewer than $2n-2k$ roots from the $a$th and $b$th double-tailed diamonds. Let $\Theta(k,2n)$ denote the set of all $W^{OG(k,2n)}$-diagrams.
The following lemma is proved by a straightforward computation of the inversion sets.

\begin{Lemma}\label{lemma:typeDRYDs}
$\mathbb{Y}_{OG(k,2n)}\subseteq \Theta(k,2n)$.
\end{Lemma}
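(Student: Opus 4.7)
My plan is to verify the two defining conditions of $\Theta(k,2n)$ directly for the inversion set of an arbitrary $w\in W^{OG(k,2n)}$, in direct analogy with the proof strategy indicated for Lemma~\ref{lemma:typeBRYDs}. Given the signed permutation
\[w=(y_1,\ldots,y_{k-r},\overline{z_r},\ldots,\overline{z_1},v_1,\ldots,\widehat{v_{n-k}}),\]
the RYD $\lambda\in\mathbb{Y}_{OG(k,2n)}$ corresponding to $w$ is by definition the set of positive roots $\alpha$ of type $D_n$ with $w(\alpha)<0$, intersected with $\Lambda_k$. The argument is a finite case analysis unpacking how the signs and monotonicity of $w$ determine which roots are inverted.

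First I would verify that $\lambda$ forms a lower order ideal in each region. The cover relations within a single double-tailed diamond of the base region relate $e_{k+1-i}\pm e_b$ to $e_{k+1-i}\pm e_{b'}$ for neighboring $b,b'$, with the usual sign-swap at the endpoints of the diamond. Using the monotonicity $v_1<\cdots<v_{n-k}$ together with the constancy of the sign of the $(k+1-i)$-th entry of $w$ (negative if $k+1-i\in Z$, positive if $k+1-i\in Y$), one checks that if $w$ inverts a root in the diamond then it inverts every root below it. The top region is handled analogously, using the monotonicity of the $y$'s and the $z$'s. A minor subtlety is position $n$, whose sign depends on whether $w$ has type I or type II; one verifies that the relevant covers involving $e_n$ still behave correctly.

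Next I would verify the support condition by case analysis on each top root $(a,b,+)$ with $1\le a<b\le k$ according to whether positions $a$ and $b$ lie in $Y$ or in $Z$. When both lie in $Y$, we have $w(a),w(b)>0$, so $e_a+e_b$ is not inverted, and each of the two diamonds contributes at most $n-k$ inverted roots, giving a total of at most $2(n-k)$; so the ``greater than'' side of the support condition is vacuous and the ``less than'' side is compatible. When both lie in $Z$, the entries $w(a),w(b)<0$ force $e_a+e_b$ to be inverted, and each diamond already contributes the $n-k$ roots of the form $e_{\cdot}-e_c$ with $c>k$, so the total is at least $2(n-k)$, again compatible. In the mixed case, if position $a$ carries $y_p$ and position $b$ carries $\overline{z_q}$, then $e_a+e_b$ is inverted iff $y_p>z_q$, while a short count shows that the total number of inverted roots in the two relevant diamonds equals
\[(n-k)+|\{\,l:v_l>z_q\,\}|+|\{\,l:v_l<y_p\,\}|.\]
Comparing the surplus of this quantity over $2(n-k)$ to the sign of $y_p-z_q$ shows that strict inequality in the support condition always matches the inversion status of $e_a+e_b$.

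The main obstacle will be keeping the bookkeeping clean in the mixed case, together with correctly accounting for the sign of $\widehat{v_{n-k}}$ at position $n$, which alters which roots involving $e_n$ are inverted (and hence how the counts distribute between the $\pm$ halves of the $n$-th diamond component). Beyond these points the verification is mechanical once the permutation structure is unpacked.
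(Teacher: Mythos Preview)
Your proposal is correct and is precisely the kind of argument the paper has in mind: the paper gives no detailed proof, merely stating that the lemma ``is proved by a straightforward computation of the inversion sets,'' and your outline is exactly that computation carried out. Your case analysis on $Y$/$Z$ membership for the support condition, together with the surplus identity
\[
|\{l:v_l>z_q\}|+|\{l:v_l<y_p\}|-(n-k)=|\{l:v_l<y_p\}|-|\{l:v_l<z_q\}|,
\]
cleanly matches the sign of $y_p-z_q$ to strict inequality in the support condition, and the lower-order-ideal verification is likewise routine once the monotonicity of the $y$'s, $z$'s, and $v$'s is invoked.
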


\begin{Lemma}\label{lemma:DRYDfromperm}
Let $w\in W^{OG(k,2n)}$ and let $\lambda\in \mathbb{Y}_{OG(k,2n)}$ be the corresponding RYD. Then
\[\lambda^{(1)}_i=\begin{cases}
n-k + |\{l : z_{i} < v_l\}| & \text{if $k+1-i\in Z$}\\
|\{l : y_{k+1-i} > v_l\}| & \text{if $k+1-i\in Y,$}
\end{cases}\]

\[\lambda^{(2)}_i=\begin{cases}
|\{q : z_{i} < z_q\}|+|\{t : z_{i} < y_t\}| & \text{if $k+1-i\in Z$}\\
0 & \text{if $k+1-i\in Y$}
\end{cases}\]
and if $\lambda_i^{(1)}=n-k$ roots for some $i$, then $\lambda$ is assigned $\uparrow$ if $w$ is of type I and $\downarrow$ if $w$ is of type II. 
\end{Lemma}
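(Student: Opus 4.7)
The plan is to mirror the proof of Lemma~\ref{lemma:RYDfromperm}, adjusting for the two ways in which type D differs from type B: the base region of each row contains $2(n-k)$ roots rather than $2(n-k)+1$ (the simple root $e_{k+1-i}$ present in type B is absent in type D), and the two parallel middle roots in each double-tailed diamond give rise to the extra $\uparrow/\downarrow$ datum. As in the type B proof, I would fix $i\in \{1,\ldots,k\}$ and split on whether $k+1-i\in Z$ or $k+1-i\in Y$, then directly read off which roots of $\Lambda_k$ are inverted by $w$.

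For $\lambda^{(1)}_i$: if $k+1-i\in Z$, then $w(k+1-i)=\overline{z_i}<0$, so every root $e_{k+1-i}-e_c$ with $c>k$ is inverted by $w$, while $e_{k+1-i}+e_c$ is inverted iff $z_i<v_l$ where $c=k+l$. Summing contributions and noting that there is no simple-root term to add (unlike in type B) gives $\lambda^{(1)}_i=n-k+|\{l:z_i<v_l\}|$. If $k+1-i\in Y$, then $w(k+1-i)=y_{k+1-i}>0$; only the $e_{k+1-i}-e_c$ roots can be inverted, and this occurs iff $y_{k+1-i}>v_l$, yielding $\lambda^{(1)}_i=|\{l:y_{k+1-i}>v_l\}|$. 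The computation for $\lambda^{(2)}_i$ is essentially identical to the type B case: the top region consists of roots $(a,b,+)$ with $a>b\ge k$, and the inversion count under $w$ depends only on comparisons among the $z_q$'s, $y_t$'s (not on the last position), recovering the stated formula.

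For the $\uparrow/\downarrow$ assignment: when $\lambda^{(1)}_i=n-k$, the lower order ideal condition together with the support condition (from Lemma~\ref{lemma:typeDRYDs}) forces exactly one of the two parallel middle roots of the $i$th double-tailed diamond to lie in $\lambda$. These two roots are $e_{k+1-i}+e_{n-1}$ (lying above $\beta_{n-1}$) and $e_{k+1-i}+e_n$ (lying above $\beta_n$). The inverted one is determined by the sign of $w(k+1-i)+\widehat{v_{n-k}}$, which in turn depends only on ${\rm sign}(\widehat{v_{n-k}})$ once the $Z/Y$-location of $k+1-i$ is fixed. By the definition of type I ($\widehat{v_{n-k}}=v_{n-k}$) versus type II ($\widehat{v_{n-k}}=\overline{v_{n-k}}$), this yields the claimed correspondence $\uparrow\leftrightarrow$ type I, $\downarrow\leftrightarrow$ type II.

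The main obstacle is disentangling the role of the special position $n$ in the two outputs: a naive computation risks double-counting its sign, once in $\lambda^{(1)}_i$ and once in the $\uparrow/\downarrow$ flag. The resolution is to observe that the counts $|\{l:z_i<v_l\}|$ and $|\{l:y_{k+1-i}>v_l\}|$ use the unsigned values $v_l$, so the sign of $\widehat{v_{n-k}}$ is absorbed entirely into the $\uparrow/\downarrow$ data. Consistency of the assignment across all rows with $\lambda^{(1)}_i=n-k$ is automatic, since the type of $w$ is a single invariant (determined by the parity of $r$ together with ${\rm sign}(\widehat{v_{n-k}})$).
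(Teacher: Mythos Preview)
Your overall strategy matches the paper's: compute inversions row by row, treating the base and top regions separately, then read off the $\uparrow/\downarrow$ from which of the two fork roots is inverted. However, there are two concrete errors that prevent the argument from going through as written.

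First, your computation of $\lambda^{(1)}_i$ is only valid for type~I. You assert that when $k+1-i\in Z$, ``every root $e_{k+1-i}-e_c$ with $c>k$ is inverted by $w$.'' For type~II the last entry is $\overline{v_{n-k}}$, so $(k+1-i,n,-)$ is inverted only when $z_i>v_{n-k}$, not unconditionally; conversely $(k+1-i,n,+)$ is now always inverted. The miracle is that these two effects exactly compensate and the total count remains $n-k+|\{l:z_i<v_l\}|$, but this requires a separate bookkeeping (as the paper does) rather than the blanket statement you made. Your final paragraph gestures at this (``the sign of $\widehat{v_{n-k}}$ is absorbed entirely into the $\uparrow/\downarrow$ data'') but does not actually verify that the number of inverted base roots is unchanged in type~II; that is the substance of the argument, and it is missing.

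Second, you have misidentified the two incomparable middle roots of the $i$th double-tailed diamond. They are $(k+1-i,n,-)=e_{k+1-i}-e_n$ (above $\beta_{n-1}$) and $(k+1-i,n,+)=e_{k+1-i}+e_n$ (above $\beta_n$), not $e_{k+1-i}+e_{n-1}$ and $e_{k+1-i}+e_n$: the root $e_{k+1-i}+e_{n-1}$ lies above \emph{both} $\beta_{n-1}$ and $\beta_n$ in the root poset. With the correct pair, the determination of which one is inverted is exactly the type~I/type~II dichotomy, and your criterion ``sign of $w(k+1-i)+\widehat{v_{n-k}}$'' does not apply cleanly (for instance, when $k+1-i\in Y$ and $w$ is type~II with $y_{k+1-i}>v_{n-k}$, it is $(k+1-i,n,+)$ that is inverted, not $(k+1-i,n,-)$). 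Fixing the middle roots and carrying out the type~II count explicitly, as the paper does, will complete the proof along the lines you intended.
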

\begin{proof}

\noindent\emph{($w$ is of type I):} If $k+1-i \in Z$, then all $n-k$ roots $(k+1-i,c,-)$ in the base are inverted by $w$. The roots of the form $(k+1-i,c,+)$ in the base inverted by $w$ are exactly those where $w(k+1-i)<w(c)$, so $\lambda_i^{(1)} = n-k + |\{l : z_{i} < v_l\}|$. If $k+1-i\in Y$, then no roots of the form $(k+1-i,c,+)$ in the base are inverted by $w$. The roots in the base of the form $(k+1-i,c,-)$ inverted by $w$ are those where $w(k+1-i)>w(c)$, so $\lambda_i^{(1)} = |\{l : y_{k+1-i} > v_l\}|$.

If $k+1-i\in Z$, then the roots of the top region of the form $(a,k+1-i,+)$ inverted by $w$ are those where either $a\in Z$, or $a\in Y$ and $w(a)>w(k+1-i)$. Thus $\lambda^{(2)}_i= |\{q : z_{i} < z_q\}|+|\{t : z_{i} < y_t\}|$. If $k+1-i\in Y$, then the roots of the top region of the form $(a,k+1-i,+)$ have $a\in Y$ also, and no such roots can be inverted by $w$.

\noindent\emph{($w$ is of type II):} If $k+1-i \in Z$, then all $n-k-1$ roots $(k+1-i,c,-)$ for $c<n$ in the base are inverted by $w$, and also $(k+1-i,n,+)$ is inverted by $w$. The number of remaining roots of the $i$th double-tailed diamond inverted by $w$ is
\[|\{l<n-k: z_{i}<v_l\}| + 
\begin{cases}
1 & \text{if $z_{i}< v_{n-k}$}\\
0 & \text{if $z_{i}> v_{n-k}$} 
\end{cases} \]
(the first summand is the number of $(k+1-i,c,+)$ for $c<n$ inverted, the second is whether $(k+1-i,n,-)$ is inverted). Thus $\lambda_i^{(1)} = n-k + |\{l : z_{i} < v_l\}|$. If $k+1-i\in Y$, then no roots of the form $(k+1-i,c,+)$ for $c<n$ in the base are inverted by $w$, and also $(k+1-i,n,-)$ is not inverted by $w$. Thus the number of roots of the $i$th double-tailed diamond inverted by $w$ is
\[|\{l<n-k : y_{k+1-i} > v_l\}| + 
\begin{cases}
1 & \text{if $y_{k+1-i}> v_{n-k}$}\\
0 & \text{if $y_{k+1-i}< v_{n-k}$} 
\end{cases} \]
(the first summand is the number of $(k+1-i,c,-)$ for $c<n$ inverted, the second is whether $(k+1-i,n,+)$ is inverted). Thus $\lambda_i^{(1)} = |\{l : y_{k+1-i} > v_l\}|$.

Since the last co-ordinate of any root of the top region is zero, it is irrelevant whether the last entry of $w$ is barred. Hence for $\lambda_i^{(2)}$, the statement for the top region follows by the same argument as for type I permutations.

Finally, if $\lambda_i^{(1)}=n-k$ for some $i$, then $\lambda$ uses either $(k+1-i,n,-)$ (above $\beta_{n-1}$) or $(k+1-i,n,+)$ (above $\beta_{n}$) but not both. If $\lambda$ uses the former but not the latter then the last entry of $w$ must be unbarred (i.e., $w$ is of type I), and if it uses the latter but not the former then similarly $w$ must be of type II. Thus $\lambda$ is assigned $\uparrow$ (respectively, $\downarrow$) if and only if $\lambda_i^{(1)}=n-k$ for some $i$ and $w$ is of type I (respectively, type II). 
\end{proof}

\begin{Example}
Let $w=(2,4,\overline{8},\overline{6},\overline{1},3,5,\overline{7})\in W^{OG(5,16)}$, as in Example~\ref{ex:Dindex}. The corresponding RYD is $\lambda = ((6,4,3,1,0)|(4,1,0,0,0))^\downarrow\in \mathbb{Y}_{OG(5,16)}$. 
\end{Example}

\begin{Lemma}\label{lemma:DinjectiondiagramstoBKT}
The map $F_k$ of Proposition~\ref{prop:RYDtoBKTD} is an injection $\Theta(k,2n)\rightarrow \tilde{P}(n-k,n)$.
\end{Lemma}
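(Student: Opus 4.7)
The plan is to adapt the proof of Lemma~\ref{lemma:injectiondiagramstoBKT}, with the type data of $\tilde{P}(n-k,n)$ providing the only essentially new ingredient. First, for $\lambda \in \Theta(k,2n)$ I would verify that $F_k(\lambda) \in \tilde{P}(n-k,n)$. The base double-tailed diamonds carry $2n-2k$ roots each and the top staircase has row lengths $k-1, k-2, \ldots, 0$, so $\gamma_i := \lambda^{(1)}_i + \lambda^{(2)}_i$ satisfies $\gamma_i \le 2n-2k + (k-i) \le 2n-1-k$, hence $\gamma$ fits in $k \times (2n-1-k)$; weak decrease of $\gamma$ is immediate from weak decrease of $\lambda^{(1)}$ and $\lambda^{(2)}$.

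The main structural input throughout is the type D support condition: $(i,i+1,+) \in \lambda$ whenever $\lambda^{(1)}_i + \lambda^{(1)}_{i+1} > 2n-2k$, and $(i,i+1,+) \notin \lambda$ whenever $\lambda^{(1)}_i + \lambda^{(1)}_{i+1} < 2n-2k$. For $(n-k)$-strictness, suppose $\gamma_i > n-k$ and $\gamma_{i+1} > n-k$. I would first rule out $\lambda^{(1)}_i < n-k$ (and symmetrically $\lambda^{(1)}_{i+1} < n-k$): this forces $\lambda^{(2)}_i > 0$, hence $(i,i+1,+)\in \lambda$, hence $\lambda^{(1)}_i + \lambda^{(1)}_{i+1} \ge 2n-2k$, contradicting both entries being $<n-k$. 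With $\lambda^{(1)}_i, \lambda^{(1)}_{i+1} \ge n-k$, either $\lambda^{(1)}_i > \lambda^{(1)}_{i+1}$ gives $\gamma_i > \gamma_{i+1}$ at once, or $\lambda^{(1)}_i = \lambda^{(1)}_{i+1}$ forces $\lambda^{(2)}_i > 0$ (directly from $\gamma_i > n-k$ when this common value is $n-k$, or via the support condition when it exceeds $n-k$), and strictness of $\lambda^{(2)}$ then gives $\lambda^{(2)}_i > \lambda^{(2)}_{i+1}$, hence $\gamma_i > \gamma_{i+1}$.

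The main obstacle, and the only genuinely new piece beyond the type B/C case, is matching the type value in $\tilde{P}(n-k,n)$ with the $\uparrow/\downarrow$ annotation on $\lambda$. I would establish that $\gamma$ has a part equal to $n-k$ if and only if $\lambda^{(1)}_j = n-k$ for some $j$, which is exactly the condition triggering a $\uparrow$ or $\downarrow$ assignment. For the forward direction, $\gamma_j = n-k$ with $\lambda^{(2)}_j > 0$ would force $\lambda^{(1)}_j + \lambda^{(1)}_{j+1} \ge 2n-2k$ via the support condition, hence $\lambda^{(1)}_j \ge n-k$, contradicting $\lambda^{(1)}_j = n-k-\lambda^{(2)}_j < n-k$; so $\lambda^{(2)}_j = 0$ and $\lambda^{(1)}_j = n-k$. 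Conversely, at the largest $j$ with $\lambda^{(1)}_j = n-k$, the inequality $\lambda^{(1)}_j + \lambda^{(1)}_{j+1} < 2n-2k$ (using $\lambda^{(1)}_{j+1} < n-k$, or vacuously when $j = k$) forces $\lambda^{(2)}_j = 0$, whence $\gamma_j = n-k$. Since $F_k$ sends $\uparrow$ to type $1$ and $\downarrow$ to type $2$, the type values then agree.

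Finally, injectivity follows by the same argument as Lemma~\ref{lemma:injectiondiagramstoBKT}: assuming $F_k(\lambda) = F_k(\mu)$ with $\lambda \neq \mu$, take the largest $j$ on which $\lambda^{(1)}$ and $\mu^{(1)}$ disagree (say $\lambda^{(1)}_j > \mu^{(1)}_j$), and note that any top-region root $(a,j,+) \in \mu$ satisfies $\mu^{(1)}_a + \mu^{(1)}_j \ge 2n-2k$, hence $\lambda^{(1)}_a + \lambda^{(1)}_j > 2n-2k$ (using $\lambda^{(1)}_a = \mu^{(1)}_a$ for $a > j$), so $(a,j,+) \in \lambda$; this forces $\lambda^{(2)}_j \ge \mu^{(2)}_j$, contradicting equality of the $\gamma_j$. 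Agreement of the type component of $F_k(\lambda)$ and $F_k(\mu)$ then enforces identical $\uparrow/\downarrow$ (or neither) assignments on $\lambda$ and $\mu$, completing the argument.
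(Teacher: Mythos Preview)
Your proposal is correct and follows essentially the same three-step structure as the paper's proof: verify the image lies in $\tilde{P}(n-k,n)$ (partition shape and $(n-k)$-strictness), check well-definedness of the type datum by showing $\gamma$ has a part equal to $n-k$ iff $\lambda^{(1)}$ does, and then run the injectivity argument of Lemma~\ref{lemma:injectiondiagramstoBKT}. The only stylistic difference is that in several places you route the support-condition argument through the single root ``$(i,i+1,+)$'' together with the lower-order-ideal (chain) structure of the top region, whereas the paper applies the support condition directly to \emph{all} top-region roots contributing to $\lambda^{(2)}_i$ at once (e.g.\ ``$\lambda^{(1)}_l<n-k$ for all $l>i$, so by the support condition $\lambda^{(2)}_i=0$''); both are valid and yield the same conclusions.
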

\begin{proof}
Let $\lambda\in \Theta(k,2n)$. It is clear from the definition of a $W^{OG(k,2n)}$-diagram that for $\tilde{\gamma}=F_k(\lambda)$, $\gamma$ is a partition in $k\times (2n-1-k)$. First we show $\gamma$ is $(n-k)$-strict. Suppose for some $i$ that $\lambda_i^{(1)} + \lambda_i^{(2)}>n-k$ and $\lambda_{i+1}^{(1)} + \lambda_{i+1}^{(2)}>n-k$. By the support condition, this implies $\lambda_i^{(1)}\ge n-k$ and $\lambda_{i+1}^{(1)}\ge n-k$. If the first inequality is strict then the support condition also implies that $\lambda_i^{(2)}>0$ since the root $(i,i+1,+)$ must be in $\lambda$, while if it is an equality then we also have $\lambda_i^{(2)}>0$ since $\lambda_i^{(1)} + \lambda_i^{(2)}>n-k$. Since $\lambda^{(2)}$ is a strict partition, this implies $\lambda_i^{(2)}>\lambda_{i+1}^{(2)}$, whence $\lambda_i^{(1)} + \lambda_i^{(2)}>\lambda_{i+1}^{(1)} + \lambda_{i+1}^{(2)}$.

Next, to demonstrate that $F_k$ is well-defined, we show that $\lambda^{(1)}$ has a row of length $n-k$ if and only if $\gamma$ has a row of length $n-k$. Suppose $\lambda^{(1)}$ has a row of length $n-k$, and let $i$ be largest such that $\lambda^{(1)}_i=n-k$. Then $\lambda^{(1)}_{l}<n-k$ for all $l>i$, and thus by the support condition $\lambda^{(2)}_i= 0$. So $\gamma_i=n-k$.
Now suppose $\lambda^{(1)}$ has no row of length $n-k$, and consider an arbitrary row $\lambda^{(1)}_i$ of $\lambda^{(1)}$. If $\lambda^{(1)}_i>n-k$ then clearly $\gamma_i>n-k$. If $\lambda^{(1)}_i<n-k$ then $\lambda^{(1)}_l<n-k$ for all $l>i$, and then by the support condition $\lambda^{(2)}_i=0$. Hence $\gamma_i = \lambda^{(1)}_i<n-k$.

The argument that $F_k$ is injective is then similar to that of Lemma~\ref{lemma:injectiondiagramstoBKT}.
\end{proof}

\begin{Corollary}\label{cor:Dbijection}
$\mathbb{Y}_{OG(k,2n)}=\Theta(k,2n)$. Furthermore, $F_k:\mathbb{Y}_{OG(k,2n)}\rightarrow \tilde{P}(n-k,n)$ is a bijection.
\end{Corollary}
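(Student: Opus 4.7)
The plan is to mirror exactly the strategy used in the proof of Corollary~\ref{cor:Bbijection}. All of the ingredients needed are already in place: Lemma~\ref{lemma:DinjectiondiagramstoBKT} provides an injection $F_k\colon \Theta(k,2n)\rightarrow \tilde{P}(n-k,n)$, Corollary~\ref{cor:DwordtoBKT} provides a bijection $W^{OG(k,2n)}\rightarrow \tilde{P}(n-k,n)$, and by construction (as described in the discussion just before Example~\ref{ex:Dpicture} and in the indexing convention stated in the introduction) $W^{OG(k,2n)}$ is in bijection with $\mathbb{Y}_{OG(k,2n)}$.

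First I would assemble these maps as a chain. Composing the injection of Lemma~\ref{lemma:DinjectiondiagramstoBKT} with the inverse of the bijection from Corollary~\ref{cor:DwordtoBKT} gives an injection $\Theta(k,2n)\hookrightarrow W^{OG(k,2n)}$, which after composing with the bijection $W^{OG(k,2n)}\to\mathbb{Y}_{OG(k,2n)}$ yields an injection $\Theta(k,2n)\hookrightarrow \mathbb{Y}_{OG(k,2n)}$. Then I would invoke Lemma~\ref{lemma:typeDRYDs}, which asserts the reverse containment $\mathbb{Y}_{OG(k,2n)}\subseteq \Theta(k,2n)$. Finiteness of both sets, combined with the existing injection in the opposite direction, forces the containment to be an equality, hence $\mathbb{Y}_{OG(k,2n)}=\Theta(k,2n)$.

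Once the two sets are identified, the map $F_k$ is the already-known injection $\Theta(k,2n)\to\tilde{P}(n-k,n)$ of Lemma~\ref{lemma:DinjectiondiagramstoBKT}, viewed as a map from $\mathbb{Y}_{OG(k,2n)}$. To promote it from injection to bijection, I would observe that $|\mathbb{Y}_{OG(k,2n)}|=|W^{OG(k,2n)}|=|\tilde{P}(n-k,n)|$ by the bijection of Corollary~\ref{cor:DwordtoBKT}, so an injection between two finite sets of equal cardinality is automatically a bijection.

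I do not anticipate any real obstacle here: all the work has been absorbed into the preceding lemmas (in particular, verifying the support condition and $(n-k)$-strictness, and ruling out the collision of the two type-1/type-2 preimages, all happen inside Lemma~\ref{lemma:DinjectiondiagramstoBKT} and Corollary~\ref{cor:DwordtoBKT}). The only conceptual check I would be careful about is that the bijection $W^{OG(k,2n)}\to\mathbb{Y}_{OG(k,2n)}$ coming from Lemma~\ref{lemma:DRYDfromperm} is indeed a bijection rather than merely a well-defined map; but this follows from the general definition of RYDs as inversion sets, which is manifestly injective, combined with the fact that both sides have the same cardinality (the number of Schubert varieties of $OG(k,2n)$).
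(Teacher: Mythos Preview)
Your proposal is correct and follows essentially the same approach as the paper's proof, which simply states it is identical to the proof of Corollary~\ref{cor:Bbijection} with Lemmas~\ref{lemma:typeDRYDs}, \ref{lemma:DinjectiondiagramstoBKT} and Corollary~\ref{cor:DwordtoBKT} substituted in. Your added remarks on finiteness and on why $W^{OG(k,2n)}\to\mathbb{Y}_{OG(k,2n)}$ is a bijection are harmless elaborations of points the paper leaves implicit.
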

\begin{proof}
Identical to the proof of Corollary~\ref{cor:Bbijection}, using instead Lemmas~\ref{lemma:typeDRYDs}, \ref{lemma:DinjectiondiagramstoBKT} and Corollary~\ref{cor:DwordtoBKT}.
\end{proof}

\noindent \emph{Proof of Proposition~\ref{prop:RYDtoBKTD}:} By Corollary~\ref{cor:Dbijection}, we know $F_k$ is a bijection $\mathbb{Y}_{OG(k,2n)}\rightarrow \tilde{P}(n-k,n)$.
It remains to show the image of $\lambda$ indexes the same Schubert variety as $\lambda$.

Let $w\in W^{OG(k,2n)}$. Let $\lambda$ be the RYD indexing the same Schubert variety as $w$ by Lemma~\ref{lemma:DRYDfromperm}, and let $\tilde{\gamma}=(\gamma;{\tt type}(\gamma))$ be the element of $\tilde{P}(n-k,n)$ indexing the same Schubert variety as $w$ by Corollary~\ref{cor:DwordtoBKT}. First suppose $k+1-i\in Z$. Then by Lemma~\ref{lemma:DRYDfromperm}, $\lambda^{(1)}_i+\lambda^{(2)}_i = n-k + |\{l : z_{i} < v_l\}|+|\{q : z_{i} < z_q\}|+|\{t : z_{i} < y_t\}|$, which is equal to $n-k + (n- z_{i})$, which is equal to $\gamma_i$ by Corollary~\ref{cor:DwordtoBKT}. Now suppose $k+1-i\in Y$. By Lemma~\ref{lemma:DRYDfromperm}, $\lambda^{(1)}_i+\lambda^{(2)}_i =|\{l : y_{k+1-i} > v_l\}|$, which is equal to $\gamma_i$ by Corollary~\ref{cor:DwordtoBKT}.

By the proof of Lemma~\ref{lemma:DinjectiondiagramstoBKT}, either $\lambda^{(1)}, \gamma$ both have a row of length $n-k$ or both do not. If they do, then if $w$ is of type I, $\lambda$ is assigned $\uparrow$ and $\gamma$ is of type 1, while if $w$ is of type II, $\lambda$ is assigned $\downarrow$ and $\gamma$ is of type 2. Thus $\lambda$, $F_k(\lambda)$ index the same Schubert variety.
\qed

\section{Proof of Theorem~\ref{Thm:Pieriagreement}(I)}

We follow \cite[pg. 3-5]{BKT:Inventiones}. The Schubert varieties of $LG(2,2n)$ are indexed by the set $P(n-2,n)$ of $(n-2)$-strict partitions inside a $2\times (2n-2)$ rectangle. The {\bf Pieri classes} of \cite{BKT:Inventiones} are those indexed by $\gamma=(p,0)\in P(n-2,n)$. Denote these classes by $\sigma_p$. 

Fix an integer $p\in [1,2n-2]$, and suppose $\gamma, \delta\in P(n-2,n)$ with $|\delta|=|\gamma|+p$. Call a box of $\delta$ a $\delta$-box, a box of $\gamma$ a $\gamma$-box, a box of $\delta$ that is not in $\gamma$ a $(\delta\setminus\gamma)$-box, and  a box of $\gamma$ that is not in $\delta$ a $(\gamma\setminus\delta)$-box. We say the box in row $r$ and column $c$ of $\gamma$ is {\bf related} to the box in row $r'$ and column $c'$ if $|c-(n-1)|+r = |c'-(n-1)|+r'$. Then there is a relation $\gamma \rightarrow \delta$ if $\delta$ can be obtained by removing a vertical strip from the first $n-2$ columns of $\gamma$ and adding a horizontal strip to the result, such that
\begin{enumerate}
\item Each $\gamma$-box in the first $n-2$ columns having no $\delta$-box below it is related to at most one $(\delta\setminus\gamma)$-box.
\item Any $(\gamma\setminus\delta)$-box and the box above it must each be related to exactly one $(\delta \setminus \gamma)$-box, and these $(\delta \setminus \gamma)$-boxes must all lie in the same row.
\end{enumerate}
If $\gamma \rightarrow \delta$, let $\mathbb{A}$ be the set of $(\delta \setminus \gamma)$-boxes in columns $n-1$ through $2n-2$ which are \emph{not} mentioned in (1) or (2). Define two boxes of $\mathbb{A}$ to be {\bf connected} if they share at least a vertex. Then define $N(\gamma,\delta)$ to be the number of connected components of $\mathbb{A}$ that do not use a box of the $(n-1)$th column. 

Then the specialization of the Pieri rule of \cite[Theorem 1.1]{BKT:Inventiones} to the coadjoint $LG(2,2n)$ is

\begin{Theorem}[\cite{BKT:Inventiones}]\label{TheoremBKTPieriC}(Pieri rule for LG(2,2n))
For any $\gamma \in P(n-2,n)$ and integer $p \in [1,2n-2]$,
\begin{equation}\nonumber
\sigma_p \cdot \sigma_{\gamma} = \sum_{\delta} 2^{N(\gamma,\delta)}\sigma_{\delta}
\end{equation}
where the sum is over all $\delta \in P(n-2,n)$ with $\gamma \rightarrow \delta$. 
\end{Theorem}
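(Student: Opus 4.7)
The plan is to apply $f_2$ from Proposition~\ref{prop:RYDtoBKT} to both sides of the claimed identity and verify that the RYD rule $\star$ from \cite[Theorem 4.1]{Searles.Yong}, when one factor is a Pieri RYD, transports term-by-term (including multiplicities) to the BKT Pieri rule of Theorem~\ref{TheoremBKTPieriC}. First I would pin down the Pieri RYDs explicitly. Under $f_2$, the class $\sigma_p$ corresponds to $(p,0)\in P(n-2,n)$, so the Pieri RYD $\lambda$ satisfies $\lambda^{(1)}_2+\lambda^{(2)}_2 = 0$, forcing $\lambda^{(2)} = (0,0)$ and $\lambda^{(1)}_2=0$; thus $\lambda = ((p,0)|(0,0))$ when $p \le n-2$, and when $p>n-2$ the support condition of Section~3 forces $\lambda = ((p-1,0)|(1,0))$ (since $\lambda^{(1)}_1>n-2$ with $\lambda^{(1)}_2=0$ and $\lambda^{(2)}_2=0$ requires $\lambda^{(2)}_1=1$). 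This explicit description is what allows comparison with the BKT rule.

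Next I would unpack both sides. On one side, writing out $\lambda \star \mu$ using \cite[Theorem 4.1]{Searles.Yong} gives a sum $\sum_\nu c^\nu_{\lambda,\mu}\, \nu$ over RYDs $\nu \in \mathbb{Y}_{OG(2,2n+1)}$ satisfying the conditions of that theorem, with explicit coefficients $c^\nu_{\lambda,\mu}$. On the other side, writing $\gamma := f_2(\mu)$ and expanding $\sigma_p\cdot\sigma_\gamma$ using Theorem~\ref{TheoremBKTPieriC} gives $\sum_\delta 2^{N(\gamma,\delta)}\sigma_\delta$. Via Proposition~\ref{prop:RYDtoBKT}, each $\nu$ on the RYD side pushes forward to $f_2(\nu) = (\nu^{(1)}_1+\nu^{(2)}_1,\, \nu^{(1)}_2+\nu^{(2)}_2) \in P(n-2,n)$, so the task reduces to two checks: (a) the set of $\nu$ in the $\star$-expansion bijects under $f_2$ with the set of $\delta$ satisfying $\gamma\to\delta$, and (b) the RYD coefficient $c^\nu_{\lambda,\mu}$ equals $2^{N(\gamma,\delta)}$ with $\delta = f_2(\nu)$.

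For (a), I would translate the geometric box-moves defining $\gamma\to\delta$ (removal of a vertical strip from columns $1,\ldots,n-2$ together with addition of a horizontal strip, constrained by the relatedness condition) into moves on the pair $(\nu^{(1)}|\nu^{(2)})$. The relatedness condition in BKT corresponds precisely to the interaction between the base region (a $2\times(2n-3)$ rectangle, housing $\nu^{(1)}$) and the top region (the staircase $(1,0)$, housing $\nu^{(2)}$) in $\Lambda_2 \subset \Omega_{SO_{2n+1}}$: boxes from the last $n-1$ columns of $\gamma$ correspond to roots whose base/top membership is recorded jointly by $\nu^{(1)}_i+\nu^{(2)}_i$, which is why the BKT relation mixes vertical and horizontal strips. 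I would run through the cases $p\le n-2$ and $p > n-2$ separately, using the explicit description of $\lambda$ above, and check each BKT condition (1) and (2) matches a corresponding condition in the $\star$ rule.

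For (b), I expect matching the multiplicities $2^{N(\gamma,\delta)}$ to be the main obstacle. The factor of $2$ in the BKT rule arises from connected components in $\mathbb{A}$ avoiding the $(n-1)$st column; on the RYD side, these correspond to independent choices of whether to include certain ``top region'' roots versus base-region roots whose sums $\nu^{(1)}_i+\nu^{(2)}_i$ are fixed. Concretely, each such component admits two RYD realizations producing the same $\delta = f_2(\nu)$, and the $\star$ rule should assign coefficient $1$ to each, so that their contributions add to $2^{N(\gamma,\delta)}$ after applying $f_2$ (which is many-to-one on the underlying counting but one-to-one on $\mathbb{Y}_{OG(2,2n+1)}$). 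I would make this precise by checking, in each of the cases above, that the number of valid $\nu$ mapping to a given $\delta$ under $f_2$ (weighted by $c^\nu_{\lambda,\mu}$) equals $2^{N(\gamma,\delta)}$, using the fact that $f_2$ collapses exactly the degrees of freedom encoded by the components of $\mathbb{A}$. This case analysis finishes the proof.
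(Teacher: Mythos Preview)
This theorem is not proved in the paper at all: it is the Pieri rule of Buch--Kresch--Tamvakis, quoted verbatim from \cite{BKT:Inventiones} (see the citation in the theorem header) and used as an \emph{input}. The logical flow is the reverse of what you propose. The paper takes Theorem~\ref{TheoremBKTPieriC} as established, and then in Section~4 proves Theorem~\ref{Thm:Pieriagreement}(I) by showing that the $\star$-product of \cite[Theorem~4.1]{Searles.Yong} agrees with it in the Pieri case; that agreement, together with associativity of $\star$, is what certifies the $\star$-rule. Your plan would make the argument circular: you invoke the $\star$-rule as known in order to deduce Theorem~\ref{TheoremBKTPieriC}, but the correctness of $\star$ is only established \emph{via} Theorem~\ref{TheoremBKTPieriC}.

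There is also a technical slip in your part~(b). You write that ``each such component admits two RYD realizations producing the same $\delta=f_2(\nu)$'', and that the powers of $2$ arise because $f_2$ is many-to-one on a counting problem. But Proposition~\ref{prop:RYDtoBKT} states that $f_2$ is a bijection $\mathbb{Y}_{OG(2,2n+1)}\to P(n-2,n)$, so distinct RYDs cannot map to the same $\delta$. In the paper's actual comparison (Section~4, Case~3), the factor of $2$ on the RYD side comes directly from the coefficients in rule~(B) of Definition~\ref{def:LGproduct}: when $|\lambda|\le 2n-3$ and $|\lambda|+p>2n-3$, the expansion of $\oalpha_p\star\olambda$ already carries explicit coefficients $1,2,\ldots,2,1$, and the work is to match those against $2^{N(\gamma,\delta)}$ term by term using Lemmas~\ref{lemma:specialPiericaseC}, \ref{lemma:bisectionC}, \ref{lemma:threshcptsurvivesC} and Corollary~\ref{cor:bisectionC}. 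So even if the direction of implication were right, your mechanism for the multiplicities is not.
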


Let $(r:c)$ denote the box in row $r$, column $c$ of $2\times (2n-2)$. Let $L$ denote the first $n-2$ columns of $2\times(2n-2)$ and $R$ the latter $n$ columns. Given $\gamma,\delta\in P(n-2,n)$ with $|\delta|=|\gamma|+p$, let $\mathbb{D}_1$ denote the set of $(\delta\setminus\gamma)$-boxes in row 1 of $R$, and $\mathbb{D}_2$ the set of $(\delta\setminus\gamma)$-boxes in row 2 of $R$. Let $\mathbb{D}=\mathbb{D}_1\cup\mathbb{D}_2$. By definition, both $\mathbb{D}_1$, $\mathbb{D}_2$ are connected and

\begin{Lemma}\label{lemma:mathbbDC}
$\mathbb{D}_1 = \begin{cases}
\{(1:c) : \gamma_1+1\le c \le \delta_1\} & \text{if $\gamma_1>n-2$} \\
\{(1:c) : n-1\le c\le \delta_1\} & \text{if $\gamma_1\le n-2$}
\end{cases}$ \qquad and \qquad

$\mathbb{D}_2=\begin{cases}
\{(2:c) : \gamma_2+1\le c\le \delta_2\} & \text{if $\gamma_2>n-2$} \\
\{(2:c) : n-1\le c \le \delta_2\} & \text{if $\gamma_2\le n-2$.}
\end{cases}$
\end{Lemma}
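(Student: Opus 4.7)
\begin{poetry}
\emph{Proof proposal for Lemma~\ref{lemma:mathbbDC}:}
\end{poetry}

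The key observation is that the definition of the relation $\gamma \rightarrow \delta$ is asymmetric between $L$ (columns $1$ through $n-2$) and $R$ (columns $n-1$ through $2n-2$): the vertical strip is removed only from $L$, while the horizontal strip is added afterwards to the result. So in $R$, the only way $\gamma$ and $\delta$ can differ is by the addition of boxes via the horizontal strip. In particular, every $\gamma$-box in $R$ is a $\delta$-box, and so $\mathbb{D}_r$ equals the set of $\delta$-boxes in row $r$ of $R$ minus the set of $\gamma$-boxes in row $r$ of $R$. Since both $\gamma$ and $\delta$ are partitions, each such intersection is a left-justified interval in its row, so $\mathbb{D}_r$ is itself an interval of the form $\{(r:c) : a_r \le c \le \delta_r\}$ for an appropriate left endpoint $a_r$.

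The plan is then to compute $a_r$ by a case analysis on whether row $r$ of $\gamma$ reaches into $R$ or not. Write $\gamma'$ for the intermediate shape after removing the vertical strip from $\gamma$. If $\gamma_r > n-2$, then the rightmost box of row $r$ of $\gamma$ lies in $R$, so it is not available for removal; hence $\gamma'_r = \gamma_r$, and the horizontal strip can only add boxes in row $r$ at columns $\gamma_r+1, \ldots, \delta_r$, all of which lie in $R$. This gives $a_r = \gamma_r+1$. If instead $\gamma_r \le n-2$, then $\gamma'_r \le \gamma_r \le n-2$, so after the horizontal strip is added, every column $c$ with $n-1 \le c \le \delta_r$ in row $r$ is an added box, giving $a_r = n-1$. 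Taking $r=1,2$ yields the stated formulas for $\mathbb{D}_1$ and $\mathbb{D}_2$.

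The only subtle point—and the place where one needs to be careful—is the case $\gamma_r \le n-2$: one must verify that \emph{every} column of $R$ up to $\delta_r$ is indeed covered by the horizontal strip in row $r$, rather than by a partially-removed-then-refilled portion. This follows because $\gamma'_r \le \gamma_r \le n-2 < n-1$, so the horizontal strip in row $r$ begins at column $\gamma'_r+1 \le n-1$ and runs contiguously to column $\delta_r$; thus its intersection with $R$ is exactly $\{(r:c) : n-1 \le c \le \delta_r\}$, independently of the precise value of $\gamma'_r$. With this in hand the two displayed formulas follow. Once $\mathbb{D}_1$ and $\mathbb{D}_2$ are explicit intervals, their connectedness (already noted in the text) is immediate, and the lemma is complete.
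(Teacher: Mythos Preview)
Your argument is correct, but it is more elaborate than necessary and implicitly assumes $\gamma\rightarrow\delta$, which the lemma does not. The paper gives no proof at all: it simply says ``by definition,'' because the statement is an immediate unpacking of what a $(\delta\setminus\gamma)$-box in $R$ is. Since $\gamma$ and $\delta$ are partitions, the boxes of $\delta$ in row $r$ not in $\gamma$ are precisely those in columns $\gamma_r+1,\ldots,\delta_r$; intersecting with $R$ (columns $n-1$ through $2n-2$) gives $\{(r:c):\max(\gamma_r+1,\,n-1)\le c\le \delta_r\}$, which is exactly the two-case formula. No reference to the vertical/horizontal strip decomposition is needed.

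Your detour through $\gamma\rightarrow\delta$ does no harm---the conclusion you reach is the same---but the step ``every $\gamma$-box in $R$ is a $\delta$-box, and so $\mathbb{D}_r$ equals the set of $\delta$-boxes minus the set of $\gamma$-boxes'' is a non sequitur: that equality is the \emph{definition} of $\mathbb{D}_r$, independent of any containment. The ``subtle point'' you flag in the last paragraph likewise dissolves once one works directly from the definition rather than from the strip description.
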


Let $\gamma^*$ denote the shape $(\gamma_1+p+1,\gamma_2-1)$. We gather some facts about which pairs $\gamma,\delta$ satisfy $\gamma\rightarrow\delta$. 

\begin{Lemma}\label{lemma:removeboxC}
If $\gamma \rightarrow \delta$ and $\gamma \not\subseteq \delta$, then $\delta = \gamma^*$.
\end{Lemma}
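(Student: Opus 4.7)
The plan is to read off $\delta$ from the constraints that condition (2) imposes on the vertical strip $V$ removed from $\gamma$ and the horizontal strip $H$ added to $\gamma\setminus V$, using the identity $\gamma\setminus\delta=V\setminus H$. Since $\gamma\setminus V$ must be a partition and $V$ lies in the first $n-2$ columns, the row 2 portion of $V$ must be a suffix $\{(2:c_0+1),\dots,(2:\gamma_2)\}$ of row 2 (possible only when $\gamma_2\le n-2$), and the row 1 portion of $V$ must be a suffix $\{(1:c_1+1),\dots,(1:\gamma_1)\}$ of row 1 confined to columns strictly greater than $\gamma_2$ (possible only when $\gamma_1\le n-2$).

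I would first rule out the case where $\gamma\setminus\delta$ consists only of row 1 boxes. Here $\delta_2\ge\gamma_2$ and $\delta_1<\gamma_1\le n-2$, so $\delta_1\le n-3$. Applying condition (2) to the leftmost removed row 1 box $(1:\delta_1+1)$, the reflected-diagonal formula $|c-(n-1)|+r$ gives the candidate related boxes $(1:\delta_1+1)$, $(1:2n-3-\delta_1)$, $(2:\delta_1+2)$, $(2:2n-4-\delta_1)$; a direct check shows each is either in $\gamma$ or lies outside $\delta$ (using $\delta_1,\delta_2\le n-3$), so no $(\delta\setminus\gamma)$-box is related to $(1:\delta_1+1)$, contradicting the ``exactly one'' clause. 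Hence $\delta_2<\gamma_2$, and in particular $(2:\gamma_2)\in\gamma\setminus\delta$. Applying condition (2) to $(2:\gamma_2)$ and its box above $(1:\gamma_2)$, and using $\delta_2<\gamma_2\le n-2$ to eliminate every row 2 candidate in the right half, I would conclude that the unique $(\delta\setminus\gamma)$-boxes related to them must be $(1:2n-1-\gamma_2)$ and $(1:2n-2-\gamma_2)$, both in row 1 (consistent with the ``same row'' clause), forcing $\delta_1\ge 2n-1-\gamma_2$.

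To finish, I would rule out any additional box in $V\setminus H$. If $(2:\gamma_2-1)$ were also in $\gamma\setminus\delta$, then applying condition (2) to its box above $(1:\gamma_2-1)$ would likewise designate $(1:2n-1-\gamma_2)$ as the associated $(\delta\setminus\gamma)$-box, coinciding with the one already assigned to $(2:\gamma_2)$, which is prohibited by the injective pairing encoded in condition (2). A parallel argument, pitting the row 1 suffix bound $c_1\le\gamma_1-1\le n-3$ against the established lower bound $\delta_1\ge 2n-1-\gamma_2>n-3$, rules out any row 1 removal. Therefore $V\setminus H=\{(2:\gamma_2)\}$, so $\delta_2=\gamma_2-1$, and $|\delta|=|\gamma|+p$ yields $\delta_1=\gamma_1+p+1$, i.e., $\delta=\gamma^*$. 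The main obstacle is the last step: extracting from the ``exactly one'' and ``same row'' clauses of condition (2) the fact that any removal strictly larger than $\{(2:\gamma_2)\}$ forces two distinct $(\gamma\setminus\delta)$-boxes (or their boxes above) to share a related $(\delta\setminus\gamma)$-box, a collision that the Pieri rule does not permit.
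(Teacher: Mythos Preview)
Your argument overlooks the key constraint that $V$ is a \emph{vertical strip}: by definition this means at most one box in each row, so in a two-row shape $V$ contains at most one box from row 1 (namely $(1:\gamma_1)$, if $\gamma_1\le n-2$) and at most one from row 2 (namely $(2:\gamma_2)$, if $\gamma_2\le n-2$). Your opening description of the row 2 portion of $V$ as a suffix $\{(2:c_0+1),\dots,(2:\gamma_2)\}$ of arbitrary length is therefore too permissive, and forces you into unnecessary work later on.

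The resulting gap shows up in your final step. To rule out $(2:\gamma_2-1)\in\gamma\setminus\delta$ you appeal to an ``injective pairing encoded in condition (2)'', arguing that $(1:\gamma_2-1)$ and $(2:\gamma_2)$ cannot both be matched to the same $(\delta\setminus\gamma)$-box $(1:2n-1-\gamma_2)$. But condition (2), as stated, only requires that each $(\gamma\setminus\delta)$-box and the box above it be related to \emph{exactly one} $(\delta\setminus\gamma)$-box, and that all such $(\delta\setminus\gamma)$-boxes lie in the same row; it does not say that distinct $(\gamma\setminus\delta)$-boxes must be related to distinct $(\delta\setminus\gamma)$-boxes. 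So this step is unjustified. Once you use the vertical-strip bound $|V\cap\text{row }2|\le 1$, the case $(2:\gamma_2-1)\in\gamma\setminus\delta\subseteq V$ is vacuous and the issue disappears.

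With that correction, your argument collapses to the paper's: since $\gamma\setminus\delta\subseteq V$ has at most one box per row, and since (as you correctly show) any row-1 box in $\gamma\setminus\delta$ forces $\delta_1\le n-3$ while condition (2) applied to $(2:\gamma_2)$ forces $\delta_1\ge 2n-1-\gamma_2\ge n+1$, the only possibility is $\gamma\setminus\delta=\{(2:\gamma_2)\}$, giving $\delta=\gamma^*$. The paper reaches the same conclusion more directly by noting that after reducing to ``remove one box from one row, add all boxes to the other row'', the alternative $(\gamma_1-1,\gamma_2+p+1)$ either fails to be a partition or has no boxes in the last $n$ columns, violating (2).
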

\begin{proof}
Boxes removed from $\gamma$ must be a vertical strip, so at most one box can be removed from each row of $\gamma$. Since a horizontal strip of boxes must be added after removing the vertical strip, we may assume boxes are not removed from both rows and that all $(\delta\setminus\gamma)$-boxes are added in the row from which we did not remove a box. The claim follows by noting $(\gamma_1-1,\gamma_2+p+1)$ is either not a partition or has no boxes in the last $n$ columns, violating (2). 
\end{proof}

\begin{Lemma}\label{lemma:threshC}
Suppose $|\gamma|\le 2n-3$ and $p+|\gamma|>2n-3$. If $\gamma^*\in P(n-2,n)$, then $\gamma \rightarrow \gamma^*$.
\end{Lemma}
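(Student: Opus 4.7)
My plan is to exhibit $\gamma^*$ as arising from $\gamma$ by the specific strip move that removes the single box $(2:\gamma_2)$ from row~$2$ and then adds to row~$1$ the horizontal strip of boxes $(1:c)$ for $\gamma_1+1\le c\le\gamma_1+p+1$. This produces exactly $(\gamma_1+p+1,\gamma_2-1)=\gamma^*$, with size change $-1+(p+1)=p$; the removed piece is trivially a vertical strip and the added piece is a horizontal strip. Before checking conditions~(1) and~(2), I would record two preliminary facts: $\gamma_2\le n-2$ (otherwise $(n-2)$-strictness of $\gamma$ would force $\gamma_1>\gamma_2>n-2$, so $|\gamma|\ge 2n-1$, contradicting $|\gamma|\le 2n-3$), so the removed box lies in $L$; and $\gamma_2\ge 1$, since $\gamma^*\in P(n-2,n)$ requires $\gamma_2-1\ge 0$.

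To verify~(2), I would compute directly from the definition of \emph{related} that, using $\gamma_2\le n-2$, the only boxes in row~$1$ related to $(2:\gamma_2)$ are $(1:2n-1-\gamma_2)$ and $(1:\gamma_2-1)$, and the only boxes in row~$1$ related to $(1:\gamma_2)$ are $(1:2n-2-\gamma_2)$ and $(1:\gamma_2)$. Of these four candidates, $(1:\gamma_2-1)$ and $(1:\gamma_2)$ cannot be $(\gamma^*\setminus\gamma)$-boxes because $\gamma_1\ge\gamma_2$. The chain
\[\gamma_1+1 \;\le\; 2n-2-\gamma_2 \;<\; 2n-1-\gamma_2 \;\le\; \gamma_1+p+1\]
then places the two remaining columns into the added horizontal strip: the left inequality is $|\gamma|\le 2n-3$ and the right inequality is $p+|\gamma|\ge 2n-2$, namely the second hypothesis. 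Since both related $(\gamma^*\setminus\gamma)$-boxes lie in row~$1$, condition~(2) holds.

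For~(1), any $\gamma$-box in $L$ with no $\gamma^*$-box below it has the form $(1:c)$ with $\gamma_2\le c\le\min(\gamma_1,n-2)$. Since every $(\gamma^*\setminus\gamma)$-box sits in row~$1$, and any two distinct row-$1$ boxes $(1:c)$, $(1:c'')$ with $c\le n-2$ and $c''\ge n-1$ are related iff $c''=2n-2-c$ (while two row-$1$ boxes both in $L$ are related only when equal), such a $(1:c)$ is related to at most one $(\gamma^*\setminus\gamma)$-box, giving~(1). The main obstacle throughout is essentially arithmetic: pinning down that the two related columns $2n-2-\gamma_2$ and $2n-1-\gamma_2$ both fall within $[\gamma_1+1,\gamma_1+p+1]$, which is precisely what the two size hypotheses on $|\gamma|$ and on $p+|\gamma|$ encode.
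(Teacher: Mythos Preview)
Your approach is essentially the same as the paper's: exhibit $\gamma^*$ via the strip move that removes $(2:\gamma_2)$ and adds the horizontal strip in row~$1$, then verify (1) and (2) directly from the arithmetic of the ``related'' relation. Your verification of~(2) matches the paper's exactly (the paper writes the same chain of inequalities with $\delta_2=\gamma_2-1$ in place of $\gamma_2-1$).

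There is one genuine gap in your verification of~(1). You assert that any $\gamma$-box in $L$ with no $\gamma^*$-box below it has the form $(1:c)$ with $\gamma_2\le c\le\min(\gamma_1,n-2)$. This is false: every row-$2$ $\gamma$-box $(2:c)$ for $1\le c\le\gamma_2$ also lies in $L$ (since $\gamma_2\le n-2$) and trivially has no $\gamma^*$-box below it, so condition~(1) must be checked for those boxes as well. (The paper's later definition of $T$ makes this explicit.) The fix is immediate and uses the same idea you already deployed: the row-$1$ boxes related to $(2:c)$ with $c\le n-2$ are $(1:c-1)$ and $(1:2n-1-c)$; since $c-1<\gamma_2\le\gamma_1$, the box $(1:c-1)$ (when it exists) is a $\gamma$-box and hence not a $(\gamma^*\setminus\gamma)$-box, leaving at most one related $(\gamma^*\setminus\gamma)$-box. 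Once you add this paragraph, your proof is complete and in fact more detailed than the paper's one-line justification of~(1).
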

\begin{proof}
Let $\delta=\gamma^*$. All $\mathbb{D}$-boxes are in row 1, thus (1) holds. The $(\gamma\setminus\delta)$-box $(2:\delta_2+1)$ is related to $(1:2n-2-\delta_2)$ and the box $(1:\gamma_2)$ above $(2:\delta_2+1)$ is related to $(1: 2n-2-\gamma_2)$. Since $\gamma_1+1\le 2n-2-\gamma_2<2n-2-\delta_2\le \delta_1$, we have $(1:2n-2-\delta_2)$ and $(1:2n-2-\gamma_2)$ are different $\mathbb{D}$-boxes. Hence (2) holds. 
\end{proof}

\begin{Lemma}\label{lemma:nonthreshC}
If either $|\delta|\le 2n-3$ or $|\gamma|>2n-3$, then $\gamma \rightarrow \delta \Rightarrow \gamma \subseteq \delta$. In particular, $\delta$ is obtained from $\gamma$ without removing any box of $\gamma$.
\end{Lemma}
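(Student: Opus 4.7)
The plan is to argue the contrapositive. Suppose $\gamma\rightarrow\delta$ and $\gamma\not\subseteq\delta$; I will show both $|\gamma|\le 2n-3$ and $|\delta|\ge 2n-2$, thereby contradicting the hypothesis. By Lemma~\ref{lemma:removeboxC}, the assumption $\gamma\not\subseteq\delta$ forces $\delta=\gamma^{*}=(\gamma_1+p+1,\gamma_2-1)$, so in particular $\gamma_2\ge 1$.

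With $\delta=\gamma^{*}$ the unique $(\gamma\setminus\delta)$-box is $(2:\gamma_2)$, and since $\delta_2<\gamma_2$ we have $\mathbb{D}_2=\varnothing$, so every $(\delta\setminus\gamma)$-box in $R$ lies in row $1$. Clause (2) of the definition of $\rightarrow$ then demands that both $(2:\gamma_2)$ and the box $(1:\gamma_2)$ directly above it be related to exactly one $(\delta\setminus\gamma)$-box, with those companions living in the same row (which must be row $1$). A direct evaluation of the invariant $|c-(n-1)|+r$ pins these companion boxes down uniquely as $(1:2n-1-\gamma_2)$ and $(1:2n-2-\gamma_2)$ respectively; the requirement thus becomes that both lie in $\mathbb{D}_1$.

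Feeding these two boxes into Lemma~\ref{lemma:mathbbDC}, the upper-bound condition $2n-1-\gamma_2\le\delta_1=\gamma_1+p+1$, which is present in both cases of that lemma, rearranges to $|\delta|\ge 2n-2$. When $\gamma_1>n-2$ the lower-bound condition from the first case of Lemma~\ref{lemma:mathbbDC} reads $\gamma_1+1\le 2n-2-\gamma_2$, equivalently $|\gamma|\le 2n-3$; in the complementary case $\gamma_1\le n-2$ the bound $|\gamma|\le 2(n-2)\le 2n-3$ holds trivially. Both inequalities together contradict the hypothesis that $|\delta|\le 2n-3$ or $|\gamma|>2n-3$, so $\gamma\subseteq\delta$. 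The ``in particular'' clause is then immediate: once $\gamma\subseteq\delta$, one may realize the move $\gamma\rightarrow\delta$ with the empty vertical strip and the horizontal strip $\delta\setminus\gamma$, so no box of $\gamma$ is removed.

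The only step requiring care is the identification of the two companion boxes via the relation invariant, together with the routine case split on whether $\gamma_1$ exceeds $n-2$; once these are in hand, the conclusion is simply read off from Lemma~\ref{lemma:mathbbDC}.
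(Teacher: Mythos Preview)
Your proof is correct and follows essentially the same approach as the paper's: both use Lemma~\ref{lemma:removeboxC} to reduce to $\delta=\gamma^{*}$, then check that condition~(2) forces the boxes $(1:2n-2-\gamma_2)$ and $(1:2n-1-\gamma_2)$ to be $(\delta\setminus\gamma)$-boxes, which yields the two inequalities. The paper phrases this as two separate contradictions (one per hypothesis) rather than as a single contrapositive, and is terser about invoking Lemma~\ref{lemma:mathbbDC}, but the content is the same. One small point worth making explicit: when you say the relation invariant ``pins these companion boxes down uniquely,'' you are silently using that the other row-$1$ solutions of the relation equation (namely $(1:\gamma_2-1)$ and $(1:\gamma_2)$) lie in columns $\le\gamma_1$ and hence are already $\gamma$-boxes, so cannot be $(\delta\setminus\gamma)$-boxes; this is immediate but is what makes the ``exactly one'' in clause~(2) collapse to your stated requirement.
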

\begin{proof}
Assume for a contradiction that $\gamma \rightarrow \delta$ but $\gamma \not\subseteq \delta$. Then by Lemma~\ref{lemma:removeboxC}, $\delta = \gamma^*$. Suppose $|\gamma|>2n-3$. Then the box $(1:\gamma_2)$ above the removed box is related to $(1:2n-2-\gamma_2)$, which is not in $\mathbb{D}$ since $\gamma_1+1>2n-2-\gamma_2$. This violates (2). Suppose $|\delta|\le 2n-3$. Then the removed box $(2:\delta_2+1)$ is related to $(1:2n-2-\delta_2)$, which is not in $\mathbb{D}$ since $\delta_1<2n-2-\delta_2$. This violates (2).
\end{proof}

Given $\gamma\rightarrow\delta$, we will say a box of $\mathbb{D}$ is {\bf killed} if it is mentioned in (1) or (2), i.e., if it is not in $\mathbb{A}$. We will say a connected component $D$ of $\mathbb{D}$ is {\bf bisected} if a box $\mathfrak{d}$ of $D$ is killed but there exist boxes of $D$ in both earlier and later columns than $\mathfrak{d}$, which are not killed. The following lemmas will help us in computing $N(\gamma,\delta)$.

\begin{Lemma}\label{lemma:specialPiericaseC}
If $\gamma^*\in P(n-2,n)$ and $\gamma\rightarrow\gamma^*$, then $N(\gamma,\delta)=0$.
\end{Lemma}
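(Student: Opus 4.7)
The plan is to show that $\mathbb{A}$ is either empty or a single connected row-$1$ strip containing the column-$(n-1)$ box, so $N(\gamma,\gamma^*)=0$. By Lemma~\ref{lemma:removeboxC}, $\delta=\gamma^*$, and since $\delta_2<\gamma_2$ every $(\delta\setminus\gamma)$-box lies in row $1$. By Lemma~\ref{lemma:mathbbDC}, $\mathbb{D}=\mathbb{D}_1$ is thus a contiguous run of row-$1$ boxes, so connected components of $\mathbb{A}$ coincide with the maximal subintervals of consecutive columns of unmentioned $\mathbb{D}_1$-boxes. I would also observe that $\gamma_2\le n-2$: if $\gamma_2\ge n-1$ then $(n-2)$-strictness forces $\gamma_1>\gamma_2$, so $|\gamma|\ge 2n-1>2n-3$, and Lemma~\ref{lemma:nonthreshC} gives $\gamma\subseteq\gamma^*$, contradicting $\gamma^*_2<\gamma_2$.

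Next I would enumerate the boxes of $\mathbb{D}_1$ mentioned in (1) or (2). Using the defining relation $|c-(n-1)|+r=|c'-(n-1)|+r'$, each row-$1$ box $(1:c)$ in $L$ with $c\ge\gamma_2$ (equivalently, with no $\delta$-box below) is related to exactly one $(\delta\setminus\gamma)$-box in $R$, namely $(1:2n-2-c)$; each row-$2$ box $(2:c)$ in $L$ with $1\le c\le\gamma_2$ is related to exactly one $(\delta\setminus\gamma)$-box in $R$, namely $(1:2n-1-c)$. The two boxes singled out by (2) are the $c=\gamma_2$ cases of these two families, hence are automatically mentioned. A short computation shows the mentioned columns inside the range of $\mathbb{D}_1$ form the interval $[\max(\gamma_1+1,\,2n-2-\gamma_1),\,\delta_1]$.

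Finally I would split on $\gamma_1$. If $\gamma_1\le n-2$, the unmentioned interval is $[n-1,\,2n-3-\gamma_1]$, a nonempty single block whose leftmost box is in column $n-1$, so the unique component of $\mathbb{A}$ uses the column-$(n-1)$ box and $N(\gamma,\gamma^*)=0$. If $\gamma_1\ge n-1$, the mentioned interval covers all of $\mathbb{D}_1$, so $\mathbb{A}=\emptyset$ and again $N(\gamma,\gamma^*)=0$. The main obstacle is the second paragraph: one must carefully identify the related partners across the $L$--$R$ divide for every $\gamma$-box in $L$ with no $\delta$-box below, and verify that the resulting mentioned columns form the claimed interval. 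After that, the two-case analysis at the end is immediate.
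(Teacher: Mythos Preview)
Your proof is correct and follows essentially the same strategy as the paper: split on the size of $\gamma_1$ and show $\mathbb{A}$ is either empty or a single row-$1$ interval containing the column-$(n-1)$ box. Your version is more explicit than the paper's (you compute the mentioned interval precisely and justify $\gamma_2\le n-2$ via Lemma~\ref{lemma:nonthreshC}, and your cutoff is $\gamma_1\ge n-1$ rather than the paper's $\gamma_1\ge n-2$), but the underlying argument is the same.
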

\begin{proof}
Let $\delta=\gamma^*$. If $\gamma_1\ge n-2$, all boxes of $R$ except $(1:n-1)$ are mentioned in (1) or (2), so $N(\gamma,\delta)=0$. Suppose $\gamma_1<n-2$. Then $\mathbb{D}_2=\emptyset$, so $\mathbb{D}=\mathbb{D}_1$. By (1), (2) it is clear the $\mathbb{D}_1$-boxes killed are the last $l$ boxes of $\mathbb{D}_1$ for some $l>0$, hence $\mathbb{D}_1$ is not bisected. Thus $\mathbb{A}$ is a single component containing $(1:n-1)$, whence $N(\gamma,\delta)=0$.
\end{proof}

Whenever $\gamma\rightarrow\delta$ with $\gamma\subset\delta$, define 
\[S=\{(1:c) : \delta_2+1\le c \le \gamma_1\}\cap L\ \qquad \mbox{and} \qquad T= \{(2:c) : 1\le c \le \gamma_2)\}\cap L.\]  
By definition, the boxes of $S$ and $T$ are the $\gamma$-boxes considered in (1), hence the only boxes capable of killing $\mathbb{D}$-boxes.

\begin{Lemma}\label{lemma:killedC} 
Let $\gamma\rightarrow\delta$ with $\gamma\subset\delta$. Suppose $(1:c)\in\mathbb{D}_1$. If $c=n-1$ then $(1:c)$ is not killed, while if $c\neq n-1$ then
\begin{itemize}
\item $(1:c)$ is killed by $S$ if and only if $(1:c)\in S'_1 = \{(1:c') : 2n-2-\gamma_1\le c'\le 2n-3-\delta_2\}$
\item $(1:c)$ is killed by $T$ if and only if $(1:c) \in T'_1 = \{(1:c') : 2n-1-\gamma_2\le c'\le 2n-2\}.$
\end{itemize}
Suppose $(2:c)\in\mathbb{D}_2$. If $c=n-1$ then $(2:c)$ is not killed, while if $c\neq n-1$ then
\begin{itemize}
\item $(2:c)$ is never killed by $S$
\item $(2:c)$ is killed by $T$ if and only if $(2:c) \in T'_2 = \{(2:c') : 2n-2-\gamma_2\le c'\le 2n-3\}.$
\end{itemize}
\end{Lemma}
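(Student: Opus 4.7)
The plan is to compute, for each claim in the lemma, the unique candidate relation between an $R$-box and an $L$-box, and then read off the killing condition from $S$- or $T$-membership. The starting observation is that, because $\gamma \subset \delta$, there are no $(\gamma \setminus \delta)$-boxes and hence condition (2) is vacuous, so a box of $\mathbb{D}$ is killed iff it is related to some box of $S \cup T$. Moreover, for any $(r:c)$ with $c \ge n-1$ in $R$ and any $(r':c')$ with $c' \le n-2$ in $L$, the defining equality $|c-(n-1)|+r = |c'-(n-1)|+r'$ simplifies to the linear constraint $c + c' = 2(n-1) + (r'-r)$, forcing $c'$ uniquely once $c$, $r$, and $r'$ are chosen.

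I first dispatch the boundary cases $c = n-1$. Every $L$-box $(r':c')$ satisfies $|c'-(n-1)|+r' \ge 1 + 1 = 2$, whereas $(1:n-1)$ has value $1$, so $(1:n-1)$ is related to no $L$-box and is not killed. For $(2:n-1)$ the value is $2$, realized in $L$ uniquely by $(1:n-2)$; but $(2:n-1) \in \mathbb{D}_2$ forces $\delta_2 \ge n-1$, whence $(2:n-2) \in \delta$ and therefore $(1:n-2) \notin S$, so $(2:n-1)$ is not killed either.

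For the remaining cases I use Lemma~\ref{lemma:mathbbDC} to observe that $c \ne n-1$ in $\mathbb{D}_j$ forces $c \ge n$, and then substitute the relation identity into the $S$- or $T$-membership inequalities. For the first bullet, $c' = 2n-2-c \in S$ requires $\delta_2+1 \le c' \le \min(\gamma_1, n-2)$, and since $c \ge n$ automatically gives $c' \le n-2$, this collapses to $c \in [2n-2-\gamma_1,\, 2n-3-\delta_2] = S'_1$. For the second, $c' = 2n-1-c \in T$ gives the raw range $c \in [\max(2n-1-\gamma_2, n+1),\, 2n-2]$; when $\gamma_2 \le n-2$ this is exactly $T'_1$, and when $\gamma_2 \ge n-1$ the $(n-2)$-strictness of $\gamma$ forces $\gamma_1 \ge n$, so $\mathbb{D}_1 \subseteq [n+1, \delta_1]$ and $T'_1 \cap \mathbb{D}_1$ again coincides with the killed set. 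The fourth bullet is identical with $c' = 2n-2-c \in T$ yielding $T'_2$.

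Finally, the third bullet --- that $(2:c)$ is never killed by $S$ --- follows from a size comparison: the candidate $c' = 2n-3-c$ satisfies $c' \le n-3 < n \le c \le \delta_2$, so $(2:c') \in \delta$ and hence $(1:c') \notin S$. I expect the main obstacle to be the case-by-case bookkeeping in the third paragraph, verifying that the stated intervals $S'_1, T'_1, T'_2$ intersected with $\mathbb{D}_j$ capture the killed boxes exactly in the corner case $\gamma_i \ge n-1$; in each such case $(n-2)$-strictness resolves any discrepancy.
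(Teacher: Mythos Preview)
Your proof is correct and follows essentially the same approach as the paper: unwinding the definition of ``related'' to compute which $L$-boxes can kill which $\mathbb{D}$-boxes. The paper's proof is extremely terse (``The remaining points also follow from the definition of being related''), whereas you actually carry out the computation, including the corner case $\gamma_2\ge n-1$ where $(n-2)$-strictness is needed. One small difference worth noting: for the third bullet the paper observes more directly that $\mathbb{D}_2\neq\emptyset$ forces $\delta_2>n-2$ and hence $S=\emptyset$, which immediately gives both that $(2:c)$ is never killed by $S$ and that $(2:n-1)$ is not killed; your argument via the explicit related box $(1:2n-3-c)$ reaches the same conclusion but less economically.
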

\begin{proof}
Clearly $(1:n-1)$, $(2:n-2)$ can never be killed. The existence of a $\mathbb{D}$-box in row 2 implies $\delta_2>n-2$ and thus $S=\emptyset$, so $(2:c)$ is never killed by $S$ and also $(2:n-1)$ can never be killed. The remaining points also follow from the definition of being related.
\end{proof}

\begin{Corollary}\label{cor:killedC}
Suppose $\gamma\rightarrow\delta$ with $\gamma\subset\delta$. Then if $(1:2n-2-\delta_2)$ is a $\mathbb{D}_1$-box, it is not killed.
\end{Corollary}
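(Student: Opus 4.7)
The plan is to apply Lemma~\ref{lemma:killedC} directly to the box $(1:2n-2-\delta_2)$, ruling out both possible killing scenarios using the hypothesis $\gamma\subset\delta$.

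First I would handle the trivial case: if $2n-2-\delta_2 = n-1$, i.e., $\delta_2 = n-1$, then the first bullet of Lemma~\ref{lemma:killedC} immediately gives that $(1:n-1)$ is not killed. So I may assume $2n-2-\delta_2\neq n-1$, and by Lemma~\ref{lemma:killedC} any killing would force membership in $S_1'=[2n-2-\gamma_1,\, 2n-3-\delta_2]$ (killed by $S$) or in $T_1'=[2n-1-\gamma_2,\, 2n-2]$ (killed by $T$).

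For membership in $S_1'$, the upper endpoint constraint becomes $2n-2-\delta_2 \le 2n-3-\delta_2$, which is absurd, so killing by $S$ cannot occur. For membership in $T_1'$, the lower endpoint constraint becomes $2n-2-\delta_2 \ge 2n-1-\gamma_2$, equivalently $\gamma_2 \ge \delta_2+1$; but the standing hypothesis $\gamma \subseteq \delta$ gives $\gamma_2 \le \delta_2$, a contradiction. Hence $(1:2n-2-\delta_2)$ is not killed.

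There is really no serious obstacle here: the corollary is a direct arithmetic consequence of the ranges classified in Lemma~\ref{lemma:killedC}, together with the containment $\gamma\subseteq\delta$ inherited from the hypothesis. The only point requiring any care is separating off the $c=n-1$ edge case at the start, so as to legitimately invoke the second and third bullets of Lemma~\ref{lemma:killedC}.
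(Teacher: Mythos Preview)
Your proof is correct and follows essentially the same approach as the paper: both argue that the column index $2n-2-\delta_2$ exceeds the top of $S_1'$ (since $2n-2-\delta_2>2n-3-\delta_2$) and falls below the bottom of $T_1'$ (since $\gamma_2\le\delta_2$ gives $2n-2-\delta_2<2n-1-\gamma_2$). Your explicit handling of the edge case $c=n-1$ is a minor refinement over the paper's terser one-line version.
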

\begin{proof}
Since $2n-3-\delta_2<2n-2-\delta_2<2n-1-\gamma_2$, $(1:2n-2-\delta_2)$ is not in $S'_1$ or $T'_1$.
\end{proof}

\begin{Lemma}\label{lemma:bisectionC}
A connected component of $\mathbb{D}$ is bisected if and only if all of the following hold:
\begin{itemize}
\item[(i)] $|\gamma|\le 2n-3$ and $|\delta|>2n-3$
\item[(ii)] $\gamma\subseteq\delta$
\item[(iii)] $\gamma_1<n-1$
\item[(iv)] $\delta_2<\gamma_1.$
\end{itemize} 
\end{Lemma}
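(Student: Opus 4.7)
The approach is to locate the killed box $\mathfrak{d}$ witnessing bisection of a component $D$ of $\mathbb{D}$, and to perform a case analysis on which of $S'_1$, $T'_1$, or $T'_2$ (from Lemma~\ref{lemma:killedC}) contains $\mathfrak{d}$. The central structural observations are that $T'_1 \cap \mathbb{D}_1$ is an interval ending at the right endpoint $\delta_1$ of $\mathbb{D}_1$ (since $T'_1$ extends through column $2n-2 \geq \delta_1$), that $T'_2 \cap \mathbb{D}_2$ similarly ends at $\delta_2$ (using $2n-3 \geq \delta_2$), and that $S'_1 \cap \mathbb{D}_1$ ends at $\min(\delta_1, 2n-3-\delta_2)$, immediately to the left of $(1:2n-2-\delta_2)$, which is non-killed by Corollary~\ref{cor:killedC}.

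First I would eliminate $\mathfrak{d} \in T'_1$: no non-killed $\mathbb{D}_1$-box can lie to the right of $\mathfrak{d}$ by the structural observation, while any $\mathbb{D}_2$-box $(2:c_2)$ with $c_2 > c_0 \geq 2n-1-\gamma_2$ satisfies $c_2 \in [2n-\gamma_2, 2n-3] \subseteq T'_2$, hence is killed. A symmetric argument eliminates $\mathfrak{d} \in T'_2$: the non-killed $\mathbb{D}_1$-columns all lie in $[\text{left endpoint of }\mathbb{D}_1,\, 2n-3-\gamma_1] \cup [2n-2-\delta_2,\, 2n-2-\gamma_2]$, whose maximum is $2n-2-\gamma_2 \leq c_0$, so no non-killed $\mathbb{D}_1$-box lies to the right of $\mathfrak{d}$.

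So $\mathfrak{d} \in S'_1 \cap \mathbb{D}_1$, and from this I would extract each of (i)-(iv) in turn. Non-emptiness of $S$ forces (iv) $\delta_2 < \gamma_1$. A non-killed $\mathbb{D}_1$-box to the left of $\mathfrak{d}$ requires the left endpoint of $\mathbb{D}_1$ to be strictly less than $2n-2-\gamma_1$, which is equivalent to (iii) $\gamma_1 < n-1$; when $\gamma_1 \geq n-1$, I would additionally rule out that $\mathbb{D}_2$ supplies the witness, by noting that $\mathbb{D}_1 \cup \mathbb{D}_2$ being connected would force $\delta_2 \geq \gamma_1$, contradicting (iv). A non-killed box to the right requires $(1:2n-2-\delta_2) \in \mathbb{D}_1$, i.e., $|\delta| > 2n-3$, with the other half of (i) (namely $|\gamma| \leq 2n-3$) automatic from (iii) via $|\gamma| \leq 2(n-2) < 2n-3$. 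Finally, (ii) $\gamma \subseteq \delta$ is necessary: if $\gamma \not\subseteq \delta$, Lemma~\ref{lemma:removeboxC} gives $\delta = \gamma^*$ and Lemma~\ref{lemma:specialPiericaseC} gives $N(\gamma,\delta) = 0$, meaning $\mathbb{A}$ is a single connected component---inconsistent with bisection, which always yields at least two components of $\mathbb{A}$.

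Conversely, assuming (i)-(iv), the bisection is exhibited directly: $(1:2n-2-\gamma_1) \in S'_1 \cap \mathbb{D}_1$ is killed, $(1:n-1) \in \mathbb{D}_1$ is a non-killed box to its left (using (iii)), and $(1:2n-2-\delta_2) \in \mathbb{D}_1$ is non-killed and to its right (using $|\delta| > 2n-3$); since (iii) and (iv) jointly force $\delta_2 \leq n-3$, we have $\mathbb{D}_2 = \emptyset$ and $D = \mathbb{D}_1$ contains all three boxes. The main obstacle will be the connectedness bookkeeping in the regime $\gamma_1 \geq n-1$: careful accounting is needed to rule out that $\mathbb{D}_2$ can ever share a component with an $S'_1$-killed box of $\mathbb{D}_1$, thereby spuriously providing the non-killed left witness.
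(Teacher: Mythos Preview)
Your proposal is correct and follows essentially the same approach as the paper's proof. Both arguments hinge on Lemma~\ref{lemma:killedC} to show that only $S$ (not $T$) can bisect a component, then extract (i)--(iv) from the structure of $S'_1$; the converse direction is identical. The main organizational difference is that the paper argues $(\Rightarrow)$ by contrapositive, negating one condition at a time, while you argue directly by locating the killed box $\mathfrak{d}$ and doing a case analysis on $S'_1,T'_1,T'_2$. Two small points: in your $T'_2$ elimination the set $S'_1$ is actually vacuous (since $\mathbb{D}_2\neq\emptyset$ forces $\delta_2>n-2$ and hence $S=\emptyset$), so your interval description of non-killed $\mathbb{D}_1$-columns simplifies; and your inference that $N(\gamma,\delta)=0$ forces at most one component of $\mathbb{A}$ is correct but deserves the one-line justification that at most one component can touch column $n-1$ (since $(1{:}n{-}1)$ and $(2{:}n{-}1)$ share an edge).
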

\begin{proof}
($\Rightarrow$, by contrapositive) If (ii) does not hold, then by the proof of Lemma~\ref{lemma:specialPiericaseC} no component of $\mathbb{D}$ is bisected, so assume (ii) holds. Then for a given component $D$ of $\mathbb{D}$, by Lemma~\ref{lemma:killedC} $T$ kills the latest $l$ boxes of $D$ for some $l\ge 0$ and thus does not bisect $D$. So only $S$ can bisect $D$. If (iv) does not hold, then $S=\emptyset$ and $\mathbb{D}$ cannot be bisected. Suppose (iii) does not hold. We may assume $\mathbb{D}_2=\emptyset$, otherwise $S=\emptyset$ and we are done. Then $\mathbb{D}=\mathbb{D}_1$, and since $2n-2-\gamma_1\le \gamma_1+1$, we have $\mathbb{D}_1\setminus S'_1$ is connected. Finally, suppose (i) does not hold. Then either $|\gamma|> 2n-3$ or $|\delta|\le 2n-3$. We may assume the latter three conditions hold. Then (iii) implies $|\gamma|<2n-3$, so we must have $|\delta|\le 2n-3$. Then $\mathbb{D}=\mathbb{D}_1$. Since $2n-3-\delta_2\ge \delta_1$, $\mathbb{D}_1\setminus S'_1$ is connected.

($\Leftarrow$) Suppose all four conditions hold. Then by (iii) and (iv), $\delta_2<n-2$, so $\mathbb{D}=\mathbb{D}_1$. By (i) $|\delta|>2n-3$, so $\delta_1>n-1$, and since by (iii) $\gamma_1<n-1$, we have $(1:n-1)$ is a $\mathbb{D}_1$-box and is not killed. Next, $(1:2n-2-\delta_2)$ is a $\mathbb{D}_1$-box since by (i) $2n-2-\delta_2\le \delta_1$, and by Corollary~\ref{cor:killedC} it is not killed. Finally, since by (iv) $\delta_2<\gamma_1$ we have $n-1<2n-2-\gamma_1\le 2n-3-\delta_2<2n-2-\delta_2$. In particular, $S'_1\neq\emptyset$, so a $\mathbb{D}_1$-box between $(1:n-1)$ and $(1:2n-2-\delta_2)$ is killed. Hence $\mathbb{D}_1$ is bisected.
\end{proof}

\begin{Corollary}\label{cor:bisectionC}
If a connected component of $\mathbb{D}$ is bisected, then $N(\gamma,\delta)=1$.
\end{Corollary}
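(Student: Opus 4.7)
The plan is to unpack the characterization of bisection in Lemma~\ref{lemma:bisectionC} and read off the connected components of $\mathbb{A}$ directly. Since some component of $\mathbb{D}$ is bisected, conditions (i)--(iv) hold. In particular (iii) and (iv) give $\delta_2 < \gamma_1 \le n-2$, so $\delta_2 \le n-3$ and $\mathbb{D}_2 = \emptyset$; combined with $\gamma_1 \le n-2$, Lemma~\ref{lemma:mathbbDC} gives $\mathbb{D} = \mathbb{D}_1 = \{(1:c) : n-1 \le c \le \delta_1\}$, a single contiguous row of boxes. Condition (i) yields $|\delta| \ge 2n-2$, so the column index $2n-2-\delta_2$ lies in the range of $\mathbb{D}_1$.

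Next I would describe the killed boxes of $\mathbb{D}_1$ using Lemma~\ref{lemma:killedC}. The boxes killed by $S$ form the interval $S'_1 = \{(1:c) : 2n-2-\gamma_1 \le c \le 2n-3-\delta_2\}$, which is non-empty by (iv); the boxes killed by $T$ form the (possibly empty) suffix $T'_1 = \{(1:c) : 2n-1-\gamma_2 \le c \le 2n-2\}$. Both intervals lie strictly to the right of column $n-1$, since $\gamma_1 < n-1$ forces $2n-2-\gamma_1 > n-1$. Crucially, $T'_1$ is a suffix of $\mathbb{D}_1$, so it cannot create an additional component of $\mathbb{A}$---only the middle killer $S'_1$ can split $\mathbb{A}$.

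Consequently, $\mathbb{A}$ decomposes into a left piece $\{(1:c) : n-1 \le c \le 2n-3-\gamma_1\}$ containing $(1:n-1)$, and a right piece beginning at $(1:2n-2-\delta_2)$ (which lies in $\mathbb{A}$ by Corollary~\ref{cor:killedC}, and strictly before $T'_1$ because $\gamma_2 \le \delta_2$). Each piece is a contiguous set of row-$1$ boxes, hence connected, and $S'_1$ separates them. Since only the right piece avoids column $n-1$, it is the unique component contributing to $N(\gamma,\delta)$, giving $N(\gamma,\delta) = 1$. The main thing to verify carefully is the column arithmetic ensuring the two pieces are genuinely disjoint and that $T'_1$ does not carve out further components; each such check follows immediately from (iii), (iv), and $\gamma \subseteq \delta$.
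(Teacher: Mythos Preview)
Your argument is correct and follows essentially the same route as the paper: reduce to $\mathbb{D}=\mathbb{D}_1$ via conditions (iii)--(iv), identify the killed boxes as $S'_1\cup T'_1$, and observe that $\mathbb{A}$ consists of exactly two contiguous pieces, the left one containing $(1:n-1)$. The paper's proof is terser, simply citing the proof of Lemma~\ref{lemma:bisectionC} for these facts, whereas you spell out the column arithmetic explicitly; the content is the same.
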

\begin{proof}
By the proof of Lemma~\ref{lemma:bisectionC}, if a connected component of $\mathbb{D}$ is bisected then $\mathbb{D}=\mathbb{D}_1$, so $\mathbb{D}_1$ is bisected. It also follows from the proof that $\mathbb{D}_1\setminus(S'_1\cup T'_1)=\mathbb{A}$ has two connected components, one of which uses $(1:n-1)$. Thus $N(\gamma,\delta)=1$.
\end{proof}

\begin{Lemma}\label{lemma:threshcptsurvivesC}
If $\gamma\rightarrow\delta$ with $\gamma\subset\delta$, $|\gamma|\le 2n-3$, $|\delta|>2n-3$, $\gamma_1\ge n-1$ and also $\mathbb{D}_1$ is nonempty, then not all $\mathbb{D}_1$-boxes are killed.
\end{Lemma}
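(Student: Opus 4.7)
The plan is to exhibit an explicit box of $\mathbb{D}_1$ that survives, via a dichotomy on whether $\gamma_1+\delta_2\le 2n-3$ or $\gamma_1+\delta_2\ge 2n-2$. First I would reformulate Lemma~\ref{lemma:killedC}: since $\gamma_1\ge n-1$, we have $\mathbb{D}_1 = \{(1:c) : \gamma_1+1\le c\le \delta_1\}$, and moreover $2n-2-\gamma_1\le \gamma_1+1$, so the $S$-killed portion of $\mathbb{D}_1$ is the (possibly empty) prefix $\{(1:c) : \gamma_1+1\le c\le 2n-3-\delta_2\}$. Since $|\gamma|\le 2n-3$, the inequality $\gamma_1+1<2n-1-\gamma_2$ holds, so the $T$-killed portion of $\mathbb{D}_1$ is the (possibly empty) suffix $\{(1:c) : 2n-1-\gamma_2\le c\le \delta_1\}$. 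To show not all of $\mathbb{D}_1$ is killed, it therefore suffices to find one box in $\mathbb{D}_1$ lying in neither the prefix nor the suffix.

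In the first case $\gamma_1+\delta_2\le 2n-3$, the box $(1:2n-2-\delta_2)$ lies in $\mathbb{D}_1$: its column is $\ge \gamma_1+1$ by the case hypothesis and $\le \delta_1$ because $|\delta|\ge 2n-2$. By Corollary~\ref{cor:killedC}, it is not killed by $S$, and it is not killed by $T$ either, since $2n-2-\delta_2\ge 2n-1-\gamma_2$ would force $\gamma_2\ge \delta_2+1$, contradicting $\gamma\subset\delta$. In the second case $\gamma_1+\delta_2\ge 2n-2$, the prefix from $S$ is empty because $2n-3-\delta_2\le \gamma_1-1<\gamma_1+1$, so $S$ kills no box in $\mathbb{D}_1$; and the first box $(1:\gamma_1+1)$ of $\mathbb{D}_1$ escapes $T$ because $\gamma_1+1<2n-1-\gamma_2$, using $|\gamma|\le 2n-3$.

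The only real hurdle is keeping the bookkeeping straight---verifying the two inequalities that collapse $S'_1\cap \mathbb{D}_1$ and $T'_1\cap \mathbb{D}_1$ to a clean prefix and suffix, and then checking that the dichotomy on $\gamma_1+\delta_2$ (together with $\gamma\subseteq\delta$ and the bounds on $|\gamma|,|\delta|$) forces one of the two specific boxes $(1:2n-2-\delta_2)$ or $(1:\gamma_1+1)$ to remain uncovered. After those reductions, Lemma~\ref{lemma:killedC} and Corollary~\ref{cor:killedC} finish the argument immediately.
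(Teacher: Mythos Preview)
Your proof is correct and follows essentially the same approach as the paper: both arguments identify the $S$-killed and $T$-killed portions of $\mathbb{D}_1$ via Lemma~\ref{lemma:killedC}, then exhibit an explicit surviving box---either $(1:\gamma_1+1)$ or $(1:2n-2-\delta_2)$---using Corollary~\ref{cor:killedC}. The only difference is organizational: the paper uses a trichotomy on whether $S'_1$ or $T'_1$ is empty (with $(1:\delta_1)$ appearing in one branch), whereas your dichotomy on $\gamma_1+\delta_2$ compresses this to two cases and is arguably cleaner.
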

\begin{proof}
Since $\delta_1>2n-3-\delta_2$, we have $(1:\delta_1)\in \mathbb{D}_1\setminus S'_1$. Since $\gamma_1+1<2n-1-\gamma_2$, we have $(1:\gamma_1+1)\in \mathbb{D}_1\setminus T'_1$. Thus if either $S'_1$ or $T'_1$ is empty, we are done. If both $S'_1$ and $T'_1$ are nonempty, then $(2n-2-\delta_2)$ is a $\mathbb{D}_1$-box since $2n-3-\delta_2<2n-2-\delta_2<2n-1-\gamma_2$. By Corollary~\ref{cor:killedC} it is not killed.
\end{proof}

Now we consider the RYD model. In the coadjoint case $k=2$, the base region is a $2\times (2n-3)$ rectangle and the top region is a single root. From now on, we will use the notation of \cite{Searles.Yong} for the RYDs. An RYD for $LG(2,2n)$ will be denoted $\olambda = \langle \lambda|\bullet\rangle$ or $\olambda = \langle \lambda|\circ\rangle$ where $\lambda=(\lambda_1,\lambda_2)$ is the partition in $2\times (2n-3)$ corresponding to the roots used in the base region, and $\bullet/\circ$ denotes whether $\olambda$ uses the single root in the top region or not. We will denote the set of RYDs for $LG(2,2n)$ by ${\mathbb Y}_{LG(2,2n)}$ (this set is the same as ${\mathbb Y}_{OG(2,2n+1)}$ from the introduction). Let $\olambda, \omu\in \mathbb{Y}_{LG(2,2n)}$, and let $M=\min\{\lambda_1-\lambda_2, \mu_1-\mu_2\}$. We reprise the definition of the product $\star$ on RYDs from \cite[Theorem 4.1]{Searles.Yong}: 

\begin{Definition}\cite{Searles.Yong}\label{def:LGproduct}
Define a commutative product $\star$ on $\mathbb{Z}[\mathbb{Y}_{LG(2,2n)}]$:
\begin{enumerate}
\item[(A)] If $|\langle\lambda|\circ\rangle| + |\langle \mu|\circ\rangle| \le 2n-3$, then
\[\langle\lambda|\circ\rangle \star \langle \mu|\circ\rangle = \sum_{0 \le k \le M} \langle\lambda_1+\mu_1-k, \lambda_2+\mu_2+k| \circ\rangle\]
\item[(B)] If $|\langle\lambda|\circ\rangle| + |\langle \mu|\circ\rangle| > 2n-3$, then
\[\langle\lambda|\circ\rangle\star \langle \mu|\circ\rangle =
\sum_{0 \le k \le M} [\langle\lambda_1+\mu_1-k, \lambda_2+\mu_2+k-1| \bullet\rangle+ \langle\lambda_1+\mu_1-k-1, \lambda_2+\mu_2+k| \bullet\rangle]\]
\item[(C)] \[\langle\lambda|\bullet\rangle\star \langle \mu|\circ\rangle = \langle\lambda|\circ\rangle\star \langle \mu|\bullet\rangle =
\sum_{0 \le k \le M} \langle\lambda_1+\mu_1-k, \lambda_2+\mu_2+k| \bullet\rangle\]
\item[(D)] $\langle\lambda|\bullet\rangle \star \langle \mu|\bullet\rangle = 0$.
\end{enumerate}
Declare any $\overline{\alpha}$ in the above expressions to be zero if
$(\alpha_1,\alpha_2)$ is not a partition in $2\times (2n-3)$. Such $\overline{\alpha}$ will be called {\bf illegal}.
\end{Definition}

The following specializes Proposition~\ref{prop:RYDtoBKT} to the case $k=2$. We write $f$ instead of $f_2$.

\begin{Proposition} The elements of ${\mathbb Y}_{LG(2,2n)}$ are in bijection with the elements of $P(n-2,n)$ via

$f(\olambda)=\begin{cases}
(\lambda_1,\lambda_2) & \text{if $\olambda=\langle \lambda|\circ\rangle$}\\
(\lambda_1+1,\lambda_2) & \text{if $\olambda=\langle \lambda|\bullet\rangle$}
\end{cases}$
\end{Proposition}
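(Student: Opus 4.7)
The plan is to recognize this proposition as a direct specialization of Proposition~\ref{prop:RYDtoBKT} to the case $k=2$, combined with a translation into the compact notation introduced in Definition~\ref{def:LGproduct}. Since the paper has already noted that the Schubert varieties of $LG(k,2n)$ and $OG(k,2n+1)$ are both indexed by the same set $W^{OG(k,2n+1)}$, and hence $\mathbb{Y}_{LG(2,2n)} = \mathbb{Y}_{OG(2,2n+1)}$ as sets of RYDs with matching Schubert class assignments, Proposition~\ref{prop:RYDtoBKT} with $k=2$ directly supplies a bijection $f_2 : \mathbb{Y}_{LG(2,2n)} \to P(n-2,n)$ that preserves the Schubert variety.

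The only remaining work is to unravel what Proposition~\ref{prop:RYDtoBKT} says in this specialized setting. First, I would observe that for $k=2$ the base region (of types B/C) is a $2\times(2n-3)$ rectangle while the top region is the staircase $(k-1,k-2,\ldots,0) = (1,0)$, which consists of a single root. Therefore an RYD $\olambda \in \mathbb{Y}_{LG(2,2n)}$ decomposes into a pair $(\lambda^{(1)}\mid\lambda^{(2)})$ where $\lambda^{(1)} = (\lambda_1,\lambda_2)$ is a partition in $2\times(2n-3)$, and $\lambda^{(2)}$ is either $(0,0)$ or $(1,0)$ according to whether the unique top-region root is absent or present. This dichotomy is exactly the $\circ$/$\bullet$ marking in the notation $\langle\lambda\mid\circ\rangle$ and $\langle\lambda\mid\bullet\rangle$ of Definition~\ref{def:LGproduct}.

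Next, I would apply the formula $f_k(\lambda) = (\lambda^{(1)}_i + \lambda^{(2)}_i)_{1 \le i \le k}$ from Proposition~\ref{prop:RYDtoBKT}. Componentwise:
\begin{itemize}
\item if $\olambda = \langle\lambda\mid\circ\rangle$, then $\lambda^{(2)} = (0,0)$ and the formula yields $(\lambda_1,\lambda_2)$;
\item if $\olambda = \langle\lambda\mid\bullet\rangle$, then $\lambda^{(2)} = (1,0)$ and the formula yields $(\lambda_1+1,\lambda_2)$.
\end{itemize}
These are precisely the two cases in the statement to be proved, so $f = f_2$ under the identification of notations. Bijectivity onto $P(n-2,n)$ and the preservation of the indexed Schubert variety are then inherited verbatim from Proposition~\ref{prop:RYDtoBKT}.

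There is essentially no obstacle in this proof; it is a bookkeeping exercise translating between two notations for the same underlying object. The only point worth being careful about is confirming that the staircase $(1,0)$ indeed matches the $\bullet/\circ$ marker (and in particular that a $\lambda^{(2)}$ of the form $(0,1)$ is impossible because $\lambda^{(2)}$ must be a \emph{strict} partition contained in $(1,0)$), so that the two cases of the piecewise formula exhaust all possibilities.
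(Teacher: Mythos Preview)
Your proposal is correct and is exactly the approach the paper takes: the proposition is stated in the paper as the specialization of Proposition~\ref{prop:RYDtoBKT} to $k=2$, with no separate proof given beyond that remark. Your unpacking of the notation --- identifying the single top-region root with the $\bullet/\circ$ marker and reading off $\lambda^{(2)}\in\{(0,0),(1,0)\}$ --- is precisely the translation the paper leaves implicit.
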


Let $\oalpha_p$ denote $\langle p,0|{\bullet}/{\circ}\rangle\in \mathbb{Y}_{LG(2,2n)}$, and given $\olambda\in\mathbb{Y}_{LG(2,2n)}$ let $\gamma$ denote $f(\olambda)$. 

\begin{Lemma}\label{lemma:shapesagreeC}
Suppose $p\neq 2n-2$. Then a (legal) shape $\omu$ appears in the expansion $\oalpha_p\star\olambda$ if and only if $f(\omu)$ appears in the expansion $\sigma_p\cdot\sigma_{\gamma}$.
\end{Lemma}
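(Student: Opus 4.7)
The plan is to prove support equality by case analysis on which rule (A)--(D) of Definition~\ref{def:LGproduct} applies to $\oalpha_p\star\olambda$, matching the legal RYD outputs with the $\delta\in P(n-2,n)$ satisfying $\gamma\to\delta$ as given by Lemmas~\ref{lemma:mathbbDC}--\ref{lemma:threshcptsurvivesC}. Since $p\ne 2n-2$ forces $p\le 2n-3$, one has $\oalpha_p=\langle p,0|\circ\rangle$, so case~(D) of Definition~\ref{def:LGproduct} never arises. The bijection $f$ sends $\langle\lambda|\circ\rangle$ to $\gamma$ with $|\gamma|\le 2n-3$ and sends $\langle\lambda|\bullet\rangle$ to $\gamma$ with $\gamma_1\ge n-1$ and $|\gamma|\ge 2n-2$ (by the support condition underlying Corollary~\ref{cor:Bbijection}). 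So one naturally splits into $\olambda=\langle\lambda|\circ\rangle$ (cases~(A) and~(B)) versus $\olambda=\langle\lambda|\bullet\rangle$ (case~(C)).

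In case~(A), where $|\oalpha_p|+|\olambda|\le 2n-3$, the legal outputs $\langle\lambda_1+p-k,\lambda_2+k|\circ\rangle$ for $0\le k\le \min(p,\lambda_1-\lambda_2)$ map under $f$ to $\delta=(\gamma_1+p-k,\gamma_2+k)$ with $|\delta|\le 2n-3$. By Lemma~\ref{lemma:nonthreshC}, any $\gamma\to\delta$ here satisfies $\gamma\subseteq\delta$, so the BKT side consists of horizontal-strip additions of size $p$ to $\gamma$; the parameters $k$ range exactly as above. A direct check using Lemma~\ref{lemma:mathbbDC} confirms condition~(2) is vacuous and condition~(1) is satisfied for precisely these $\delta$. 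Case~(C) is handled analogously: outputs $\langle\lambda_1+p-k,\lambda_2+k|\bullet\rangle$ correspond under $f$ to $\delta=(\gamma_1+p-k,\gamma_2+k)$ with $|\gamma|>2n-3$, and Lemma~\ref{lemma:nonthreshC} again yields $\gamma\subseteq\delta$, matching horizontal-strip additions.

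The substantive case is~(B), where the RYD rule produces two families $\langle\lambda_1+p-k,\lambda_2+k-1|\bullet\rangle$ and $\langle\lambda_1+p-k-1,\lambda_2+k|\bullet\rangle$ for $0\le k\le M$. Under $f$ these become $\delta=(\gamma_1+p-k+1,\gamma_2+k-1)$ and $\delta=(\gamma_1+p-k,\gamma_2+k)$ respectively, all with $|\delta|>2n-3$. Reparametrizing the first family by $j=k-1$, its image becomes $\delta=(\gamma_1+p-j,\gamma_2+j)$ for $-1\le j\le M-1$; the $j=-1$ term is exactly $\gamma^*=(\gamma_1+p+1,\gamma_2-1)$, while for $j\ge 0$ it duplicates shapes from the second family. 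Thus the combined support on the RYD side is $\{\gamma^*\}\cup\{(\gamma_1+p-j,\gamma_2+j):0\le j\le M\}$. On the BKT side, Lemma~\ref{lemma:removeboxC} together with Lemmas~\ref{lemma:threshC} and~\ref{lemma:nonthreshC} shows that $\gamma\to\delta$ with $|\delta|>2n-3$ occurs precisely for this same set (the singleton $\delta=\gamma^*$ when legal, plus all horizontal-strip additions with $\gamma\subseteq\delta$). The main obstacle is verifying the endpoints/legality match: I must check that the ``illegal'' exclusion of Definition~\ref{def:LGproduct} (namely $(\alpha_1,\alpha_2)$ not fitting in $2\times(2n-3)$ with nonnegative parts) corresponds bijectively to failure of $f(\omu)\in P(n-2,n)$ or of the conditions (1), (2) in $\gamma\to\delta$; in particular, I need the $j=-1$ term to be legal exactly when $\gamma^*\in P(n-2,n)$ and the hypotheses of Lemma~\ref{lemma:threshC} hold, which will complete the matching.
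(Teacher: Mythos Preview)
Your approach matches the paper's: split into the three nontrivial cases (A), (B), (C) of Definition~\ref{def:LGproduct} and compare the legal RYD outputs with the set of $\delta$ satisfying $\gamma\to\delta$, invoking Lemmas~\ref{lemma:removeboxC}--\ref{lemma:nonthreshC} to control the BKT side. Cases (A) and (B) are handled essentially as in the paper.

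However, your treatment of case~(C) has a genuine gap. You write that the legal RYD outputs ``correspond under $f$ to $\delta=(\gamma_1+p-k,\gamma_2+k)$ \ldots\ matching horizontal-strip additions,'' as if the situation were parallel to case~(A). It is not. In case~(C) one has $\gamma=f(\langle\lambda|\bullet\rangle)=(\lambda_1+1,\lambda_2)$, so $\gamma_1-\gamma_2=\lambda_1-\lambda_2+1$, whereas the RYD index $k$ runs only up to $M=\min(p,\lambda_1-\lambda_2)$. Thus when $p>\lambda_1-\lambda_2$, the horizontal-strip addition $\delta=(\gamma_2+p,\gamma_1)$ lies in $\Delta=\{\delta\in P(n-2,n):\gamma\subset\delta,\ |\delta|=|\gamma|+p\}$ but is \emph{not} the image of any legal RYD output. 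For the lemma to hold you must show this extra $\delta$ fails $\gamma\to\delta$; the paper does this by checking that $(\gamma_2+p,\gamma_1)$ violates condition~(1). Without that verification, your case~(C) argument only establishes one direction of the ``if and only if.''
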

\begin{proof}
Let $\Delta=\{\delta\in P(n-2,n) : \gamma\subset\delta \text{\ and\ } |\delta|=|\gamma|+p\}$. There are three cases:

($p+|\olambda|\le 2n-3$:) By (A), the shapes in $\oalpha_p\star\olambda$ are those created by adding a horizontal strip of size $p$ to $\lambda$. The image of the legal shapes under $f$ are $\Delta$. Every element of $\Delta$ satisfies (1) and (2), so $\gamma\rightarrow\delta$ for every element $\delta$ of $\Delta$. By Lemma~\ref{lemma:nonthreshC}, there are no other $\delta'\in P(n-2,n)$ such that $\gamma\rightarrow\delta'$.

($|\olambda|>2n-3$:) By (C), the shapes in $\oalpha_p\star\olambda$ are those created by adding a horizontal strip of size $p$ to $\lambda$. If $p\le \lambda_1+1-\lambda_2$ the images of the legal shapes are $\Delta$, otherwise their images are $\Delta\setminus\{(\gamma_2+p,\gamma_1)\}$. If $p\le \lambda_1+1-\lambda_2$ every element of $\Delta$ satisfies (1) and (2), otherwise every element of $\Delta$ satisfies (1) and (2) except for $(\gamma_2+p,\gamma_1)$ which fails (1). Then we are done by Lemma~\ref{lemma:nonthreshC}.  

($|\olambda|\le 2n-3$ and $p+|\olambda|>2n-3$:) By (B), the shapes in $\oalpha_p\star\olambda$ are those created by adding a horizontal strip of size $p$ to $\lambda$ and then removing a box from either the first or second row (to occupy the root of the top region). The images of the legal shapes are $\Delta\cup\{\gamma^*\}$. Every element of $\Delta$ satisfies (1) and (2), and also $\gamma\rightarrow\gamma^*$ by Lemma~\ref{lemma:threshC}. Then we are done by Lemma~\ref{lemma:removeboxC}.
\end{proof}

\subsection{Agreement of Definition~\ref{def:LGproduct} with Theorem \ref{TheoremBKTPieriC}} If $p=2n-2$, then $\oalpha_p=\langle 2n-3,0|\bullet\rangle$ and straightforwardly $\oalpha_p\star\olambda=0$ (and thus by Lemma~\ref{lemma:shapesagreeC} $\sigma_{p}\cdot\sigma_{\gamma}=0$) unless $\olambda=\langle\lambda|\circ\rangle$ and $\lambda_2=0$, i.e., $\olambda=\oalpha_q$ for some $q<2n-2$. Thus we may assume $p<2n-2$. Then by Lemma \ref{lemma:shapesagreeC} it suffices to show that for any (legal) $c\cdot\omu$ appearing in $\oalpha_p\star\olambda$ we have $c=2^{N(\gamma,\delta)}$, where $\delta=f(\omu)$. Since illegal terms do not contribute, and $f(\omu)\in P(n-2,n)$ if and only if $\omu$ is legal, we may assume the terms whose coefficients we examine below are legal. 

\noindent {\bf Case 1:} ($p+|\olambda|\le 2n-3$): By (A), the coefficient of each term in $\oalpha_p\star\olambda$ is $1$. Thus we must show the image $\delta$ of any term has $N(\gamma,\delta)=0$. Since $|\delta|\le 2n-3$, we have $\mathbb{D}=\mathbb{D}_1$. If $\gamma_1\ge n-1$, then since $2n-2-\gamma_1\le \gamma_1+1$ and $2n-3-\delta_2\ge \delta_1$, we have $\mathbb{D}_1\setminus S'_1=\emptyset$, so $N(\gamma,\delta)=0$. Suppose $\gamma_1<n-1$. If $\mathbb{D}_1=\emptyset$, then $N(\gamma,\delta)=0$. Otherwise, $(1:n-1)\in \mathbb{D}_1$ and is not killed, whence $N(\gamma,\delta)=0$ follows since by Lemma~\ref{lemma:bisectionC}, $\mathbb{D}_1$ is not bisected.

\noindent {\bf Case 2:} ($|\olambda|> 2n-3$): By (C), the coefficient of each term in $\oalpha_p\star\olambda$ is $1$. Thus we must show the image $\delta$ of any term has $N(\gamma,\delta)=0$. Since $2n-1-\gamma_1\le \gamma_1+1$, we have $\mathbb{D}_1\setminus T'_1 = \emptyset$, so only $\mathbb{D}_2$ can contribute to $\mathbb{A}$. If $\gamma_2\ge n-2$ then all boxes of $R$ in row $2$ except $(2:n-1)$ are mentioned in (1), hence $N(\gamma,\delta)=0$. Suppose $\gamma_2<n-2$. If $\mathbb{D}_2=\emptyset$, then $N(\gamma,\delta)=0$. Otherwise $(2:n-1)\in \mathbb{D}_2$ and is not killed, and then $N(\gamma,\delta)=0$ follows since by Lemma~\ref{lemma:bisectionC}, $\mathbb{D}_2$ is not bisected.

\noindent {\bf Case 3:} ($|\olambda|\le 2n-3$, $p+|\olambda|>2n-3$): Let $M=\min\{\lambda_1-\lambda_2,p\}$. Then by (B), we compute

\[\oalpha_p\star\olambda = \langle \lambda_1+p,\lambda_2-1|\bullet\rangle + 2\sum_{1\le j\le M}\langle \lambda_1+p-j,\lambda_2-1+j|\bullet\rangle + \langle \lambda_1+p-M-1,\lambda_2+M|\bullet\rangle.\]

First suppose $\delta=f(\langle \lambda_1+p,\lambda_2-1|\bullet\rangle)=\gamma^*$. Then $N(\gamma,\delta)=0$ by Lemmas~\ref{lemma:threshC} and \ref{lemma:specialPiericaseC}. 

Next, suppose $\delta$ is the image of a term in the summation. If $\gamma_1<n-1$, then since $\delta_2<\gamma_1$ a component of $\mathbb{D}$ is bisected by Lemma~\ref{lemma:bisectionC}. Thus $N(\gamma,\delta)=1$ by Corollary~\ref{cor:bisectionC}. Therefore, suppose $\gamma_1\ge n-1$. By Lemma~\ref{lemma:bisectionC} no component of $\mathbb{D}$ is bisected, and since $\delta_2<\gamma_1$ we have $\mathbb{D}_1$ is not connected to $\mathbb{D}_2$. Since $\gamma_2\le n-2$, if $\mathbb{D}_2\neq\emptyset$ then $(2:n-1)\in \mathbb{D}_2$ and is not killed, so $\mathbb{D}_2$ does not contribute to $N(\gamma,\delta)$. Since $\gamma_1\ge n-1$, we have $(1:n-1)\notin \mathbb{D}_1$, and since $\mathbb{D}_1\neq\emptyset$, by Lemma~\ref{lemma:threshcptsurvivesC} not every box of $\mathbb{D}_1$ is killed. Thus $\mathbb{D}_1$ contributes $1$ to $N(\gamma,\delta)$, whence $N(\gamma,\delta)=1$. 

Finally, suppose $\delta=f(\langle \lambda_1+p-M-1,\lambda_2+M|\bullet\rangle)$. Then either $\delta_2=\gamma_1$ or $\mathbb{D}_1=\emptyset$. If $\delta_2=\gamma_1$ then $\mathbb{D}=\mathbb{D}_1\cup\mathbb{D}_2$ is connected, and since $\gamma_2\le n-2$ it uses $(2:n-1)$. By Lemma~\ref{lemma:bisectionC} $\mathbb{D}$ is not bisected, hence $N(\gamma,\delta)=0$. Thus suppose $\mathbb{D}_1=\emptyset$. Then if also $\mathbb{D}_2=\emptyset$, we have $N(\gamma,\delta)=0$. Otherwise, since $\gamma_1\le n-2$ we have $(2:n-1)\in\mathbb{D}_2$, and $(2:n-1)$ is not killed. Then $N(\gamma,\delta)=0$ follows since by Lemma~\ref{lemma:bisectionC}, $\mathbb{D}_2$ is not bisected.

\section{Proof of Theorem~\ref{Thm:Pieriagreement}(II)}

We now follow \cite[pg. 31-33]{BKT:Inventiones}. The Schubert varieties of $OG(2,2n)$ are indexed by the set $\tilde{P}(n-2,n)$ of all pairs $\tilde{\gamma}=(\gamma; {\tt type}(\gamma))$, where $\gamma$ is an element of the set $P(n-2,n)$ of all $(n-2)$-strict partitions inside a $2\times (2n-3)$ rectangle, and also ${\tt type}(\gamma)=0$ if no part of $\gamma$ has size $n-2$ and ${\tt type}(\gamma)\in \{1,2\}$ otherwise. 
The {\bf Pieri classes} of \cite{BKT:Inventiones} are those indexed by $\tilde{\gamma}$ with $\gamma=(p,0)$. If $p \neq n-2$ then the class is denoted by $\sigma_p$. Otherwise if ${\tt type}(\gamma)=1$ (respectively, ${\tt type}(\gamma)=2$) the class is denoted $\sigma_{n-2}$ (respectively, $\sigma'_{n-2}$).

Fix an integer $p\in [1,2n-3]$, and suppose $\gamma, \delta\in P(n-2,n)$ with $|\delta|=|\gamma|+p$. Then the relation $\gamma \rightarrow \delta$ is defined as in the previous section, except now the box in row $r$ and column $c$ of $\gamma$ is {\bf related} to the box in row $r'$ and column $c'$ if $|c-(2n-3)/2|+r = |c'-(2n-3)/2|+r'$.

Define $\mathbb{A}$ as in the previous section. Then define $N'(\gamma,\delta)$ to be the number of connected components of $\mathbb{A}$ (respectively, one less than this number) if $p\le n-2$ (respectively, if $p>n-2$).

Let $g(\gamma,\delta)$ be how many of the first $n-2$ columns of $\delta$ have no $(\delta \setminus \gamma)$-boxes, and let $h(\tilde{\gamma}, \tilde{\delta})=g(\gamma,\delta) + \text{max}(\text{type}(\gamma),\text{type}(\delta))$. If $p \neq n-2$, set $\epsilon_{\tilde{\gamma} \tilde{\delta}}=1$. If $p=n-2$ and $N'(\gamma,\delta)>0$, set $\epsilon_{\tilde{\gamma} \tilde{\delta}}=\epsilon'_{\tilde{\gamma} \tilde{\delta}}=\frac{1}{2}$, while if $N'(\gamma,\delta)=0$, define

$\epsilon_{\tilde{\gamma} \tilde{\delta}} = \begin{cases}
1 & \text{if $h(\tilde{\gamma}, \tilde{\delta})$ is odd} \\
0 & \text{otherwise}
\end{cases}$ \qquad and \qquad
$\epsilon'_{\tilde{\gamma} \tilde{\delta}} = \begin{cases}
1 & \text{if $h(\tilde{\gamma}, \tilde{\delta})$ is even} \\
0 & \text{otherwise.}
\end{cases}$

Then the specialization of the Pieri rule of \cite[Theorem 3.1]{BKT:Inventiones} to the adjoint $OG(2,2n)$ is

\begin{Theorem}[\cite{BKT:Inventiones}]\label{TheoremBKTPieri}(Pieri rule for OG(2,2n))
For any $\tilde{\gamma} \in \tilde{P}(n-2,n)$ and integer $p \in [1,2n-3]$,
\begin{equation}\nonumber
\sigma_p \cdot \sigma_{\tilde{\gamma}} = \sum_{\tilde{\delta}} \epsilon_{\tilde{\gamma} \tilde{\delta}} 2^{N'(\gamma,\delta)}\sigma_{\tilde{\delta}}
\end{equation}
where the sum is over all $\tilde{\delta} \in \tilde{P}(n-2,n)$ with $\gamma \rightarrow \delta$ and ${\tt type}(\gamma)+{\tt type}(\delta) \neq 3$. 
Furthermore, the product $\sigma'_{n-2} \cdot \sigma_{\tilde{\gamma}}$ is obtained by replacing $\epsilon_{\tilde{\gamma} \tilde{\delta}}$ with $\epsilon'_{\tilde{\gamma} \tilde{\delta}}$ throughout.
\end{Theorem}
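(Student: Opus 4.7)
The plan is to mirror the strategy of Section~4 for $LG(2,2n)$, but with care taken for the extra decoration (types $0$, $1$, $2$ and the $\uparrow$/$\downarrow$ markers) that distinguishes $OG(2,2n)$. First I would set up the RYD framework in the coadjoint $k=2$ type~D case: the base region is two double-tailed diamonds of $2n-4$ roots each and the top region is a single root, so an RYD $\olambda\in\mathbb{Y}_{OG(2,2n)}$ can be written $\langle\lambda|*\rangle$ where $*\in\{\bullet,\circ\}$ records usage of the top root, $\lambda=(\lambda_1,\lambda_2)$ is a partition in $2\times(2n-4)$, and any row $\lambda_i$ equal to $n-2$ carries an $\uparrow$/$\downarrow$ label. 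I would reprise the product $\star$ from \cite[Theorem 5.3]{Searles.Yong} and specialize $F_2$ from Proposition~\ref{prop:RYDtoBKTD} to get an explicit bijection $\mathbb{Y}_{OG(2,2n)}\to\tilde P(n-2,n)$ in this case, noting how the $\uparrow$/$\downarrow$ labels on $\lambda^{(1)}$ correspond to ${\tt type}(\gamma)\in\{1,2\}$ when a part of $\gamma=F_2(\olambda)$ equals $n-2$, and ${\tt type}(\gamma)=0$ otherwise.

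Next I would establish a type~D analog of Lemma~\ref{lemma:shapesagreeC}: a (legal) $\omu$ appears in $\oalpha_p\star\olambda$ if and only if $F_2(\omu)$ appears in $\sigma_p\cdot\sigma_{F_2(\olambda)}$, with ${\tt type}({\tt im}(\omu))+{\tt type}(F_2(\olambda))\ne 3$. The argument parallels Case~(A)/(B)/(C) in Section~4, split by whether $p+|\olambda|$ is below, at, or above the threshold $2n-4$, using the type~D analogs of Lemmas~\ref{lemma:removeboxC}--\ref{lemma:nonthreshC} (which hold by the same relatedness computation, now with center $(2n-3)/2$ instead of $n-1$). The main new bookkeeping is tracking the $\uparrow$/$\downarrow$ markers through the product and checking that the clash rule ${\tt type}(\gamma)+{\tt type}(\delta)\ne 3$ corresponds exactly to the RYD rule that $\langle\lambda|\bullet\rangle^\uparrow\star\langle\mu|\bullet\rangle^\downarrow$-type terms vanish.

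The combinatorial heart is proving the coefficient agreement. I would redevelop the $S$, $T$, $S'_i$, $T'_i$ analysis of killed boxes (Lemma~\ref{lemma:killedC}), the bisection criterion (Lemma~\ref{lemma:bisectionC}), and its corollary, all now with respect to the shifted center $(2n-3)/2$ that governs type~D relatedness. With those in hand, when $p\ne n-2$, so $\epsilon_{\tilde\gamma\tilde\delta}=1$, the proof reduces to case analysis exactly parallel to Cases~1--3 of Section~4: subthreshold (coefficient $1$ from $\star$, $N'=0$); superthreshold but lower rectangle (coefficient $1$, $N'=0$); and the bridging case (coefficients $1,2,\ldots,2,1$) where bisection upgrades some components so $N'=1$ matches the factor of $2$, while the boundary terms $\gamma^*$ and the ``wrap-around'' term pick up $N'=0$.

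The hard part will be the exceptional $p=n-2$ case, where $\epsilon_{\tilde\gamma\tilde\delta}$ can be $\tfrac12$, $0$, or $1$ depending on the parity of $h(\tilde\gamma,\tilde\delta)$ and on $N'(\gamma,\delta)$. Here I would separately treat $\sigma_{n-2}$ (where $\oalpha_p$ carries $\uparrow$) and $\sigma'_{n-2}$ (where $\oalpha_p$ carries $\downarrow$), and check that when $N'(\gamma,\delta)=0$ the parity of $g(\gamma,\delta)+\max({\tt type}(\gamma),{\tt type}(\delta))$ matches precisely whether the $\star$-product produces $\omu$ with label $\bullet$ vs.\ $\circ$ and with $\uparrow$ vs.\ $\downarrow$, so that the Boolean $\epsilon_{\tilde\gamma\tilde\delta}\in\{0,1\}$ agrees with the $\{0,1\}$ coefficient from $\star$; and when $N'(\gamma,\delta)>0$, the two $\tilde\delta$'s of types $1$ and $2$ sharing the same underlying $\gamma$ together carry coefficient $2^{N'(\gamma,\delta)}$ from $\star$, which splits evenly as $\tfrac12\cdot 2^{N'(\gamma,\delta)}+\tfrac12\cdot 2^{N'(\gamma,\delta)}$ on the BKT side. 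Verifying this parity/type bookkeeping, which is invisible in type~C, is where the bulk of the work will lie.
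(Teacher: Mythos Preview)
This theorem is not proved in the paper at all: it is a citation of the Pieri rule from \cite{BKT:Inventiones}, specialized to $OG(2,2n)$, and is used as an input rather than established. The paper states it without proof and then spends the rest of Section~5 proving Theorem~\ref{Thm:Pieriagreement}(II), namely that the RYD product $\star$ of \cite{Searles.Yong} agrees with this Pieri rule under the bijection $F_2$.

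Your proposal is in fact a reasonable sketch of a proof of Theorem~\ref{Thm:Pieriagreement}(II), not of the cited Pieri rule itself. If that is what you intended, your outline is broadly in line with what the paper does in Section~5: it develops type~D analogs of the Section~4 lemmas (Lemmas~\ref{lemma:removebox}--\ref{lemma:threshcptsurvives}), proves a shapes-agree lemma (Lemma~\ref{lemma:shapesagree}), and then runs a case analysis split by whether $p>n-2$, $p<n-2$, or $p=n-2$, with subcases according to the position of $\lambda_1$ relative to $n-2$. One correction to your plan: the paper handles the type bookkeeping not via a ``clash rule'' on $\bullet$-shapes but through the disambiguation steps (iii.2), (iii.3a), (iii.3b) of Definition~\ref{def:starproduct}, and the $p=n-2$ case also requires the separate explicit formula for $\langle n-2,0|\circ\rangle^{\uparrow}\star\langle n-2,0|\circ\rangle^{\uparrow/\downarrow}$ at the top of that definition. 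But if your task was literally to prove the displayed Pieri formula, there is nothing to do beyond citing \cite[Theorem~3.1]{BKT:Inventiones}.
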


Let $(r:c)$ denote the box in row $r$, column $c$ of $2\times (2n-3)$. Let $L$ denote the first $n-2$ columns of $2\times(2n-3)$ and $R$ the latter $n-1$ columns. Given $\gamma,\delta\in P(n-2,n)$ with $|\delta|=|\gamma|+p$, recall from the previous section the definitions of $\mathbb{D}_1$, $\mathbb{D}_2$ and $\mathbb{D}$. 
Let $\gamma^*$ denote the shape $(\gamma_1+p+1,\gamma_2-1)$. The following three lemmas are proved similarly to (respectively) Lemmas~\ref{lemma:removeboxC}, \ref{lemma:threshC} and \ref{lemma:nonthreshC}.

\begin{Lemma}\label{lemma:removebox}
If $\gamma \rightarrow \delta$ and $\gamma \not\subseteq \delta$, then $\delta = \gamma^*$.
\end{Lemma}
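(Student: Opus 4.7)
The plan is to follow the template of Lemma~\ref{lemma:removeboxC} with the numerics adjusted for Type D: the bounding rectangle is now $2\times(2n-3)$ and the relation has center $(2n-3)/2$ rather than $n-1$. Since $\gamma\not\subseteq\delta$, at least one box of $\gamma$ must be removed when forming $\delta$. The removed boxes form a vertical strip confined to the first $n-2$ columns, and since $\gamma$ has only two rows, at most one box per row is removed.

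First I would rule out the possibility of removing boxes from both rows, by the same reasoning as in the Type C proof: after such a removal the subsequent horizontal strip of size $p+2$ cannot be added in a way that keeps the shape a partition and simultaneously satisfies the horizontal-strip/vertical-strip structure required by $\gamma\to\delta$. So exactly one box is removed. If it is $(2:\gamma_2)$, then the horizontal strip of size $p+1$ is forced to lie in row 1 and gives exactly $\delta=(\gamma_1+p+1,\gamma_2-1)=\gamma^*$, which is the desired conclusion.

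If the removed box is instead $(1:\gamma_1)$, the only candidate for $\delta$ is $(\gamma_1-1,\gamma_2+p+1)$, and I would show this either fails to be a partition or violates condition~(2). By~(2), the removed $\gamma$-box $(1:\gamma_1)$ must be related to exactly one $(\delta\setminus\gamma)$-box; since all $(\delta\setminus\gamma)$-boxes lie in row 2, I solve $|c'-(2n-3)/2|+2=|\gamma_1-(2n-3)/2|+1$ for $c'$ and compare with $\mathbb{D}_2=\{(2:c):\gamma_2+1\le c\le\gamma_2+p+1\}$, using the partition inequality $\gamma_2+p+1\le\gamma_1-1$ to conclude the required partner column lies outside $\mathbb{D}_2$.

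The main obstacle, as in the Type C case, is this last check: the half-integer center $(2n-3)/2$ means the reflection of column $\gamma_1$ lands at column $2n-3-\gamma_1$, and one must verify carefully that for every admissible $\gamma$ this column is either outside the bounding rectangle or outside $\mathbb{D}_2$, so that~(2) genuinely fails. Once that verification is complete, the only surviving case is $\delta=\gamma^*$.
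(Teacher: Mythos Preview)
Your approach is correct and is essentially the paper's own: the paper simply says this lemma is proved ``similarly to Lemma~\ref{lemma:removeboxC}'', and your plan reproduces that argument with the Type~D numerics. One small slip to fix when you write it up: solving $|c'-(2n-3)/2|+2=|\gamma_1-(2n-3)/2|+1$ for the row-$2$ partners gives $c'=\gamma_1+1$ or $c'=2n-4-\gamma_1$, not $2n-3-\gamma_1$ (the latter is the row-$1$ reflection); either value still exceeds $\gamma_2+p+1\le\gamma_1-1$, so condition~(2) fails just as you claim.
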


\begin{Lemma}\label{lemma:thresh}
Suppose $|\gamma|\le 2n-4$ and $p+|\gamma|>2n-4$. If $\gamma^*\in P(n-2,n)$, then $\gamma \rightarrow \gamma^*$.
\end{Lemma}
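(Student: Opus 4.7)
The plan is to adapt the argument of Lemma~\ref{lemma:threshC} to the modified definition of ``related'' used for $OG(2,2n)$, which replaces $|c-(n-1)|+r$ by $|c-(2n-3)/2|+r$ and works in a $2\times (2n-3)$ rectangle. Set $\delta = \gamma^* = (\gamma_1+p+1,\gamma_2-1)$. Since $\delta \setminus \gamma$ consists of a horizontal strip added to row 1 (the box $(2:\gamma_2)$ being the unique $(\gamma\setminus\delta)$-box), every $\mathbb{D}$-box lies in row 1, so no $\gamma$-box in the first $n-2$ columns has a $(\delta\setminus\gamma)$-box below it, and condition (1) is vacuous.

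For condition (2), I would identify explicitly the $(\delta\setminus\gamma)$-boxes related to $(2:\delta_2+1)=(2:\gamma_2)$ and to the box $(1:\gamma_2)$ directly above it. Under the type~D relation, $(2:\delta_2+1)$ is related to $(1:2n-3-\delta_2)$, and $(1:\gamma_2)$ is related to $(1:2n-3-\gamma_2)$. I then verify that both of these lie in $\mathbb{D}_1$: the inequality $\gamma_1 + 1 \le 2n-3-\gamma_2$ needed to place $(1:2n-3-\gamma_2)$ in $\mathbb{D}_1$ is exactly the hypothesis $|\gamma|\le 2n-4$, while the inequality $2n-3-\delta_2 \le \delta_1 = \gamma_1+p+1$ needed to place $(1:2n-3-\delta_2)$ in $\mathbb{D}_1$ is equivalent to $p+|\gamma|\ge 2n-3$, which is the hypothesis $p+|\gamma|>2n-4$. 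Finally, $\gamma_2 > \delta_2$ forces $2n-3-\gamma_2 < 2n-3-\delta_2$, so these two boxes are distinct, and both lie in row 1, satisfying the requirement in (2) that the related $(\delta\setminus\gamma)$-boxes share a row.

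Since the argument is a direct numerical translation of Lemma~\ref{lemma:threshC}, there is no genuine obstacle; the only point requiring a bit of care is verifying that the shifted centre $(2n-3)/2$ (rather than $n-1$) produces exactly the right half-integer bookkeeping, so that the two key inequalities line up with the hypotheses $|\gamma|\le 2n-4$ and $p+|\gamma|>2n-4$. Once that bookkeeping is confirmed, the claim $\gamma \to \gamma^*$ follows immediately from the definition.
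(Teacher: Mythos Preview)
Your approach is essentially the paper's own: the paper states that Lemma~\ref{lemma:thresh} is proved ``similarly to'' Lemma~\ref{lemma:threshC}, and your computation of the related boxes $(1:2n-3-\delta_2)$ and $(1:2n-3-\gamma_2)$ together with the chain $\gamma_1+1\le 2n-3-\gamma_2<2n-3-\delta_2\le \delta_1$ is exactly the type~D translation of the argument in that lemma.

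One small slip: your justification for condition~(1) is garbled. Condition~(1) concerns $\gamma$-boxes in $L$ having no $\delta$-box below them, not $\gamma$-boxes having a $(\delta\setminus\gamma)$-box below them; so the set in question is not empty (for instance, $(1:\gamma_2)$ lies in it). The correct reason (1) holds is the one the paper gives in Lemma~\ref{lemma:threshC}: since all $(\delta\setminus\gamma)$-boxes lie in row~1, each $\gamma$-box in $L$ is related to at most one of them (the relation determines the row-1 partner in $R$ uniquely, and any row-1 partner in $L$ would lie in column $\le\gamma_1$ and hence already be a $\gamma$-box). This is easy to fix and does not affect the rest of your argument.
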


\begin{Lemma}\label{lemma:nonthresh}
If either $|\delta|\le 2n-4$ or $|\gamma|>2n-4$, then $\gamma \rightarrow \delta \Rightarrow \gamma \subseteq \delta$. In particular, $\delta$ is obtained from $\gamma$ without removing any box of $\gamma$.
\end{Lemma}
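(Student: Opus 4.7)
The plan is to mirror the argument of Lemma~\ref{lemma:nonthreshC}, replacing the type-C reflection axis $n-1$ by the type-D axis $(2n-3)/2$. I would suppose for contradiction that $\gamma\rightarrow\delta$ but $\gamma\not\subseteq\delta$. By Lemma~\ref{lemma:removebox}, this forces $\delta=\gamma^*=(\gamma_1+p+1,\gamma_2-1)$, so the unique $(\gamma\setminus\delta)$-box is $(2:\gamma_2)$ while all $(\delta\setminus\gamma)$-boxes $(1:\gamma_1+1),\ldots,(1:\delta_1)$ lie in row~$1$. Condition~(2) of the Pieri relation then demands that both $(2:\gamma_2)$ and the box $(1:\gamma_2)$ above it be related to exactly one $(\delta\setminus\gamma)$-box each.

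The next step is a direct computation from $|c-(2n-3)/2|+r=|c'-(2n-3)/2|+r'$ with $r'=1$. The reflection $c\mapsto 2n-3-c$ identifies the non-trivial row-$1$ partner of $(1:\gamma_2)$ as $(1:2n-3-\gamma_2)$, while (when $\gamma_2\le n-2$) the row-$1$ partners of $(2:\gamma_2)$ are $(1:\gamma_2-1)$ and $(1:2n-2-\gamma_2)$. Only $(1:2n-3-\gamma_2)$ and $(1:2n-2-\gamma_2)$ can possibly lie in $\delta\setminus\gamma$, since $(1:\gamma_2)$ and $(1:\gamma_2-1)$ are $\gamma$-boxes. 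In the case $|\gamma|>2n-4$, the inequality $\gamma_1+\gamma_2\ge 2n-3$ yields $\gamma_1+1>2n-3-\gamma_2$, so $(1:2n-3-\gamma_2)$ already lies in $\gamma$ and cannot be a $(\delta\setminus\gamma)$-box, violating~(2) for $(1:\gamma_2)$. In the case $|\delta|\le 2n-4$, one first observes that $\gamma_2\ge n-1$ would force $p<0$ (since then $\gamma_1\ge n$ gives $|\delta|\ge 2n-1$), hence $\gamma_2\le n-2$ and the partner calculation applies; then $\delta_1\le 2n-4-\delta_2$ gives $\delta_1<2n-2-\gamma_2$, so $(1:2n-2-\gamma_2)$ is not even a $\delta$-box, violating~(2) for $(2:\gamma_2)$. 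Either way we reach a contradiction, so $\gamma\subseteq\delta$; the ``in particular'' clause then follows from the definition of $\gamma\to\delta$, which in the absence of removals requires $\delta$ to arise from $\gamma$ by adding a horizontal strip.

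The only real departure from Lemma~\ref{lemma:nonthreshC} is the shifted half-integer axis, which pairs same-row columns via $c\leftrightarrow 2n-3-c$ instead of $c\leftrightarrow 2n-2-c$ and alters which columns appear in the key inequalities. I expect no serious obstacle beyond verifying these reflected columns explicitly and checking the automatic subcase $\gamma_2\le n-2$ under $|\delta|\le 2n-4$, which is presumably why the author writes that the result is ``proved similarly'' to the type-C case.
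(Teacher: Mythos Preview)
Your argument is correct and follows essentially the same route as the paper: the paper states that Lemma~\ref{lemma:nonthresh} is proved similarly to Lemma~\ref{lemma:nonthreshC}, and your proof is precisely that translation, reducing to $\delta=\gamma^*$ via Lemma~\ref{lemma:removebox} and then showing condition~(2) fails by computing the row-$1$ partners under the shifted axis $c\leftrightarrow 2n-3-c$. One small simplification: your separate verification that $\gamma_2\le n-2$ under $|\delta|\le 2n-4$ is unnecessary, since the definition of $\gamma\to\delta$ already requires the removed vertical strip to lie in the first $n-2$ columns, so $(2:\gamma_2)\in\gamma\setminus\delta$ forces $\gamma_2\le n-2$ automatically.
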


Given $\gamma\rightarrow\delta$, recall from the previous section the definition of when a box of $\mathbb{D}$ is {\bf killed} and when a connected component $\mathbb{D}$ is {\bf bisected}. If also $\gamma\subset\delta$, recall the definitions of $S$ and $T$.

\begin{Lemma}\label{lemma:specialPiericase}
If $\gamma^*\in P(n-2,n)$ and $\gamma\rightarrow\gamma^*$, then $N'(\gamma,\delta)=1$ if $\gamma_1<n-2$ and $p\le n-2$, and $N'(\gamma,\delta)=0$ otherwise.
\end{Lemma}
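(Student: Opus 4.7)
The plan is to compute $\mathbb{A}$ explicitly and count its connected components. Since $\gamma\not\subseteq\gamma^*$ (the box $(2:\gamma_2)$ is removed), Lemma~\ref{lemma:nonthresh} forces $|\gamma|\le 2n-4$ and $|\delta|>2n-4$; combined with $\gamma^*\in P(n-2,n)$ these give the bounds $2n-2-\gamma_2\le\delta_1\le 2n-3$. Also, $\gamma_1+\gamma_2\le 2n-4$ together with $\gamma_2\le\gamma_1$ yields $\gamma_2\le n-2$, and for $\gamma^*$ to be a partition we need $\gamma_2\ge 1$. Since $\delta_2=\gamma_2-1<\gamma_2$, no box is added in row~$2$, so $\mathbb{D}_2=\emptyset$ and $\mathbb{A}\subseteq\mathbb{D}_1$.

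Next, I would adapt the type~D analog of Lemma~\ref{lemma:killedC} using the relation $|c-(2n-3)/2|+r=|c'-(2n-3)/2|+r'$. A direct calculation shows that a $\gamma$-box $(1:c_0)$ in $S=\{(1:c_0):\gamma_2\le c_0\le\min(\gamma_1,n-2)\}$ is related to the row-$1$ box $(1:2n-3-c_0)\in\mathbb{D}_1$, while a $\gamma$-box $(2:c_0)\in T=\{(2:c_0):1\le c_0\le\gamma_2\}$ is related to $(1:2n-2-c_0)\in\mathbb{D}_1$. Hence $S\cup T$ kills exactly the boxes of $\mathbb{D}_1$ in columns $[2n-3-\min(\gamma_1,n-2),\,2n-3-\gamma_2]\cup[2n-2-\gamma_2,\,2n-3]$; crucially, and unlike the type B/C situation of Corollary~\ref{cor:killedC}, these two intervals abut, so their union is the single interval $[2n-3-\min(\gamma_1,n-2),\,2n-3]$.

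Finally, case split on $\gamma_1$. If $\gamma_1\ge n-2$, then $\delta_1\le 2n-3$ forces $p\le 2n-4-\gamma_1\le n-2$ (so the ``otherwise'' clause applies), and $\mathbb{D}_1=[\gamma_1+1,\delta_1]\subseteq[n-1,2n-3]$ is entirely killed, giving $\mathbb{A}=\emptyset$ and $N'(\gamma,\delta)=0$. If $\gamma_1<n-2$, then $\mathbb{D}_1=[n-1,\delta_1]$, and deleting the killed interval $[2n-3-\gamma_1,2n-3]$ leaves the single connected component $\mathbb{A}=[n-1,2n-4-\gamma_1]$, so $N'(\gamma,\delta)=1$ when $p\le n-2$ and $N'(\gamma,\delta)=1-1=0$ when $p>n-2$. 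The main technical obstacle is transferring the kill analysis from Section~4 to type~D: the shift of the relation by $(2n-3)/2$ rather than $n-1$ changes precisely which $\mathbb{D}$-boxes are related to $S$- and $T$-boxes, and it is the resulting contiguous abutting of the two killing intervals (which replaces the type B/C ``gap'') that prevents any bisection and keeps $\mathbb{A}$ connected in the relevant regime.
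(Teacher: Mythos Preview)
Your argument is correct and follows essentially the same approach as the paper: a case split on whether $\gamma_1\ge n-2$, followed by identifying $\mathbb{A}$ and counting its components. The paper argues more tersely (when $\gamma_1\ge n-2$ it simply notes that every box of $R$ is mentioned in (1) or (2), and when $\gamma_1<n-2$ it asserts without computation that the killed boxes form a terminal segment of $\mathbb{D}_1$), whereas you make the killing intervals $[2n-3-\min(\gamma_1,n-2),\,2n-3-\gamma_2]$ and $[2n-2-\gamma_2,\,2n-3]$ fully explicit and observe that they abut. One small expository point: in the $\gamma_1\ge n-2$ branch, the parenthetical ``so the `otherwise' clause applies'' reads as if it follows from $p\le n-2$, but in fact the ``otherwise'' clause is triggered by $\gamma_1\ge n-2$; the bound $p\le n-2$ is needed only so that $N'(\gamma,\delta)$ equals the number of components of $\mathbb{A}$ rather than one less.
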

\begin{proof}
Let $\delta=\gamma^*$. If $\gamma_1\ge n-2$, all boxes of $R$ are mentioned in (1) or (2), so $N'(\gamma,\delta)=0$. Suppose $\gamma_1<n-2$. Then $\mathbb{D}_2=\emptyset$, so $\mathbb{D}=\mathbb{D}_1$. Here $(1:n-1)$ is a $\mathbb{D}_1$-box and is not killed. By (1), (2) it is clear the $\mathbb{D}_1$-boxes killed are the last $l$ boxes of $\mathbb{D}_1$ for some $l>0$, hence $\mathbb{D}_1$ is not bisected. Thus $N'(\gamma,\delta)=0$ if $p>n-2$, and $N'(\gamma,\delta)=1$ if $p\le n-2$.
\end{proof}

\begin{Lemma}\label{lemma:killed} 
Let $\gamma\rightarrow\delta$ with $\gamma\subset\delta$.
Suppose $(1:c)\in\mathbb{D}_1$. Then 
\begin{itemize}
\item $(1:c)$ is killed by $S$ if and only if $(1:c)\in S'_1 = \{(1:c') : 2n-3-\gamma_1\le c'\le 2n-4-\delta_2\}$
\item $(1:c)$ is killed by $T$ if and only if $(1:c) \in T'_1 = \{(1:c') : 2n-2-\gamma_2\le c'\le 2n-3\}$
\end{itemize}
Suppose $(2:c)\in\mathbb{D}_2$. Then
\begin{itemize}
\item $(2:c)$ is never killed by $S$
\item $(2:c)$ is killed by $T$ if and only if $(2:c) \in T'_2 = \{(2:c') : 2n-3-\gamma_2\le c'\le 2n-4\}$
\end{itemize}
\end{Lemma}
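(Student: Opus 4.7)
The plan is to translate the routine relatedness calculation of Lemma~\ref{lemma:killedC} to the type D setting. The key observation is that in type D, the center $(2n-3)/2$ is a half-integer lying strictly between the columns of $L$ (at most $n-2$) and those of $R$ (at least $n-1$). Consequently, for any $\mathbb{D}$-box $(r:c)\in R$ and any potential killer $(r'':c'')\in L$, both $c-(2n-3)/2$ and $(2n-3)/2-c''$ are positive, so each absolute value in the relatedness equation $|c-(2n-3)/2|+r=|c''-(2n-3)/2|+r''$ opens directly, yielding a single linear equation that determines $c''$ uniquely from $c$.

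First I would handle $\mathbb{D}_1$. The same-row relatedness of $(1:c)\in\mathbb{D}_1$ to an $S$-killer $(1:c'')$ collapses to $c+c''=2n-3$; the row-offset relatedness to a $T$-killer $(2:c'')$ collapses to $c+c''=2n-2$. Pushing the intervals $c''\in[\delta_2+1,\gamma_1]$ (for $S$) and $c''\in[1,\gamma_2]$ (for $T$) through these equations produces precisely the stated intervals $S'_1$ and $T'_1$. One should also observe that these linear maps send the constraint $c''\le n-2$ (i.e.\ $c''\in L$) to the constraint $c\ge n-1$ (i.e.\ $c\in R$), so no further intersection is needed.

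For $\mathbb{D}_2$, the one substantive point — and what I would flag as the main (mild) obstacle — is the vanishing of $S$-killers: the existence of a box in $\mathbb{D}_2$ forces $\delta_2\ge n-1$, yet an element of $S$ must satisfy $\delta_2+1\le c''\le n-2$, an impossibility. So $(2:c)$ is never killed by $S$. The $T$-killing is then the same-row computation $c+c''=2n-3$, sending $c''\in[1,\gamma_2]$ to the interval $T'_2$.

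No real technical obstacle arises; the proof is a mechanical verification mirroring Lemma~\ref{lemma:killedC}. Unlike the type C situation, no $\mathbb{D}$-box is intrinsically unkillable (there is no central column playing the role of $n-1$), so the three bullets in the lemma fully characterize the killing relation with no exceptional boxes to isolate at the outset.
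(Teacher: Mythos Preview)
Your proposal is correct and follows exactly the paper's approach: the paper's proof is a two-sentence version of what you wrote, observing that $\mathbb{D}_2\neq\emptyset$ forces $\delta_2>n-2$ (hence $S=\emptyset$) and declaring that the rest ``follows from the definition of being related.'' Your expanded computation of the linear relations $c+c''=2n-3$ and $c+c''=2n-2$ is precisely what that phrase abbreviates, and your remark that no column is intrinsically unkillable (in contrast to the type~C column $n-1$) correctly explains why Lemma~\ref{lemma:killed} has no exceptional clause.
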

\begin{proof}
That $(2:c)$ is never killed by $S$ follows since the existence of a $\mathbb{D}$-box in row 2 implies $\delta_2>n-2$ and thus $S=\emptyset$. The remaining points follow from the definition of being related.
\end{proof}

\begin{Corollary}\label{cor:killed}
Suppose $\gamma\rightarrow\delta$ with $\gamma\subset\delta$. Then if $(1:2n-3-\delta_2)$ is a $\mathbb{D}_1$-box, it is not killed.
\end{Corollary}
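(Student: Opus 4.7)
The plan is to derive Corollary~\ref{cor:killed} as an immediate consequence of Lemma~\ref{lemma:killed}, by checking that the column index $c = 2n-3-\delta_2$ falls outside both $S'_1$ and $T'_1$. This mirrors the proof of Corollary~\ref{cor:killedC} in the $LG(2,2n)$ section, just with the column ranges shifted by one to reflect that the base region for $OG(2,2n)$ is a $2\times(2n-3)$ rectangle rather than a $2\times(2n-2)$ rectangle.

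First I would note that $S'_1$ requires $c' \le 2n-4-\delta_2$ by Lemma~\ref{lemma:killed}. Since $2n-3-\delta_2 > 2n-4-\delta_2$, the box $(1:2n-3-\delta_2)$ cannot lie in $S'_1$. Next, $T'_1$ requires $2n-2-\gamma_2 \le c'$. For $c'=2n-3-\delta_2$, this translates to $\delta_2 \le \gamma_2 - 1$, which contradicts the hypothesis $\gamma \subseteq \delta$ (which forces $\gamma_2 \le \delta_2$). Hence $(1:2n-3-\delta_2) \notin T'_1$ either.

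Since the only boxes of $\gamma$ that can kill a $\mathbb{D}_1$-box are those in $S$ or $T$ (by the definition of ``killed'' together with Lemma~\ref{lemma:killed}), and we have just shown $(1:2n-3-\delta_2)$ is killed by neither, the corollary follows. The argument is entirely mechanical given Lemma~\ref{lemma:killed}, so I anticipate no obstacle beyond carefully writing out the two defining inequalities and using $\gamma_2 \le \delta_2$ in the second.
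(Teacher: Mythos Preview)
Your proof is correct and follows the same approach as the paper: you verify that $2n-3-\delta_2$ lies strictly to the right of $S'_1$ (since $2n-3-\delta_2 > 2n-4-\delta_2$) and strictly to the left of $T'_1$ (since $\gamma_2 \le \delta_2$ gives $2n-3-\delta_2 < 2n-2-\gamma_2$), exactly as the paper does via the chain $2n-4-\delta_2 < 2n-3-\delta_2 < 2n-2-\gamma_2$.
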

\begin{proof}
Since $2n-4-\delta_2<2n-3-\delta_2<2n-2-\gamma_2$, $(1:2n-3-\delta_2)$ is not in $S'_1$ or $T'_1$. 
\end{proof}

\begin{Lemma}\label{lemma:bisection}
A connected component of $\mathbb{D}$ is bisected if and only if all of the following hold:
\begin{itemize}
\item[(i)] $|\gamma|\le 2n-4$ and $|\delta|>2n-4$
\item[(ii)] $\gamma\subseteq\delta$
\item[(iii)] $\gamma_1<n-2$
\item[(iv)] $\delta_2<\gamma_1.$
\end{itemize} 
\end{Lemma}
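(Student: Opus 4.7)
The strategy is to parallel the proof of Lemma~\ref{lemma:bisectionC}, adjusting for the fact that the ambient rectangle here is $2\times(2n-3)$ and that the ``related'' relation reflects across column $(2n-3)/2$ rather than column $n-1$. The killing intervals are provided by Lemma~\ref{lemma:killed}, and Corollary~\ref{cor:killed} furnishes the key unkilled box $(1:2n-3-\delta_2)$ whenever it lies in $\mathbb{D}_1$.

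For the $(\Rightarrow)$ direction I argue by contrapositive. If (ii) fails then $\delta = \gamma^*$ by Lemma~\ref{lemma:removebox}, and the argument used in Lemma~\ref{lemma:specialPiericase} shows the killed $\mathbb{D}_1$-boxes form a suffix, so no component is bisected. Assuming (ii) holds, Lemma~\ref{lemma:killed} shows that $T$ always kills a (possibly empty) suffix of each component, so only $S$ can bisect. If (iv) fails then $S = \emptyset$. If (iii) fails, we may assume $\mathbb{D}_2 = \emptyset$ (otherwise $\delta_2 \ge n-1$ forces $S = \emptyset$), and the inequality $\gamma_1 \ge n-2$ gives $2n-3-\gamma_1 \le \gamma_1+1$, so $S'_1 \cap \mathbb{D}_1$ is a prefix. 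Finally, if (iii) and (iv) hold but (i) fails, (iii) forces $|\gamma| \le 2n-6$, so we must have $|\delta| \le 2n-4$; combined with $\delta_1 + \delta_2 = |\delta| \le 2n-4$ this makes $S'_1 \cap \mathbb{D}_1$ a suffix of $\mathbb{D}_1$. In every case $\mathbb{D}_1 \setminus (S'_1 \cup T'_1)$ remains connected, so no bisection occurs.

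For the $(\Leftarrow)$ direction, conditions (iii) and (iv) give $\delta_2 < \gamma_1 < n-2$, so $\mathbb{D}_2 = \emptyset$ and $\mathbb{D} = \mathbb{D}_1$; condition (i) together with $\delta_2 < n-2$ yields $\delta_1 \ge 2n-3-\delta_2 > n-1$, so $(1:n-1)$ and $(1:2n-3-\delta_2)$ are both $\mathbb{D}_1$-boxes. Direct checks using Lemma~\ref{lemma:killed} show that $(1:n-1)$ lies in neither $S'_1$ nor $T'_1$ (since $2n-3-\gamma_1 > n-1$ by (iii) and $2n-2-\gamma_2 > n-1$ by $\gamma_2 \le \gamma_1 < n-1$), and Corollary~\ref{cor:killed} handles $(1:2n-3-\delta_2)$. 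Condition (iv) rewrites as $\delta_2 \le \gamma_1 - 1$, which is precisely the inequality $2n-3-\gamma_1 \le 2n-4-\delta_2$ making $S'_1$ nonempty; since $2n-3-\gamma_1 > n-1$ and $2n-4-\delta_2 < 2n-3-\delta_2$, this $S'_1$-killed box lies strictly between the two unkilled ones, exhibiting the required bisection.

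The main bookkeeping hurdle is ensuring that the shifted column parameters of the type D rectangle still produce the chain of strict inequalities $n-1 < 2n-3-\gamma_1 \le 2n-4-\delta_2 < 2n-3-\delta_2 \le \delta_1$ under precisely the four hypotheses (i)--(iv), and that the interval $S'_1$ continues to act as a prefix- or suffix-killer in each case-analysis branch of the converse. Once these inequalities are verified, the overall logical flow is identical to Lemma~\ref{lemma:bisectionC}.
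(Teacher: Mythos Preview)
Your proposal is correct and follows essentially the same approach as the paper, which simply states that the proof is ``similar to the proof of Lemma~\ref{lemma:bisectionC}, using Corollary~\ref{cor:killed}.'' You have carried out precisely this adaptation, adjusting the column indices for the $2\times(2n-3)$ rectangle and replacing the type~C killing intervals and unkilled witness box with their type~D analogues from Lemma~\ref{lemma:killed} and Corollary~\ref{cor:killed}.
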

\begin{proof}
Similar to the proof of Lemma~\ref{lemma:bisectionC}, using Corollary~\ref{cor:killed}.
\end{proof}

\begin{Corollary}\label{cor:bisection}
If a connected component of $\mathbb{D}$ is bisected, then $\mathbb{A}$ has two connected components.
\end{Corollary}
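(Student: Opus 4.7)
\medskip

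\noindent\emph{Proof plan for Corollary~\ref{cor:bisection}.} The overall strategy is to adapt the argument that gave Corollary~\ref{cor:bisectionC} in the type C case. First I would invoke Lemma~\ref{lemma:bisection} to extract the four necessary conditions (i)--(iv): in particular $|\gamma|\le 2n-4$, $|\delta|>2n-4$, $\gamma_1<n-2$, and $\delta_2<\gamma_1$. From (iii)--(iv) I get $\delta_2<n-2$, which forces $\mathbb{D}_2=\emptyset$ by Lemma~\ref{lemma:mathbbDC}, so $\mathbb{D}=\mathbb{D}_1$ and the entire analysis takes place in row~1. From (i) combined with $\delta_2\le n-3$ I would deduce $\delta_1\ge n$, so that both $(1:n-1)$ and $(1:2n-3-\delta_2)$ lie in $\mathbb{D}_1$; by Lemma~\ref{lemma:killed} the former is not in $S'_1\cup T'_1$ (using (iii) and $\gamma_2\le\gamma_1$), and by Corollary~\ref{cor:killed} the latter is not killed either.

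Next I would use (iv) to see that $S'_1=\{(1:c'):2n-3-\gamma_1\le c'\le 2n-4-\delta_2\}$ is a nonempty interval contained in $\mathbb{D}_1$ (the endpoints land in $[n-1,\delta_1]$ by (iii) and (i)), and that this interval lies strictly between the columns $n-1$ and $2n-3-\delta_2$: the left endpoint exceeds $n-1$ precisely because $\gamma_1<n-2$, and the right endpoint is less than $2n-3-\delta_2$ trivially. Thus the killed boxes of $S'_1$ form a contiguous wall separating $(1:n-1)$ from $(1:2n-3-\delta_2)$ inside $\mathbb{D}_1$.

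Finally, I would verify that the $T$-kills can only trim the right end of $\mathbb{D}_1$ and not introduce an additional gap: since $\gamma_2\le\delta_2$, every box of $T'_1$ lies strictly to the right of $(1:2n-3-\delta_2)$, so the alive boxes split into precisely the two intervals
\[
A_1=\{(1:c):n-1\le c\le 2n-4-\gamma_1\}\ \text{and}\ A_2=\{(1:c):2n-3-\delta_2\le c\le \min(2n-3-\gamma_2,\delta_1)\},
\]
both nonempty and separated by the $S'_1$-block. This exhibits $\mathbb{A}=A_1\sqcup A_2$ as two connected components, completing the argument.

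The main obstacle I expect is purely bookkeeping: checking that the boundary cases for $S'_1$, $T'_1$, and the endpoint columns $n-1$ and $2n-3-\delta_2$ all fall where claimed, using the type~D shift in the relation (the centre is $(2n-3)/2$ rather than $n-1$) and the slightly different shape of $\mathbb{D}_1,\mathbb{D}_2$ from Lemma~\ref{lemma:mathbbDC}. Once these inequalities are confirmed the argument is essentially mechanical and parallel to the proof of Corollary~\ref{cor:bisectionC}.
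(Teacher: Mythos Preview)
Your proposal is correct and follows essentially the same approach as the paper. The paper's own proof is a two-line appeal to the argument of Lemma~\ref{lemma:bisectionC} (adapted to type~D), observing that bisection forces $\mathbb{D}=\mathbb{D}_1$ and that $\mathbb{D}_1\setminus(S'_1\cup T'_1)$ then has exactly two components; you have simply unpacked those details explicitly, verifying the relevant inequalities for the endpoints of $S'_1$ and $T'_1$ under the type~D relation.
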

\begin{proof}
Similarly to the proof of Lemma~\ref{lemma:bisectionC}, if a connected component of $\mathbb{D}$ is bisected then $\mathbb{D}=\mathbb{D}_1$, so $\mathbb{D}_1$ is bisected. It also follows from the proof that $\mathbb{D}_1\setminus(S'_1\cup T'_1)=\mathbb{A}$ has two connected components.
\end{proof}

\begin{Lemma}\label{lemma:threshcptsurvives}
If $\gamma\rightarrow\delta$ with $\gamma\subset\delta$, $|\gamma|\le 2n-4$, $|\delta|>2n-4$, $\gamma_1\ge n-2$ and also $\mathbb{D}_1$ is nonempty, then not all $\mathbb{D}_1$-boxes are killed.
\end{Lemma}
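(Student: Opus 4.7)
The plan is to mimic the proof of the type~C analogue Lemma~\ref{lemma:threshcptsurvivesC}, with the column arithmetic shifted down by one to reflect the smaller base rectangle $2\times(2n-3)$. Lemma~\ref{lemma:mathbbDC} together with the hypothesis $\gamma_1\ge n-2$ yields $\mathbb{D}_1=\{(1:c):\gamma_1+1\le c\le \delta_1\}$, so the task is to produce a column $c\in[\gamma_1+1,\delta_1]$ with $(1:c)\notin S'_1\cup T'_1$. I will do this by checking three candidate columns: the rightmost $c=\delta_1$, the leftmost $c=\gamma_1+1$, and the middle $c=2n-3-\delta_2$, dispatching by case analysis on the intersections $S'_1\cap\mathbb{D}_1$ and $T'_1\cap\mathbb{D}_1$.

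The hypothesis $|\delta|>2n-4$ gives $\delta_1>2n-4-\delta_2$, placing $(1:\delta_1)$ outside the interval $S'_1=[2n-3-\gamma_1,\,2n-4-\delta_2]$ of Lemma~\ref{lemma:killed}. Similarly $|\gamma|\le 2n-4$ yields $\gamma_1+1<2n-2-\gamma_2$, placing $(1:\gamma_1+1)$ outside $T'_1=[2n-2-\gamma_2,\,2n-3]$. Hence if $S'_1\cap\mathbb{D}_1=\emptyset$ then $(1:\gamma_1+1)$ is killed by nothing, and if $T'_1\cap\mathbb{D}_1=\emptyset$ then $(1:\delta_1)$ is killed by nothing.

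In the remaining case, pick $c_0\in S'_1\cap\mathbb{D}_1$: then $\gamma_1+1\le c_0\le 2n-4-\delta_2$, whence $\gamma_1+\delta_2\le 2n-5$, and in particular $\gamma_1+1\le 2n-3-\delta_2$. Combined with $2n-3-\delta_2\le \delta_1$ (from $|\delta|>2n-4$), this places $(1:2n-3-\delta_2)$ inside $\mathbb{D}_1$, and Corollary~\ref{cor:killed} then tells us this box is not killed.

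The main subtlety is in the third case: the argument requires the stronger bound $\gamma_1+\delta_2\le 2n-5$ coming from $S'_1\cap\mathbb{D}_1\neq\emptyset$, not merely $\delta_2<\gamma_1$ as one gets from $S'_1$ being nonempty as a set. Without this, if $\gamma_1$ were very large the candidate $(1:2n-3-\delta_2)$ could lie strictly to the left of $\mathbb{D}_1$ and fail to belong to it at all.
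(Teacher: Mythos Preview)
Your proof is correct and is essentially the same as the paper's: the paper simply writes ``Similar to the proof of Lemma~\ref{lemma:threshcptsurvivesC}'', and that proof also checks the three candidate boxes $(1:\delta_1)$, $(1:\gamma_1+1)$, and (in type~D) $(1:2n-3-\delta_2)$, appealing to Corollary~\ref{cor:killed} for the last. Your case split on whether $S'_1\cap\mathbb{D}_1$ and $T'_1\cap\mathbb{D}_1$ are empty (rather than $S'_1$ and $T'_1$ themselves) is in fact a sharpening of the paper's phrasing and is exactly what is needed to guarantee $\gamma_1+1\le 2n-3-\delta_2$ in the third case, as you note in your final paragraph.
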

\begin{proof}
Similar to the proof of Lemma~\ref{lemma:threshcptsurvivesC}.
\end{proof}

As in the previous section, we will use the notation of \cite{Searles.Yong} for RYDs for $OG(2,2n)$ from now on. An RYD will be denoted $\olambda = \langle \lambda|\bullet\rangle$ or $\olambda = \langle \lambda|\circ\rangle$ where $\lambda=(\lambda_1,\lambda_2)$ is the partition in $2\times (2n-4)$ corresponding to the roots used in the base region, and $\bullet/\circ$ denotes whether $\olambda$ uses the single root in the top region or not. If neither $\lambda_1$ nor $\lambda_2$ is equal to $n-2$, then $\olambda$ is said to be {\bf neutral}, otherwise $\olambda$ is {\bf charged} and is assigned a ``charge'' denoted ${\rm ch}(\olambda)$, which is either $\uparrow$ or $\downarrow$ exactly as in the introduction. 
Let $\Pi(\olambda)$ denote $\langle \lambda_1,\lambda_2|{\bullet}/{\circ}\rangle$, i.e., ignoring any charge. For shapes $\olambda, \omu \in {\mathbb Y}_{OG(2,2n)}$, let $M=\min\{\lambda_1-\lambda_2, \mu_1-\mu_2\}$. We reprise the definition of the product $\star$ on RYDs from \cite{Searles.Yong}:

\begin{Definition}\cite[Definition 5.1]{Searles.Yong}
\label{def:fakeproduct}
For $\olambda, \omu \in {\mathbb Y}_{OG(2,2n)}$, define an expression $\Pi(\olambda)\diamond\Pi(\omu)$:
\begin{enumerate}
\item[(A)] If $|\langle\lambda| \circ\rangle|+|\langle\mu| \circ\rangle| \le 2n-4$, then
\[\Pi(\langle\lambda|\circ\rangle) \diamond \Pi(\langle\mu|\circ\rangle) = \sum_{0 \le k \le M}
\langle\lambda_1+\mu_1-k, \lambda_2+\mu_2+k| \circ\rangle\]
\item[(B)] If $|\langle\lambda|\circ\rangle| + |\langle\mu|\circ\rangle| > 2n-4$, then
\begin{multline}\nonumber
\Pi(\langle\lambda|\circ\rangle)\diamond \Pi(\langle\mu|\circ\rangle) = \langle\lambda_1+\mu_1, \lambda_2+\mu_2-1| \bullet\rangle +
2\sum_{1 \le k \le M}  \langle\lambda_1+\mu_1-k, \lambda_2+\mu_2+k-1| \bullet\rangle\\ + \langle\lambda_1+\mu_1-M-1, \lambda_2+\mu_2+M|\bullet\rangle
\end{multline}
\item[(C)] $\Pi(\langle\lambda|\bullet\rangle)\diamond \Pi(\langle\mu|\circ\rangle) = \Pi(\langle\lambda|\circ\rangle)\diamond \Pi(\langle\mu|\bullet\rangle) =
\sum_{0 \le k \le M}  \langle\lambda_1+\mu_1-k, \lambda_2+\mu_2+k| \bullet\rangle$
\item[(D)] $\Pi(\langle\lambda|\bullet\rangle) \diamond \Pi(\langle\mu|\bullet\rangle) = 0$.
\end{enumerate}

Declare any $\overline{\alpha}$ in the above expressions to be zero if
$(\alpha_1,\alpha_2)$ is not a partition in $2\times(2n-4)$. Such $\overline{\alpha}$ will be called {\bf illegal}.
\end{Definition}

If $\olambda, \omu$ are both charged, we say they {\bf match} if ${\rm ch}(\olambda)={\rm ch}(\omu)$, and are {\bf opposite} otherwise. The opposite charge to ${\rm ch}(\olambda)$ is denoted ${\rm op}(\olambda)$. Define:

$\eta_{\olambda,\omu}=
\begin{cases}
2 & \text{if $\olambda$, $\omu$ are charged and match and $n$ is even;}\\
2 & \text{if $\olambda$, $\omu$ are charged and opposite and $n$ is odd;}\\
1 & \text{if $\olambda$ or $\omu$ are not charged;}\\
0 & \text{otherwise}
\end{cases}$

If a $\okappa$ appearing in $\Pi(\olambda)\diamond\Pi(\omu)$ has $\kappa_1=n-2$ or $\kappa_2=n-2$, we say $\okappa$ is {\bf ambiguous}. We say $\olambda\in\mathbb{Y}_{OG(2,2n)}$ is {\bf Pieri} if $\Pi(\olambda)=\langle j,0|{\bullet}/{\circ}\rangle$, and {\bf non-Pieri} otherwise.

\begin{Definition}\label{def:starproduct}\cite[Definition 5.2]{Searles.Yong}
Let $\olambda,\omu\in {\mathbb Y}_{OG(2,2n)}$. Define a commutative product $\star$ on $R={\mathbb Z}[{\mathbb Y}_{OG(2,2n)}]$:

If $\Pi(\olambda)=\Pi(\omu)=\langle n-2,0|\circ\rangle$, then

$\olambda\star\omu=\begin{cases}
\sum_{0\le k\le \frac{n-2}{2}}\langle 2n-4-2k, 2k|\circ\rangle & \text{if $n$ is even and $\olambda$, $\omu$ match}\\                                       
\sum_{0\le k\le \frac{n-4}{2}}\langle 2n-5-2k, 2k+1|\circ\rangle & \text{if $n$ is even and $\olambda$, $\omu$ are opposite}\\
\sum_{0\le k\le \frac{n-3}{2}}\langle 2n-5-2k, 2k+1|\circ\rangle & \text{if $n$ is odd and $\olambda$, $\omu$ match} \\                                    
\sum_{0\le k\le \frac{n-3}{2}}\langle 2n-4-2k, 2k|\circ\rangle & \text{if $n$ is odd and $\olambda$, $\omu$ are opposite} 
\end{cases}$

\noindent
where for the first and third cases above, the shape $\langle n-2,n-2|\circ\rangle$ is assigned ${\rm ch}(\olambda)={\rm ch}(\omu)$.

Otherwise, compute $\Pi(\olambda)\diamond\Pi(\omu)$ and
\begin{itemize}
\item[(i)] First, replace any term $\okappa$ that has $\kappa_1=2n-4$ by
$\eta_{\olambda,\omu}\okappa$.
\item[(ii)] Next, replace each $\okappa$ by $2^{{\rm fsh}(\okappa)-{\rm fsh}(\olambda)-{\rm fsh}(\omu)}\okappa$.
\item[(iii)] Lastly, ``disambiguate'' using one in the following complete list of possibilities:
\begin{itemize}
\item[(iii.1)] \noindent{\sf (if $\olambda, \omu$ are both non-Pieri)} 
Replace any ambiguous $\okappa$ by $\frac{1}{2}(\okappa^{\uparrow}+\okappa^{\downarrow})$.

\item[(iii.2)] \noindent{\sf (if one of $\olambda, \omu$ is  neutral and Pieri)} 
Since $\Pi(\olambda)\diamond\Pi(\omu) = \Pi(\omu)\diamond\Pi(\olambda)$, we may assume $\olambda$ is Pieri. Then replace any ambiguous $\okappa$ by $\frac{1}{2}(\okappa^{\uparrow}+\okappa^{\downarrow})$ if $\omu$ is neutral, and by $\okappa^{{\rm ch}(\omu)}$ if $\omu$ is charged.

\item[(iii.3)] \noindent{\sf (if one of $\olambda, \omu$ is  charged and Pieri, and the other is non-Pieri)}. As above, we may assume $\olambda$ is Pieri. In particular, $\Pi(\olambda)=\langle n-2,0|\circ\rangle$. 
\begin{itemize}
\item[(iii.3a)] If $\omu = \langle \mu |\bullet\rangle$ is neutral and $|\mu|=2n-4$,
then replace the ambiguous term $\langle 2n-4, n-2|\bullet\rangle$ by $\langle 2n-4, n-2|\bullet\rangle^{{\rm ch}(\olambda)}$ if $\mu_1$ is even and by $\langle 2n-4, n-2|\bullet\rangle^{{\rm op}(\olambda)}$ if $\mu_1$ is odd.
\item[(iii.3b)] Otherwise, replace any ambiguous $\okappa$ by $\frac{1}{2}(\okappa^{\uparrow}+\okappa^{\downarrow})$ if $\omu$ is 
neutral, and by $\okappa^{{\rm ch}(\omu)}$ if $\omu$ is charged.
\end{itemize}
\end{itemize}
\end{itemize}
\end{Definition}

Define

$f(\Pi(\olambda))=\begin{cases}
(\lambda_1,\lambda_2)\in P(n-2,n) & \text{if $\olambda=\langle \lambda|\circ\rangle$}\\
(\lambda_1+1,\lambda_2)\in P(n-2,n) & \text{if $\olambda=\langle \lambda|\bullet\rangle$.}
\end{cases}$

Then the following specializes Proposition~\ref{prop:RYDtoBKTD} to the adjoint case $k=2$, where we write $F$ instead of $F_2$:

\begin{Proposition} The elements of ${\mathbb Y}_{OG(2,2n)}$ are in bijection with the elements of $\tilde{P}(n-2,n)$ via

$F(\olambda)=\begin{cases}
(f(\Pi(\olambda));0) & \text{if $\olambda$ is neutral} \\
(f(\Pi(\olambda));1) & \text{if $\olambda$ is assigned $\uparrow$} \\
(f(\Pi(\olambda));2) & \text{if $\olambda$ is assigned $\downarrow$} \\
\end{cases}$
\end{Proposition}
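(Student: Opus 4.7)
The plan is to deduce this proposition as a direct specialization of Proposition~\ref{prop:RYDtoBKTD} at $k=2$. Since Proposition~\ref{prop:RYDtoBKTD} has already established that $F_2:\mathbb{Y}_{OG(2,2n)}\rightarrow \tilde{P}(n-2,n)$ is a bijection, the only real task is to verify that the map $F$ defined in Section~5 using the compact notation $\langle \lambda|\bullet/\circ\rangle$ coincides with $F_2$ expressed in the pair-of-partitions notation $(\lambda^{(1)}|\lambda^{(2)})$ of Section~3. So the proof reduces to a notation-matching check.

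First I would unpack the two regions for $k=2$. The top region is the staircase $(k-1,k-2,\ldots,0)=(1,0)$, which consists of exactly one root. Consequently the strict partition $\lambda^{(2)}\subseteq (1,0)$ is either $(0,0)$ or $(1,0)$, and these two cases correspond respectively to $\olambda=\langle\lambda|\circ\rangle$ and $\olambda=\langle\lambda|\bullet\rangle$ in the Section~5 notation. The base region is a $k\times(2n-2k)=2\times(2n-4)$ rectangle, matching the ambient shape of $\lambda=(\lambda_1,\lambda_2)$. A direct computation in the two cases then gives
\[
(\lambda^{(1)}_i+\lambda^{(2)}_i)_{1\le i\le 2}=f(\Pi(\olambda)),
\]
which identifies the partition parts of $F_2(\olambda)$ with those of $F(\olambda)$.

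Next I would match the type/charge data. In Section~3, a charge $\uparrow$ (respectively $\downarrow$) is assigned to $\lambda$ exactly when $\lambda^{(1)}_i=n-k$ for some $i$, according to whether $\lambda$ uses the root above $\beta_{n-1}$ (respectively $\beta_n$) in the $i$th double-tailed diamond. Specialized to $k=2$, the condition $\lambda^{(1)}_i=n-k$ becomes $\lambda_1=n-2$ or $\lambda_2=n-2$, which is precisely the condition that defines a \emph{charged} RYD in Section~5; the $\uparrow/\downarrow$ rule is the same on both sides by definition. Thus the type assignments under $F_2$ and $F$ agree term-by-term. Combining this with the partition identification above, I conclude that $F(\olambda)=F_2(\olambda)$ for every $\olambda\in\mathbb{Y}_{OG(2,2n)}$, and bijectivity of $F$ follows from Proposition~\ref{prop:RYDtoBKTD}.

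There is no real obstacle in this argument; the entire proof is a dictionary lookup between the Section~5 shorthand and the general-$k$ conventions of Section~3. The only point requiring care is to keep straight that $\lambda^{(2)}$ records whether the single top-region root is used (absorbed into $\bullet/\circ$), while $\lambda^{(1)}$ carries the usual $(\lambda_1,\lambda_2)$ information that drives both the partition parts of the image and the charge/neutrality dichotomy.
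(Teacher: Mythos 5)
Your proposal is correct and is essentially the paper's own treatment: the paper offers no separate argument for this statement, presenting it precisely as the specialization of Proposition~\ref{prop:RYDtoBKTD} to $k=2$ together with the dictionary between the pair-of-partitions notation $(\lambda^{(1)}|\lambda^{(2)})$ and the shorthand $\langle\lambda|\bullet/\circ\rangle$, which is exactly the check you carry out. One small imprecision: in type D the base region is a union of two double-tailed diamonds rather than a $2\times(2n-4)$ rectangle, but since $\lambda^{(1)}=(\lambda_1,\lambda_2)$ is nonetheless a partition in $2\times(2n-4)$ and the charge rule is by definition the same in both sections, this does not affect your argument.
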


Let $\oalpha_p$ denote $\langle p,0|{\bullet}/{\circ}\rangle\in \mathbb{Y}_{OG(2,2n)}$. Throughout, given $\olambda\in\mathbb{Y}_{OG(2,2n)}$ let $\gamma$ denote $f(\Pi(\olambda))$ and $\tilde{\gamma}$ denote $F(\olambda)$. 

\begin{Lemma}\label{lemma:shapesagree}
Suppose $p\neq 2n-3$. Then a (legal) shape $\okappa$ appears in the expansion $\Pi(\oalpha_p)\diamond\Pi(\olambda)$ if and only if $\gamma\rightarrow f(\okappa)$. If also $p\neq n-2$, then a (legal) shape $\omu$ appears in the expansion $\oalpha_p\star\olambda$ if and only if $F(\omu)$ appears in the expansion $\sigma_p\cdot\sigma_{\tilde{\gamma}}$.
\end{Lemma}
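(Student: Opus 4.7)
The plan is to mirror the structure of Lemma~\ref{lemma:shapesagreeC} from the previous section, with $2n-4$ replacing $2n-3$ throughout and Lemmas~\ref{lemma:removebox}, \ref{lemma:thresh}, \ref{lemma:nonthresh} standing in for their type $C$ counterparts. For the first claim, I will do case analysis on which branch of Definition~\ref{def:fakeproduct} produces $\Pi(\oalpha_p)\diamond\Pi(\olambda)$. The three cases are: $p+|\olambda|\le 2n-4$ (branch (A)), $|\olambda|>2n-4$ (branch (C)), and the intermediate case $|\olambda|\le 2n-4<p+|\olambda|$ (branch (B)). The hypothesis $p\neq 2n-3$ excludes the degenerate shape $\oalpha_p=\langle 2n-4,0|\bullet\rangle$, for which the $\diamond$-product vanishes unless $\olambda$ is also of this special form.

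In cases (A) and (C) the legal shapes appearing in $\Pi(\oalpha_p)\diamond\Pi(\olambda)$ are precisely the horizontal-strip additions of $p$ boxes to $\gamma$; each such $\delta$ satisfies $\gamma\subseteq\delta$ and trivially meets conditions (1) and (2) (no box is removed), and by Lemma~\ref{lemma:nonthresh} these account for every $\delta$ with $\gamma\to\delta$. In case (B), branch (B) produces one extra term corresponding to the shape $\gamma^*=(\gamma_1+p+1,\gamma_2-1)$; by Lemma~\ref{lemma:thresh} this satisfies $\gamma\to\gamma^*$, and by Lemma~\ref{lemma:removebox} it is the unique $\delta$ with $\gamma\not\subseteq\delta$ arising from the $\to$ relation. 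This proves the first claim.

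For the second claim, the correspondence of underlying shapes follows immediately from the first claim together with the bijection $f$ and the definition of $F$. What remains is to check that the type assignment produced by step (iii) of Definition~\ref{def:starproduct} matches the BKT criterion ${\tt type}(\gamma)+{\tt type}(\delta)\neq 3$. Since $p\neq n-2$, the Pieri factor $\oalpha_p$ is neutral, so step (i) multiplies by $\eta_{\oalpha_p,\olambda}=1$ and removes nothing, and cases (iii.1) and (iii.3) of the disambiguation do not apply (they require a non-Pieri factor opposite a non-Pieri or charged-Pieri factor respectively). Hence step (iii.2) governs every ambiguous term.

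Under step (iii.2): if $\olambda$ is neutral, each ambiguous $\okappa$ is replaced by $\tfrac{1}{2}(\okappa^{\uparrow}+\okappa^{\downarrow})$, so both types $1$ and $2$ appear for $\tilde\delta$; if $\olambda$ is charged, each ambiguous $\okappa$ is replaced by $\okappa^{{\rm ch}(\olambda)}$, so only the type of $\tilde\gamma$ appears. On the BKT side, since ${\tt type}(\sigma_p)=0$, the criterion ${\tt type}(\gamma)+{\tt type}(\delta)\neq 3$ places no restriction when ${\tt type}(\tilde\gamma)=0$ (giving both types for ambiguous $\tilde\delta$), and restricts to ${\tt type}(\tilde\delta)\in\{0,{\tt type}(\tilde\gamma)\}$ when $\tilde\gamma$ is charged (giving only the matching type for ambiguous $\tilde\delta$). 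These prescriptions agree case-by-case, yielding the second claim. The main obstacle is this final bookkeeping between charges and types through $F$; once the enumeration of branches is in hand, each subcase is a direct check.
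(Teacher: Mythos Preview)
Your approach is the same as the paper's: reduce the first claim to the type $C$ argument of Lemma~\ref{lemma:shapesagreeC} (with $2n-4$ in place of $2n-3$ and Lemmas~\ref{lemma:removebox}--\ref{lemma:nonthresh} in place of their type $C$ analogues), and for the second claim observe that when $p\neq n-2$ step (i) is trivial, step (ii) never kills a term, and step (iii.2) governs all disambiguation, which under $F$ is exactly the condition ${\tt type}(\gamma)+{\tt type}(\delta)\neq 3$. The paper's proof says precisely this, in fewer words.

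There is one imprecision in your case (C) analysis that the paper's Lemma~\ref{lemma:shapesagreeC} handles explicitly and that you gloss over. You assert that in case (C) the legal shapes in $\Pi(\oalpha_p)\diamond\Pi(\olambda)$ are \emph{precisely} the horizontal-strip additions of $p$ boxes to $\gamma$, and that each such $\delta$ trivially meets (1) and (2). This is not quite right. In case (C) one has $\gamma=(\lambda_1+1,\lambda_2)$, and the $\diamond$-product runs over $0\le k\le M=\min(\lambda_1-\lambda_2,p)$, whereas the horizontal-strip additions to $\gamma$ run over $0\le k\le \min(\gamma_1-\gamma_2,p)=\min(\lambda_1-\lambda_2+1,p)$. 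When $p>\lambda_1+1-\lambda_2=\gamma_1-\gamma_2$, the extra horizontal-strip addition $\delta=(\gamma_2+p,\gamma_1)$ lies in $P(n-2,n)$ but is \emph{not} the $f$-image of any legal term. For the ``if and only if'' to hold you must check that this $\delta$ also fails $\gamma\to\delta$; indeed it violates condition (1), as the paper notes in the type $C$ proof. Without this check, your argument in case (C) only establishes one direction. Once you add that one line, the proof is complete and matches the paper's.
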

\begin{proof}
The first claim is proved similarly to the proof of Lemma~\ref{lemma:shapesagreeC}. Now suppose $p\neq n-2$. Then (i) has no effect on $\Pi(\oalpha_p)\diamond\Pi(\olambda)$, and (ii) multiplies every term by a nonzero coefficient. Then terms are disambiguated by (iii.2). Under $F$, (iii.2) translates exactly to the condition ${\tt type}(\gamma)+{\tt type}(\delta)\neq 3$. So the charge assignments in $\oalpha_p\star\olambda$ agree with the types appearing in $\sigma_p \cdot \sigma_{\tilde{\gamma}}$. This proves the second claim.
\end{proof}

The following lemma from \cite{Searles.Yong} will be used in the proof.

\begin{Lemma}
\label{lemma:firstrow}
If $\okappa=\langle\kappa_1,\kappa_2|{\bullet}{/\circ}\rangle$ appears in $\Pi(\olambda)\diamond\Pi(\omu)$ then
\[\kappa_1\geq
\begin{cases}
\max(\lambda_1+\mu_2,\lambda_2+\mu_1) & \mbox{if $\Pi(\olambda)\diamond\Pi(\omu)$ is described by (A) or (C)}\\
\max(\lambda_1+\mu_2,\lambda_2+\mu_1)-1 & \mbox{if $\Pi(\olambda)\diamond\Pi(\omu)$ is described by (B)}
\end{cases}\]
\end{Lemma}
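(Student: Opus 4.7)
The plan is to prove Lemma~\ref{lemma:firstrow} by a direct case analysis following the four clauses (A)--(D) of Definition~\ref{def:fakeproduct}, since the right-hand side in each clause is given as an explicit sum of shapes $\okappa$ indexed by an integer $k$, so one simply identifies the minimum value of $\kappa_1$ and compares it to the claimed lower bound. The key observation driving every case is that $M=\min\{\lambda_1-\lambda_2,\mu_1-\mu_2\}$ satisfies $\lambda_1+\mu_1-M = \max(\lambda_1+\mu_2,\lambda_2+\mu_1)$; indeed, if $M=\lambda_1-\lambda_2$ then $\lambda_1+\mu_1-M=\lambda_2+\mu_1$, and the inequality $\lambda_1-\lambda_2\le \mu_1-\mu_2$ forces $\lambda_1+\mu_2\le \lambda_2+\mu_1$, so the maximum is indeed $\lambda_2+\mu_1$; the other subcase is symmetric.

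For case (A), every term in $\Pi(\langle\lambda|\circ\rangle)\diamond\Pi(\langle\mu|\circ\rangle)$ has first coordinate $\kappa_1=\lambda_1+\mu_1-k$ for some $0\le k\le M$, so $\kappa_1\ge \lambda_1+\mu_1-M = \max(\lambda_1+\mu_2,\lambda_2+\mu_1)$ by the observation above. Case (C) is identical, since the range of $k$ and the formula for $\kappa_1$ are the same (only the top-region decoration changes). For case (B), the first coordinates that appear are $\lambda_1+\mu_1$, the values $\lambda_1+\mu_1-k$ for $1\le k\le M$, and finally $\lambda_1+\mu_1-M-1$; the smallest of these is $\lambda_1+\mu_1-M-1$, which by the observation equals $\max(\lambda_1+\mu_2,\lambda_2+\mu_1)-1$, matching the bound stated for (B). Case (D) is vacuous because the product is zero.

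There is no real obstacle: once the identity $\lambda_1+\mu_1-M=\max(\lambda_1+\mu_2,\lambda_2+\mu_1)$ is in hand, every remaining step is arithmetic. The one small subtlety is to remember that in case (B) the final term $\langle \lambda_1+\mu_1-M-1,\lambda_2+\mu_2+M|\bullet\rangle$ genuinely contributes the minimal $\kappa_1$ and accounts for the $-1$ correction on the right-hand side of the lemma; all other terms in (B) have $\kappa_1 \ge \lambda_1+\mu_1-M$, which already exceeds the stated bound. Illegal $\okappa$ (those with $(\kappa_1,\kappa_2)$ not a partition in $2\times(2n-4)$) are declared zero, so they do not affect the statement.
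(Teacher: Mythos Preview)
Your proof is correct. The paper does not actually supply its own proof of this lemma; it simply imports the statement from \cite{Searles.Yong}, so there is nothing to compare against beyond noting that your direct verification---reading off the minimal $\kappa_1$ from each clause of Definition~\ref{def:fakeproduct} and using the identity $\lambda_1+\mu_1-M=\max(\lambda_1+\mu_2,\lambda_2+\mu_1)$---is exactly the intended elementary argument.
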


\subsection{Agreement of Definition~\ref{def:starproduct} with Theorem \ref{TheoremBKTPieri} when $p>n-2$.} 
Suppose $p=2n-3$. Then $\oalpha_p=\langle 2n-4,0|\bullet\rangle$ and by Lemma~\ref{lemma:firstrow} or by (D) $\oalpha_p\star\olambda=0$ unless $\olambda=\langle\lambda|\circ\rangle$ and $\lambda_2=0$, in which case $\oalpha_p\star\olambda=\langle 2n-4,\lambda_1|\bullet\rangle$ (assigned ${\rm ch}(\olambda)$ if $\lambda_1=n-2$). Clearly the only $\delta$ with $\gamma\rightarrow\delta$ is $\delta=(2n-3,\lambda_1)=f(\langle 2n-4,\lambda_1|\bullet\rangle)$. We have $N'(\gamma,\delta)=0$ since $\mathbb{D}=\mathbb{D}_1\cup\mathbb{D}_2$ is connected. Finally, if $\lambda_1=n-2$ then only $(\delta;{\tt type}(\gamma))$ appears in $\sigma_{2n-3}\cdot\sigma_{\tilde{\gamma}}$, since ${\tt type}(\gamma)+{\tt type}(\delta)\neq 3$.  

Thus assume $p<2n-3$. By Lemma \ref{lemma:shapesagree} and since $\epsilon_{\gamma,\delta}=1$, it suffices to show that for any (legal) $c\cdot\omu$ appearing in $\oalpha_p\star\olambda$, $c=2^{N'(\gamma,\delta)}$, where $\tilde{\delta}=F(\omu)$. As in the previous section, we may assume terms whose coefficients we examine below are legal.

\noindent {\bf Case 1:} ($p+|\olambda|\le 2n-4$): Then $\oalpha_p\star\olambda = \sum_{0\le j \le \lambda_1-\lambda_2}\langle \lambda_1+p-j,\lambda_2+j|\circ\rangle$ (neutral). 
For the image $\tilde{\delta}$ of any term, since $|\delta|\le 2n-4$ we have $\mathbb{D}_2=\emptyset$ and so $\mathbb{D}=\mathbb{D}_1$. By Lemma~\ref{lemma:bisection} $\mathbb{D}_1$ is not bisected, so $N'(\gamma,\delta)=0$.

\noindent {\bf Case 2:} ($|\olambda|>2n-4$): We may assume $\lambda_2<n-2$, since otherwise $\Pi(\oalpha_p)\diamond\Pi(\olambda)=0$ by Lemma~\ref{lemma:firstrow}. Then $\oalpha_p\star\olambda = \sum_{0\le j \le \lambda_1-\lambda_2}\langle \lambda_1+p-j,\lambda_2+j|\bullet\rangle$ (neutral). For the image $\tilde{\delta}$ of any term, since $\gamma_1>n-2$ and $2n-2-\gamma_2 \le \gamma_1+1$ we have $\mathbb{D}_1\setminus T'_1=\emptyset$. By Lemma~\ref{lemma:bisection} there is no bisection, thus $N'(\gamma,\delta)= 0$. 

\noindent {\bf Case 3:} ($|\olambda|\le 2n-4$, $p+|\olambda|>2n-4$): We need three subcases.

\noindent {\bf Subcase 3a:} ($\lambda_1<n-2$): We compute 
\[\oalpha_p\star\olambda = \langle \lambda_1+p,\lambda_2-1|\bullet\rangle + 2\sum_{1\le j \le \lambda_1-\lambda_2} \langle\lambda_1+p-j,\lambda_2-1+j|\bullet\rangle + \langle\lambda_2+p-1,\lambda_1|\bullet\rangle \text{\ (neutral)}.\]  
If $\tilde{\delta}=F(\langle \lambda_1+p,\lambda_2-1|\bullet\rangle)=\tilde{\gamma}^*$ then $N'(\gamma,\delta)= 0$ by Lemmas~\ref{lemma:thresh} and \ref{lemma:specialPiericase}. For the image $\tilde{\delta}$ of a term in the summation, since $\delta_2<\gamma_1$ a component of $\mathbb{D}$ is bisected by Lemma~\ref{lemma:bisection}. Thus $N'(\gamma,\delta)=1$ by Corollary~\ref{cor:bisection}.
If $\tilde{\delta}=F(\langle\lambda_2+p-1,\lambda_1|\bullet\rangle)$, then $\delta_2=\gamma_1<n-2$ so $\mathbb{D}=\mathbb{D}_1$. Then $N'(\gamma,\delta)=0$ by Lemma~\ref{lemma:bisection}. 

\noindent {\bf Subcase 3b:} ($\lambda_1>n-2$): Let $M=\min\{\lambda_1-\lambda_2,p\}$. We compute \[\Pi(\oalpha_p)\diamond\Pi(\olambda) = \langle \lambda_1+p,\lambda_2-1|\bullet\rangle + 2\sum_{1\le j \le M} \langle\lambda_1+p-j,\lambda_2-1+j|\bullet\rangle + \langle\lambda_1+p-M-1,\lambda_2+M|\bullet\rangle.\] 
The first term is illegal. Next, (ii) multiplies any term $\okappa$ by $\frac{1}{2}$ if $\kappa_2<n-2$, and by $1$ otherwise. If a $\okappa$ is ambiguous, by (iii.2) it splits. 

Thus for the image $\delta$ of a term in the summation, we must $N'(\gamma,\delta)= 0$ if $\delta_2 \le n-2$ and $N'(\gamma,\delta)= 1$ if $\delta_2>n-2$. Assume $\delta_2\le n-2$. Then $\mathbb{D}=\mathbb{D}_1$, and $N'(\gamma,\delta)= 0$ follows from Lemma~\ref{lemma:bisection}. 
Now assume $\delta_2>n-2$. Then $\mathbb{D}=\mathbb{D}_1\cup\mathbb{D}_2$, where $\mathbb{D}_1,\mathbb{D}_2\neq\emptyset$ and $\mathbb{D}_1$ is not connected to $\mathbb{D}_2$. Then $N'(\gamma,\delta)= 1$ by Lemma~\ref{lemma:bisection}, Lemma~\ref{lemma:threshcptsurvives} and the fact that (since $\gamma_2<n-2$), $(2:n-1)\in \mathbb{D}_2\setminus T'_2$. If $\delta=f(\langle\lambda_1+p-M-1,\lambda_2+M|\bullet\rangle)$, we have $\mathbb{D}=\mathbb{D}_1\cup\mathbb{D}_2$ is connected. Then $N'(\gamma,\delta)= 0$ by Lemma~\ref{lemma:bisection}.

\noindent {\bf Subcase 3c:} ($\lambda_1=n-2$): We compute 
\[\oalpha_p\star\olambda = \sum_{1\le j \le n-2-\lambda_2} \langle n-2+p-j,\lambda_2-1+j|\bullet\rangle + \langle\lambda_2+p-1,n-2|\bullet\rangle^{{\rm ch}(\olambda)}.\] 
For the image $\tilde{\delta}$ of each term, since $\delta_2\le n-2$ we have $\mathbb{D}=\mathbb{D}_1$. Then $N'(\gamma,\delta)= 0$ by Lemma~\ref{lemma:bisection}.

\subsection{Agreement of Definition~\ref{def:starproduct} with Theorem \ref{TheoremBKTPieri} when $p<n-2$.} 

By Lemma \ref{lemma:shapesagree} and since $\epsilon_{\gamma,\delta}=1$, it suffices to show that for any $c\cdot\omu$ appearing in $\oalpha_p\star\olambda$, $c=2^{N'(\gamma,\delta)}$, where $\tilde{\delta}=F(\omu)$.

\noindent {\bf Case 1:} ($p+|\olambda|\le 2n-4$): There are two subcases.

\noindent {\bf Subcase 1a:} ($\lambda_1\ge n-2$): We compute $\oalpha_p\star\olambda = \sum_{0\le j\le p}\langle \lambda_1+p-j,\lambda_2+j|\circ\rangle$, where any term with first entry $n-2$ is assigned ${\rm ch}(\olambda)$. For the image $\tilde{\delta}$ of any term, since $|\delta|\le 2n-4$ we have $\delta_2\le n-2$ and $\mathbb{D}=\mathbb{D}_1$. Since $2n-3-\gamma_1\le \gamma_1+1$ and $2n-4-\delta_2\ge \delta_1$, we have $\mathbb{D}_1\setminus S'_1=\emptyset$, so $N'(\gamma,\delta)=0$.

\noindent {\bf Subcase 1b:} ($\lambda_1 < n-2$): Let $M=\min\{\lambda_1-\lambda_2,p\}$. We compute $\Pi(\oalpha_p)\diamond\Pi(\olambda) = \sum_{0\le j\le M}\langle \lambda_1+p-j,\lambda_2+j|\circ\rangle$. Now, (i) has no effect, and (ii) multiplies a term $\okappa$ by $1$ if $\kappa_1< n-2$, and by $2$ otherwise. If a $\okappa$ is ambiguous, it splits by (iii.2). Thus if $\delta=f(\okappa)$, we must show $N'(\gamma,\delta)=0$ if $\delta_1\le n-2$ and $N'(\gamma,\delta)=1$ if $\delta_1 > n-2$. 
If $\delta_1\le n-2$ then $\mathbb{D}=\emptyset$ so $N'(\gamma,\delta)=0$. Suppose $\delta_1 > n-2$. 
Then since $\delta_2<n-2$, we have $\mathbb{D}=\mathbb{D}_1$. Since $\delta_1>n-2$ and $\gamma_1<n-2$, we have $(1:n-1)\in\mathbb{D}_1$ and is not killed. Then $N'(\gamma,\delta)=1$ follows from Lemma~\ref{lemma:bisection}.

\noindent {\bf Case 2:} ($|\olambda|> 2n-4$): Let $M=\min\{\lambda_1-\lambda_2,p\}$. There are two subcases. 

\noindent {\bf Subcase 2a:} ($\lambda_2\ge n-2$): Here $\oalpha_p\star\olambda=\sum_{0\le j\le M}\langle \lambda_1+p-j,\lambda_2+j|\bullet\rangle$, where any charged term has charge ${\rm ch}(\olambda)$. For the image $\tilde{\delta}$ of any term, since $\gamma_2 \ge n-2$ all boxes of $R$ except $(1:n-1)$ are mentioned in (1). Since $(1:n-1)$ is not a $\mathbb{D}$-box, we have $N'(\gamma,\delta)=0$.

\noindent {\bf Subcase 2b:} ($\lambda_2 < n-2$): Here, $\Pi(\oalpha_p)\diamond\Pi(\olambda) = \sum_{0\le j\le M}\langle \lambda_1+p-j,\lambda_2+j|\bullet\rangle$. Then (ii) multiplies a term $\okappa$ by $1$ if $\kappa_2< n-2$, and by $2$ otherwise. If a $\okappa$ is ambiguous, by (iii.2) it splits. Therefore, if $\delta=f(\okappa)$, we must show that $N'(\gamma,\delta)=0$ if $\delta_2\le n-2$ and $N'(\gamma,\delta)=1$ if $\delta_2 > n-2$. For any $\delta$, since $2n-2-\gamma_2\le \gamma_1+1$ we have $\mathbb{D}_1\setminus T'_1=\emptyset$. Thus if $\delta_2\le n-2$, then $\mathbb{D}_2=\emptyset$, so $\mathbb{A}=\emptyset$ and $N'(\gamma,\delta)=0$. If $\delta_2>n-2$ then $(2:n-1)\in \mathbb{D}_2$ is not killed. Then $N'(\gamma,\delta)=1$ by Lemma~\ref{lemma:bisection}.

\noindent {\bf Case 3:} ($|\olambda|\le 2n-4$, $p+|\olambda|>2n-4$): Let $M=\min\{\lambda_1-\lambda_2,p\}$. There are three subcases.

\noindent {\bf Subcase 3a:} ($\lambda_1<n-2$): We compute 
\[\oalpha_p\star\olambda = 2\langle \lambda_1+p,\lambda_2-1|\bullet\rangle + 4\sum_{1\le j\le \lambda_1-\lambda_2}\langle \lambda_1+p-j,\lambda_2-1+j|\bullet\rangle + 2\langle \lambda_2+p-1,\lambda_1|\bullet\rangle \text{\ (neutral).}\] 
If $\tilde{\delta}=F(\langle \lambda_1+p,\lambda_2-1|\bullet\rangle)=\tilde{\gamma}^*$ then $N'(\gamma,\delta)=1$ by Lemmas~\ref{lemma:thresh} and \ref{lemma:specialPiericase}. For the image $\tilde{\delta}$ of a term in the summation, since $\delta_2<\gamma_1$ a component of $\mathbb{D}$ is bisected by Lemma~\ref{lemma:bisection}. Thus $N'(\gamma,\delta)=2$ by Corollary~\ref{cor:bisection}.
If $\tilde{\delta}=F(\langle \lambda_2+p-1,\lambda_1|\bullet\rangle)$ then $\delta_2=\gamma_1<n-2$ and $\delta_1>n-2$, so $\mathbb{D}=\mathbb{D}_1$ and $(1:n-1)\in\mathbb{D}_1$ is not killed. Then $N'(\gamma,\delta)=1$ by Lemma~\ref{lemma:bisection}. 

\noindent {\bf Subcase 3b:} ($\lambda_1 > n-2$): We compute 
\[\Pi(\oalpha_p)\diamond\Pi(\olambda) = \langle \lambda_1+p,\lambda_2-1|\bullet\rangle + 2\sum_{1\le j\le M}\langle \lambda_1+p-j,\lambda_2-1+j|\bullet\rangle + \langle \lambda_1+p-M-1,\lambda_2+M|\bullet\rangle.\] 
Then (ii) multiplies each term $\okappa$ of $\Pi(\oalpha_p)\diamond\Pi(\olambda)$ by $1$ if $\kappa_2<n-2$ and by $2$ otherwise, after which (iii.2) splits any ambiguous $\okappa$. If $\delta=f(\langle \lambda_1+p,\lambda_2-1|\bullet\rangle)=\gamma^*$ then $N'(\gamma,\delta)=0$ by Lemmas~\ref{lemma:thresh} and \ref{lemma:specialPiericase}. If $\delta=f(\langle \lambda_1+p-M-1,\lambda_2+M|\bullet\rangle)$ then either $\delta_2=\gamma_1$ or $\mathbb{D}_1=\emptyset$. If $\delta_2=\gamma_1$, then $N'(\gamma,\delta)=1$ follows by Lemma~\ref{lemma:bisection} and the fact that $(2:n-1)\in \mathbb{D}_2$ is not killed. If $\mathbb{D}_1=\emptyset$, then if $\delta_2\le n-2$ we have $\mathbb{D}_2=\emptyset$ and so $N'(\gamma,\delta)=0$, while if $\delta_2>n-2$ then by Lemma~\ref{lemma:bisection} and the fact that $(2:n-1)\in \mathbb{D}_2$ is not killed, we have $N'(\gamma,\delta)=1$.

For the image $\delta$ of a term in the summation, we must show $N'(\gamma,\delta)=1$ if $\delta_2 \le n-2$ and $N'(\gamma,\delta)=2$ if $\delta_2 > n-2$. If $\delta_2\le n-2$ then $\mathbb{D}=\mathbb{D}_1\neq\emptyset$, whence $N'(\gamma,\delta)=1$ by Lemma~\ref{lemma:bisection} and Lemma~\ref{lemma:threshcptsurvives}. If $\delta_2>n-2$, then since $\delta_2<\gamma_1$ we have $\mathbb{D}=\mathbb{D}_1\cup\mathbb{D}_2$, where $\mathbb{D}_1, \mathbb{D}_2\neq\emptyset$ and $\mathbb{D}_1$ is not connected to $\mathbb{D}_2$. Then $N'(\gamma,\delta)=2$ follows by Lemma~\ref{lemma:bisection}, Lemma~\ref{lemma:threshcptsurvives} and the fact that (since $\gamma_2<n-2$), $(2:n-1)\in \mathbb{D}_2\setminus T'_2$.

\noindent {\bf Subcase 3c:} ($\lambda_1=n-2$): We compute 
\[\oalpha_p\star\olambda = \langle n-2+p,\lambda_2-1|\bullet\rangle + 2\sum_{1\le j\le n-2-\lambda_2}\langle n-2+p-j,\lambda_2-1+j|\bullet\rangle + 2\langle \lambda_2+p-1,n-2|\bullet\rangle^{{\rm ch}(\olambda)}.\] 
If $\tilde{\delta}=F(\langle n-2+p,\lambda_2-1|\bullet\rangle)$ then $N'(\gamma,\delta)=0$ by Lemmas~\ref{lemma:thresh} and \ref{lemma:specialPiericase}. The image $\tilde{\delta}$ of any other term has $\delta_2\le n-2$ and $\delta_1>n-2$, so $\mathbb{D}=\mathbb{D}_1\neq\emptyset$. Then $N'(\gamma,\delta)=1$ by Lemma~\ref{lemma:bisection} and Lemma~\ref{lemma:threshcptsurvives}.

\subsection{Agreement of Definition~\ref{def:starproduct} with Theorem \ref{TheoremBKTPieri} when $p=n-2$.} 

It suffices to prove this for $\sigma_{n-2}=F(\langle n-2,0|\circ\rangle^\uparrow)$, since the proof for $\sigma'_{n-2}=F(\langle n-2,0|\circ\rangle^\downarrow)$ is essentially identical. 

\noindent {\bf Case 1:} ($\Pi(\olambda)=\langle n-2,0|\circ\rangle$): We compute $\sigma_{n-2}\cdot \sigma_{\tilde{\gamma}}$. Straightforwardly, $\gamma \rightarrow \delta$ if and only if $\delta \in \{(2n-4-j,j) : 0\le j \le n-2 \}$. Then the $\tilde{\delta}$ that can appear in $\sigma_{n-2}\cdot \sigma_{\tilde{\gamma}}$ are $(\delta;0)$ for all $\delta$ with with $\delta_2<n-2$, and $((n-2,n-2);{\tt type}(\gamma))$ (since ${\tt type}(\gamma)+{\tt type}(\delta)\neq 3$). For all such $\tilde{\delta}$ every $\mathbb{D}$-box is killed, so $N'(\gamma,\delta)=0$. We have $g(\gamma,\delta)=n-2-\delta_2$, so $h(\gamma,\delta)=n-2-\delta_2+{\tt type}(\gamma)$. Thus if $n$ is even and ${\rm type}(\gamma)=1$ or if $n$ is odd and ${\rm type}(\gamma)=2$, we have $\epsilon_{\tilde{\gamma},\tilde{\delta}}=1$ for all $\tilde{\delta}$ with $\delta_2$ even and $\epsilon_{\tilde{\gamma},\tilde{\delta}}=0$ for all $\tilde{\delta}$ with $\delta_2$ odd. Likewise, if $n$ is even and ${\rm type}(\gamma)=2$ or if $n$ is odd and ${\rm type}(\gamma)=1$, we have $\epsilon_{\tilde{\gamma},\tilde{\delta}}=1$ for all $\tilde{\delta}$ with $\delta_2$ odd and $\epsilon_{\tilde{\gamma},\tilde{\delta}}=0$ for all $\tilde{\delta}$ with $\delta_2$ even. This agrees with the definition (Definition~\ref{def:starproduct}) of $\langle n-2,0|\circ\rangle^\uparrow\star\langle n-2,0|\circ\rangle^{{\rm ch}(\olambda)}$.

In the remaining cases, we use Lemma~\ref{lemma:shapesagree}. We may assume $\lambda_2\neq 0$, since otherwise agreement follows by the previous case or previous subsections.

\noindent {\bf Case 2:} ($n-2+|\olambda|\le 2n-4$ and $\Pi(\olambda)\neq\langle n-2,0|\circ\rangle$): We compute $\langle n-2,0|\circ\rangle^\uparrow\star\olambda = \sum_{0\le j \le \lambda_1-\lambda_2}\langle n-2+\lambda_1-j, \lambda_2+j|\bullet\rangle$ (neutral, since we assume $\lambda_2\neq 0$). Then the images $\tilde{\delta}=(\delta;0)$ of the terms under $F$ are exactly the classes appearing in $\sigma_{n-2}\cdot\sigma_{\tilde{\gamma}}$. For any such $\tilde{\delta}$ we have $\gamma_1<n-2$ and $\delta_1>n-2$, so $\mathbb{D}=\mathbb{D}_1\neq \emptyset$ and $(1:n-1)\in \mathbb{D}_1$ is not killed. Then by Lemma~\ref{lemma:bisection} we have $N'(\gamma,\delta)=1$, so $\epsilon_{\tilde{\gamma},\tilde{\delta}}=\frac{1}{2}$ and $\tilde{\delta}$ has coefficient $1$.

\noindent {\bf Case 3:} ($|\olambda|> 2n-4$): If $\lambda_2>n-2$, then $\langle n-2,0|\circ\rangle\diamond\Pi(\olambda)=0$ by Lemma~\ref{lemma:firstrow}. Suppose $\lambda_2=n-2$. Then $\langle n-2,0|\circ\rangle^\uparrow\star\olambda=\frac{1}{2}\eta_{\olambda,\omu}\langle 2n-4,\lambda_1|\bullet\rangle$, assigned ${\rm ch}(\olambda)$ if $\lambda_1=n-2$. Let $\delta=f(\langle 2n-4,\lambda_1|\bullet\rangle) = (2n-3,\gamma_1-1)$. 
Since $\gamma_2=n-2$, $T'_2= R\setminus (1:n-1)$. Then $N'(\gamma,\delta)=0$ since $(1:n-1)$ is not a $\mathbb{D}$-box. Now, $g(\gamma,\delta)=n-2$ so $h(\tilde{\gamma},\tilde{\delta}) = n-2+{\tt type}(\gamma)$. Thus if $n$ is even, $\epsilon_{\gamma,\delta}=1$ if ${\tt type}(\gamma)=1$ and ${\tt type}(\delta)\in\{0,1\}$, and $\epsilon_{\gamma,\delta}=0$ otherwise. If $n$ is odd, $\epsilon_{\gamma,\delta}=1$ if ${\tt type}(\gamma)=2$ and ${\tt type}(\delta)\in\{0,2\}$, and $\epsilon_{\gamma,\delta}=0$ otherwise. This agrees with the coefficient $\frac{1}{2}\eta_{\olambda,\omu}$ of $\langle 2n-4,\lambda_1|\bullet\rangle$, and with the charge ${\rm ch}(\olambda)$ assigned if $\lambda_1=n-2$.

Now suppose $\lambda_2<n-2$. Then $\langle n-2,0|\circ\rangle\diamond\Pi(\olambda) = \sum_{0\le j\le M}\langle n-2+\lambda_1-j,\lambda_2+j|\bullet\rangle$. Here (i) has no effect, and since $n-2+|\lambda|\ge 3n-6$, every (legal) term $\okappa$ has $\kappa_2\ge n-2$, thus (ii) multiplies every term by $1$. There is an ambiguous term, namely $\langle 2n-4,n-2|\bullet\rangle$, if and only if $|\olambda|=2n-3$. Should it exist, it is disambiguated by (iii.3a). 
For the image $\delta$ of any term of $\langle n-2,0|\circ\rangle\diamond\Pi(\olambda)$, since $2n-2-\gamma_2\le \gamma_1+1$ we have $\mathbb{D}_1\setminus T'_1 = \emptyset$. Then if $\delta_2>n-2$, since $\gamma_2<n-2$ we have $(2:n-1)\in \mathbb{D}_2$ is not killed. So by Lemma~\ref{lemma:bisection}, $N'(\gamma,\delta)=1$. If $\delta=(2n-3,n-2)=f(\langle 2n-4,n-2|\bullet\rangle)$ then $\mathbb{D}_2=\emptyset$, so $N'(\gamma,\delta)=0$. Here $g(\gamma,\delta)=\gamma_2$, so $h(\tilde{\gamma},\tilde{\delta})=\gamma_2+{\tt type}(\delta)$. Thus $\epsilon_{\tilde{\gamma},\tilde{\delta}}=1$ if $\gamma_2$ is even and ${\tt type}(\delta)=1$ or if $\gamma_2$ is odd and ${\tt type}(\delta)=2$, while $\epsilon_{\gamma,\delta}=0$ otherwise. This agrees with the disambiguation (iii.3a) of $\langle 2n-4,n-2|\bullet\rangle$.

\noindent {\bf Case 4:} ($|\olambda|\le 2n-4$, $n-2+|\olambda|>2n-4$): 
There are three subcases.

\noindent {\bf Subcase 4a:} ($\lambda_1<n-2$): We compute $\langle n-2,0|\circ\rangle^\uparrow\star\olambda = $
\[\langle n-2+\lambda_1,\lambda_2-1|\bullet\rangle + 2\sum_{1\le j\le \lambda_1-\lambda_2} \langle n-2+\lambda_1-j,\lambda_2-1+j|\bullet\rangle + \langle n-2+\lambda_2-1,\lambda_1|\bullet\rangle \text{\ (neutral).}\] 
Then the images $\tilde{\delta}=(\delta;0)$ of the terms under $F$ are exactly the classes appearing in $\sigma_{n-2}\cdot\sigma_{\tilde{\gamma}}$. If $\tilde{\delta}=F(\langle n-2+\lambda_1,\lambda_2-1|\bullet\rangle)=\tilde{\gamma}^*$ then $N'(\gamma,\delta)=1$ by Lemmas~\ref{lemma:thresh} and \ref{lemma:specialPiericase}, hence $\epsilon_{\tilde{\gamma},\tilde{\delta}}=\frac{1}{2}$ and $\tilde{\delta}$ has coefficient $1$. For the image $\tilde{\delta}$ of a term in the summation, since $\delta_2<\gamma_1$ a component of $\mathbb{D}$ is bisected by Lemma~\ref{lemma:bisection}, thus $N'(\gamma,\delta)=2$ by Corollary~\ref{cor:bisection}. If $\tilde{\delta}=F(\langle n-2+\lambda_2-1,\lambda_1|\bullet\rangle)$ then $\delta_2=\gamma_1<n-2$, and since also $\delta_1>n-2$, we have $\mathbb{D}=\mathbb{D}_1$ and $(1:n-1)\in\mathbb{D}_1$ is not killed. Then $N'(\gamma,\delta)=1$ by Lemma~\ref{lemma:bisection}. 

\noindent {\bf Subcase 4b:} ($\lambda_1>n-2$): Let $M = \min\{\lambda_1-\lambda_2,n-2\}$. We compute $\langle n-2,0|\circ\rangle\diamond\Pi(\olambda) =$
\[\langle n-2+\lambda_1,\lambda_2-1|\bullet\rangle + 2\sum_{1\le j\le M}\langle n-2+\lambda_1-j,\lambda_2-1+j|\bullet\rangle + \langle n-2+\lambda_1-M-1,\lambda_2+M|\bullet\rangle.\] The first term is illegal.  
Now, (i) has no effect. Next, since $\lambda_2+M>n-2$, (ii) multiplies the last term by $1$, while for a term $\okappa$ of the summation, (ii) multiplies $\okappa$ by $\frac{1}{2}$ if $\kappa_2<n-2$ and by $1$ otherwise. Then (iii.3b) splits the ambiguous term of the summation.
For the image $\delta$ of any term $\okappa$, if $\delta_2=n-2$ we have both $(\delta;1)$ and $(\delta;2)$ appearing in $\sigma_{n-2}\cdot\sigma_{\tilde{\gamma}}$. This agrees with the splitting. Thus it remains to show that $N'(\gamma,\delta)=1$ for $\delta=f(\langle n-2+\lambda_1-M-1,\lambda_2+M|\bullet\rangle)$, while for all other $\delta$ we have $N'(\gamma,\delta)=1$ if $\delta_2\le n-2$ and $N'(\gamma,\delta)=2$ if $\delta_2>n-2$.

Consider the image $\delta$ of a term in the summation. If $\delta_2\le n-2$ then $\mathbb{D}=\mathbb{D}_1\neq \emptyset$, whence $N'(\gamma,\delta)=1$ by Lemma~\ref{lemma:bisection} and Lemma~\ref{lemma:threshcptsurvives}. If $\delta_2>n-2$, then since for any such $\delta$ we have $\delta_2<\gamma_1$, $\mathbb{D}=\mathbb{D}_1\cup\mathbb{D}_2$, where $\mathbb{D}_1,\mathbb{D}_2\neq\emptyset$ and $\mathbb{D}_1$ is not connected to $\mathbb{D}_2$. Then $N'(\gamma,\delta)=2$ follows from Lemma~\ref{lemma:bisection}, Lemma~\ref{lemma:threshcptsurvives} and the fact that (since $\gamma_2<n-2$) we have $(2:n-1)\in \mathbb{D}_2\setminus T'_1$. If $\delta=f(\langle n-2+\lambda_1-M-1,\lambda_2+M|\bullet\rangle)$ then either $\delta_2=\gamma_1$ or $\mathbb{D}_1=\emptyset$. In either case, $\mathbb{D}$ is a single connected component and $(2:n-1)\in\mathbb{D}_2$ is not killed. Then $N'(\gamma,\delta)=1$ follows from Lemma~\ref{lemma:bisection}.

\noindent {\bf Subcase 4c:} ($\lambda_1=n-2$): We compute $\langle n-2,0|\circ\rangle^\uparrow\star\olambda =$
\[\frac{1}{2}\eta_{\langle n-2,0|\circ\rangle^\uparrow,\olambda}\langle 2n-4,\lambda_2-1|\bullet\rangle + \sum_{1\le j\le n-2-\lambda_2}\langle 2n-4-j,\lambda_2-1+j|\bullet\rangle + \langle n-2+\lambda_2-1,n-2|\bullet\rangle^{{\rm ch}(\olambda)}.\]
If $\delta=f(\langle 2n-4,\lambda_2-1|\bullet\rangle)=\gamma^*$ then $N'(\gamma,\delta)=0$ by Lemmas~\ref{lemma:thresh} and \ref{lemma:specialPiericase}. Here $g(\gamma,\delta)=n-2$, so $h(\tilde{\gamma},\tilde{\delta})=n-2+{\tt type}(\gamma)$. Then $\epsilon_{\gamma,\delta} = 1$ if $n$ is even and ${\tt type}(\gamma)=1$, or if $n$ is odd and ${\tt type}(\gamma)=2$, while $\epsilon_{\gamma,\delta} = 0$ otherwise. This agrees with the coefficient $\frac{1}{2}\eta_{\langle n-2,0|\circ\rangle^\uparrow,\olambda}$ of $\langle 2n-4,\lambda_2-1|\bullet\rangle$. 

The $F$-image $\tilde{\delta}=(\delta;0)$ of a term in the summation has $\delta_2\le n-2$ and $\delta_1>n-2$, so $\mathbb{D}=\mathbb{D}_1\neq\emptyset$. Then by Lemma~\ref{lemma:bisection} and Lemma~\ref{lemma:threshcptsurvives}, $N'(\gamma,\delta)=1$. Therefore $\epsilon_{\tilde{\gamma},\tilde{\delta}}=\frac{1}{2}$, and the coefficient of $\tilde{\delta}$ is $1$.
For $\delta=f(\langle n-2+\lambda_2-1,n-2|\bullet\rangle)$, since $\delta_2<n-2$ and $\delta_1>n-2$ we have $\mathbb{D}=\mathbb{D}_1\neq\emptyset$. Then by Lemma~\ref{lemma:bisection} and Lemma~\ref{lemma:threshcptsurvives}, $N'(\gamma,\delta)=1$. Therefore $\epsilon_{\tilde{\gamma},\tilde{\delta}}=\frac{1}{2}$, and the coefficient of $\tilde{\delta}$ is $1$. We have only $\tilde{\delta}=(\delta;{\tt type}(\gamma))$ appearing in $\sigma_{n-2}\cdot\sigma_{\tilde{\gamma}}$, since ${\tt type}(\gamma)+{\tt type}(\delta)\neq 3$. This agrees with the charge assignment ${\rm ch}(\olambda)$.

\section*{Acknowledgements}

The author would like to thank Dave Anderson and Edward Richmond for helpful conversations. Thanks are also due to my advisor Alexander Yong whose suggestions and guidance greatly improved the exposition of this paper. The author was partially supported by NSF grants DMS 0901331 and DMS 1201595, and a grant from the UIUC Campus Research Board (13080).

\end{document}